\newtheorem*{rep@theorem}{\rep@title}
\newcommand{\newreptheorem}[2]{
\newenvironment{rep#1}[1]{
 \def\rep@title{#2 \ref{##1}}
 \begin{rep@theorem}}
 {\end{rep@theorem}}}
\newtheorem{theorem}{Theorem}[section]
\newtheorem{lemma}[theorem]{Lemma}
\newtheorem{proposition}[theorem]{Proposition}
\newtheorem{corollary}[theorem]{Corollary}
\newtheorem*{theorem*}{Theorem}
\newtheorem*{proposition*}{Proposition}
\theoremstyle{remark}
\newtheorem{remark}[theorem]{Remark}
\newtheorem{construction}[theorem]{Construction}
\newtheorem{definition}[theorem]{Definition}
\newtheorem{example}[theorem]{Example}
\newtheorem{question}{Question}
\numberwithin{equation}{section}
\newcommand{\F}{\mathcal{F}}
\newcommand{\Z}{\mathbb{Z}}
\newcommand{\Q}{\mathbb{Q}}
\newcommand{\R}{\mathbb{R}}
\newcommand{\C}{\mathcal{C}}
\newcommand{\T}{\mathbb{T}}
\newcommand{\K}{\mathcal{K}}
\newcommand{\rank}{\operatorname{rank}}
\newcommand{\dual}{\operatorname{dual}}
\newcommand{\real}{\operatorname{re}}
\newcommand{\nquotient}[2]{\ensuremath{\frac{\pi_1(#1)}{\pi_1(#1)^{(#2)}}}}
\newcommand{\nmquotient}[3]{\ensuremath{\frac{\pi_1(#1)^{(#2)}}{\pi_1(#1)^{(#3)}}}}
\newcommand{\pquotient}[2]{\ensuremath{\frac{\pi_1(#1)}{\pi_1(#1)_p^{(2)}}}}
\newcommand{\onepquotient}[2]{\ensuremath{\frac{\pi_1(#1)^{(1)}}{\pi_1(#1)_p^{(2)}}}}\newcommand{\iterate}[3]{\ensuremath{\underset{#2}{\overset{#3}{#1}}}}
\newcommand{\bdry}{\ensuremath{\partial}}
\begin{document}

\title[$\rho^1$ as a torsion obstruction.]{Von Neumann rho invariants and torsion in the topological knot concordance group}
\author{Christopher William Davis}

\address{Department of Mathematics \\ Rice University}% \\ Houston, TX 77204-3008 \\ USA}
\email{cwd1@rice.edu}

\date{April 12, 2012}

\subjclass[2010]{57M25, 57M27, 57N70}  

\keywords{knot concordance, rho-invariants}

\begin{abstract}    % type your abstract below

We discuss an infinite class of metabelian Von Neumann $\rho$-invariants.  Each one is a homomorphism from the monoid of knots to $\R$.  In general they are not well defined on the concordance group.  Nonetheless, we show that they pass to well defined homomorphisms from the subgroup of the concordance group generated by anisotropic knots.  Thus, the computation of even one of these invariants can be used to conclude that a knot is of infinite order.  We introduce a method to give a computable bound on these $\rho$-invariants.  Finally we compute this bound to get a new and explicit infinite set of twist knots which is linearly independent in the concordance group and whose every member is of algebraic order 2.

\end{abstract}

\maketitle

%%%%%%%%%%%%%%%%%%%%   Start of main body of article

%%%%%%%%%%%%%%%%%%%%%%%%%%%%%%%%%%%%%%%%%%%%%%%
\section{Introduction}\label{introduction}
%%%%%%%%%%%%%%%%%%%%%%%%%%%%%%%%%%%%%%%%%%%%%%%

In this paper we  study a particular class of von Neumann $\rho$-invariants and show that they provide a concordance obstruction.  We show that these invariants are particularly computable for knots of finite algebraic order and use them to establish a new linearly independent family of twist knots.  

A knot $K$ is an isotopy class of oriented locally flat embeddings of the circle $S^1$ into the 3-sphere $S^3$.  A pair of knots, $K$ and $J$, are called \textbf{topologically concordant} if there is a locally flat embedding of the annulus $S^1\times[0,1]$ into $S^3\times[0,1]$ mapping $S^1\times\{1\}$ to a representative of $K$ in $S^3\times\{1\}$ and $S^1\times\{0\}$ to a representative of $J$ {with its orientation reversed} in $S^3\times\{0\}$.  A knot is called \textbf{slice} if it is concordant to the unknot or equivalently if it is the boundary of a locally flat embedding of the 2-ball $B^2$ into the 4-ball $B^4$.  

The set of all knots has the structure of a commutative monoid with identity given by the unknot under the binary operation of connected sum.  This monoid is not a group.  The only knot with an inverse is the unknot.  The quotient by the equivalence relation given by concordance, however, is a group. The inverse of any knot $K$ is given by $-K$, the reverse of the mirror image of $K$.  This group is called the \textbf{topological concordance group} and is denoted $\C$.

Given a knot $K$ the \textbf{Alexander module} of $K$, denoted $A_0(K)$, is defined as the rational first homology of the universal abelian cover of the complement of the knot in $S^3$ or equivalently of $M(K)$, where $M(K)$ denotes zero surgery on $K$.  In the language of twisted coefficients, $A_0(K) = H_1(M(K);\Q[t^{\pm1}])$ where $t$ is the generator of the regular first homology of $M(K)$.  There is a nonsingular $\Q[t^{\pm1}]$-bilinear form \begin{equation*}Bl:A_0(K)\times A_0(K)\to \Q(t)/\Q[t^{\pm1}]\end{equation*} called the \textbf{Blanchfield linking form}.  

For a submodule $P$ of $A_0(K)$, the \textbf{orthogonal complement} of $P$ with respect to this bilinear form, denoted $P^\perp$, is given by the set of elements of $A_0(K)$ which annihilate $P$.  That is, \begin{equation*}P^\perp = \{q\in A_0(K)|Bl(p,q)=0\text{ for all }p\in P\}.\end{equation*}  $P$ is called \textbf{isotropic} if $P\subseteq P^\perp$ and is called \textbf{Lagrangian} or \textbf{self annihilating} if $P=P^\perp$.  We call $K$ \textbf{anisotropic} if $A_0(K)$ has no nontrivial isotropic submodules.  On the opposite end of the spectrum, a knot is called \textbf{algebraically slice} if it has a Lagrangian submodule.

The quotient of $\C$ by algebraically slice knots is called the \textbf{algebraic concordance group}.  It is shown in \cite{L5} that this quotient is isomorphic to $\Z^\infty \oplus \Z_2^\infty \oplus \Z_4^\infty$.  In particular, this shows that the concordance group has infinite rank.  There is, however, much more to the concordance group. For example,  in \cite[Theorem 5.1]{CG1} Casson and Gordon define a family of invariants and use them to show that, of the algebraically slice twist knots, only the unknot and the $-2$ twist knot (the stevedore knot) are slice.  As a consequence of their work Jiang \cite{Ji1} shows that there is an infinite set of algebraically slice twist knots that are linearly independent in $\C$.  Since then, the so-called Casson-Gordon invariants have served as useful tools in the detection of non-slice knots.

\begin{figure}[b]
\setlength{\unitlength}{1pt}
\begin{picture}(100,100)
%\put(100,70){@}
\put(0,0){\includegraphics[width=.25\textwidth]{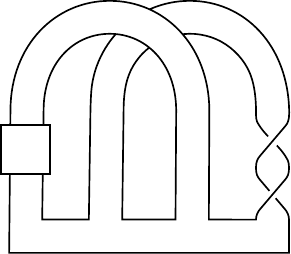}}
\put(5,30){$n$}
\end{picture}
\caption{the $n$-twist knot.}\label{fig:twist}
\end{figure}

Given any closed oriented 3-manifold $M$ and a homomorphism $\phi:\pi_1(M) \to \Gamma$, the Von Neumann $\rho$-invariant, $\rho(M, \phi)\in \R$, is defined.  It is an invariant of orientation preserving homeomorphism of the pair $(M,\phi)$.  Restricting this invariant to the zero surgery of knots and links gives rise to an isotopy invariant.

In \cite{whitneytowers}, Cochran, Orr and Teichner use $\rho$-invariants to show that there is an infinite rank subgroup of $\C$ of algebraically slice knots on which the Casson-Gordon invariants vanish.

In Section~\ref{easy}, we define the Von Neumann $\rho$-invariants in which we are interested.  Briefly, $\rho^0$ is the invariant associated with abelianization, $\rho^1$ with the quotient by the second term in the derived series and $\rho^1_p$ with $A_0^p(K)$, a localization of the Alexander module.

We proceed to give an additivity theorem for these invariants (Theorem~\ref{homomorphism}).  As a consequence, $\rho^1_p$ is a homomorphism from the monoid of knots to $\R$.  From the same theorem we deduce that there exist slice knots with non-vanishing $\rho^1_p$.  The existence of such knots implies that these invariants are not well defined on the concordance group.  In Section~\ref{rho as obstruction} we find that regardless of their ill-definedness there is a setting in which these $\rho$-invariants provide concordance information.  Theorem~\ref{big theorem 1} below is proven.  A Laurent polynomial $p(t)$ is called \textbf{symmetric} if $p(t)=t^k p(t^{-1})$ for some integer $k$.  
  
  \begin{reptheorem}{big theorem 1}
  If $p$ is a symmetric prime polynomial, $m_1,\dots, m_n\in \Z$, $K_1, \dots K_n$ are knots, $A_0^p(K_i)$ has no isotropic submodules for each $i$ and $\iterate{\#}{i=1}{n}m_iK_i$ is slice then $\displaystyle \sum_{i=1}^{n}m_i\rho^1_p(K_i)=0$.  
  \end{reptheorem}
  
  Restricting that theorem to the case that $n=1$ we get an application to the obstruction of torsion in the concordance group.
  
  \begin{repcorollary}{torsion corollary 1}
  Let $p$ be a symmetric polynomial and $K$ be a knot with $A_0^p(K)$ having no isotropic submodules.  If $K$ is of finite order in the concordance group then $\rho^1_p(K)=0$.  
  \end{repcorollary}
  
  In fact, Theorem~\ref{big theorem 1} implies that for each symmetric polynomial $p$ the $\rho^1_p$-invariant passes to a homomorphism from the subgroup of the concordance group generated by knots for which $A_0^p$ has no isotropic submodules.  For each $p$ this subgroup contains all knots with prime Alexander polynomials.  Thus, these invariants each give obstructions to linear dependence amongst these knots.

While Theorem~\ref{big theorem 1} can be compared to the results of \cite{derivatives}, it has the advantage that a knot can potentially be shown to be of infinite order after a single computation, while the results of \cite{derivatives} (for example Theorem 4.2 of that paper) only conclude that a knot is not slice, saying nothing about its concordance order and even then a $\rho$-invariant must be computed for every isotropic submodule, of which there may be many.

  %REFEREE TAKES ISSUE WITH THIS PARAGRaPH
%  \textcolor{red}{
%  For the same reason this theorem compares favorably to Casson-Gordon invariants.   In order to use those invariants to obstruct a knot's being slice one must show that the value corresponding to every metabolizer of a linking pairing is non-zero.  Furthermore, since the Casson-Gordon invariants obstruct a knot's being slice but not torsion, if one wishes to conclude that a knot $K$ is not torsion in the concordance group using Casson-Gordon invariants, one must show that these obstructions do not vanish for $\underset{n}{\#}K$  for every $n$.  This difficulty becomes apparent in \cite{stab4tsn} in which Casson-Gordon invariants are used to determine that many knots of algebraic order four are not topologically of order four.  The results of that paper do not rule out the knots being of some order greater than 4
%  }

An unfortunate fact about these invariants, $\rho$-invariants in general and Casson-Gordon invariants is that it is difficult to do significant computations involving them.  Section~\ref{computational tools} addresses this shortcoming for our invariants by proving Theorem~\ref{premain} which relates the $\rho^1_p$-invariant of a knot of finite algebraic order with the $\rho^0$-invariant of any link representing a metabolizer of a Seifert form.

\begin{reptheorem}{premain}
Let $p$ be a symmetric prime polynomial.  Let $K$ be a knot of finite algebraic order $n>1$ with $A_0^p(K)$ having no isotropic submodules.  Let $\Sigma$ be a genus $g$ Seifert surface for $nK=\underset{n}{\#}K$.  Let $L$ be a link of $g$ curves on $\Sigma$ representing a metabolizer for the Seifert form.  Let $P$ be the submodule of $A_0^p(nK)$ generated by $L$.  Suppose that meridians about the bands on which the components of $L$ sit form a $\Z$-linearly independent set in ${A^{p}_0\left(nK\right)}/{P}$.   %Let $\eta(L)$ be the rank of the Alexander module of $L$.

Then, $ \left| n\rho^1_p(K)-\rho^0(L)\right| \le g-1.$%-\eta(L)$.
\end{reptheorem}

This is good news because $\rho^0$ is computable.  In many cases it can be expressed in terms of the integral of a simple function on $\T^n$, the $n$-dimensional torus.  The application of Theorem~\ref{big theorem 1} together with Theorem~\ref{premain} gives a new and tractable strategy to show that knots which are of finite algebraic concordance order are not of finite topological concordance order.  By finding a metabolizing link $L$ for $K$ and computing $\rho^0(L)$ one can hope to conclude that $K$ is of infinite concordance order.
 
This strategy is employed in Section~\ref{twist} to give a new infinite linearly independent set of twist knots whose every member is of order 2 in the algebraic concordance group.  Specifically, the theorem below is proven.  This appears to be the first application of von Neumann $\rho$-invariants to the twist knots of finite algebraic order.

\begin{reptheorem}{twist theorem}
For $x$ an integer, let $n(x) = -x^2-x-1$.  For $x\ge2$, the set containing the $n(x)$-twist knots, $\{T_{-7}, T_{-13},T_{-21} \dots\}$, is linearly independent in $\C$.  
\end{reptheorem}

%:A change made in the next paragraph.  
%Kim used results of Gilmer, not of Friedl.  Friedl discovered and some errors in Gilmer's work.

In related work, a similar set, neither containing nor contained by the one presented here, is given by Tamulis \cite[corollary 1.2]{Tamulis}.  In \cite{knotconcordanceandtorsion}, Livingston and Naik find that twist knots which are of algebraic order 4 are of infinite concordance order.  In \cite{LiN1} they find many large families of algebraic order four twist knots are linearly independent.  In \cite{polynomialSplittingOfCG} Kim uses results of Gilmer to prove that except for the unknot, the $-1$-twist knot and the $-2$-twist knot, no nontrivial linear combination of twist knots is ribbon.  {In \cite[Corollary 1.3]{Lis1}, Lisca establishes the smooth concordance order of the twist knots (and of two-bridge knots in general). In the topological setting the concordance order of the twist knots is in general unknown, to say nothing of their linear independence.}

\section{Background information: basic properties of $\rho$ and $\sigma^{(2)}$}

For the definition of the Von Neumann rho invariant see for instance \cite[equation 2.10, definition 2.11]{CT} and \cite[section 3]{Ha2}.  For the definition of the $L^2$ signature see for example \cite[section 3.4, definition 3.21]{L2sign}.  Instead of presenting definitions, we give the properties needed for our analysis.

The first property we essentially employ as the definition of the $\rho$-invariant.  We even label it as such.  The pair $(W,\Lambda)$ is referred to in \cite{Ha2} as a stable null-bordism for $\{(M_i,\Gamma_i)\}_{i=1}^n$ where it is proven that such a definition is independent of the stable null-bordism used.

\begin{definition}\label{rho}
Consider oriented 3-manifolds $M_1, \dots, M_n$, with homomorphisms $\phi_i:\pi_1(M_i)\to \Gamma_i$.  Suppose that $M_1\sqcup M_2\sqcup\dots\sqcup M_n$ is the oriented boundary of a compact 4-manifold $W$ and $\psi:\pi_1(W)\to \Lambda$ is a homomorphism such that, for each $i$, there is a monomorphism $\alpha_i:\Gamma_i\to \Lambda$ making the following diagram commute:
\begin{center}$
 \begin{diagram}
\node{\pi_1(M_i)} \arrow{e,t}{\phi_i}
         \arrow{s,r}{i_*}
	     \node{\Gamma_i} \arrow{s,r,J}{\alpha_i}\\
       \node{\pi_1(W)} \arrow{e,t}{\psi}
     \node{\Lambda} 
      \end{diagram}$
      \end{center}

Then $\displaystyle\sum_{i=1}^n\rho(M_i,\phi_i) = \sigma^{(2)}(W,\psi) - \sigma(W)$ where $\sigma$ is the regular signature of $W$ and $\sigma^{(2)}$ is the $L^2$ signature of $W$ twisted by the coefficient system $\psi$.
\end{definition}

The primary tool in this paper for getting information about the $L^2$ signature of a 4-manifold is a bound in terms of the rank of twisted second homology.  When $\Gamma$ is PTFA (Poly-Torsion-Free-Abelian, see \cite[Definition 2.1]{whitneytowers}) and more generally whenever $\Q[\Gamma]$ is an Ore domain,
\begin{equation}\label{inequality}
\left|\sigma^{(2)}(W,\phi)\right| \le \rank_{\Q[\Gamma]}\left(\dfrac{H_2\left(W;\Q[\Gamma]\right)}{i_*\left[H_2\left(\bdry W;\Q[\Gamma]\right)\right]}\right),
\end{equation}
where $i_*:H_2\left(\bdry W;\Q[\Gamma]\right)\to H_2\left(W;\Q[\Gamma]\right)$ is the inclusion induced map.  

This follows from the monotonicity of von Neumann dimension (see \cite[Lemma 1.4]{L2invts}) and the fact that $L^2$ Betti number agrees with $\Q[\Gamma]$ rank when $\Q[\Gamma]$ is an Ore Domain. (see \cite [Lemma 2.4] {Cha3} or \cite[Proposition 2.4]{FrLM}).

%%%%%%%%%%%%%%%%%%%%%%%%%%%%%%%%%%%%%%%%%%%%%%
\section{The invariants of interest and some easy results}\label{easy}
%%%%%%%%%%%%%%%%%%%%%%%%%%%%%%%%%%%%%%%%%%%%%%

In this section we define the invariants studied in this paper.  Each is the Von Neumann $\rho$-invariant with respect to some abelian or metabelian quotient of the fundamental group of zero surgery on a knot or (in the case of $\rho^0$) a link.

\begin{definition}\label{rho0}
For a link $L$ of $n$ components with zero linking numbers, let $\phi^0:\pi_1(M(L))\to \Z^n$ be the abelianization map.  Let $\rho^0(L) = \rho(M(L), \phi^0)$ be the corresponding $\rho$-invariant.
\end{definition}

For a knot $K$, $\rho^0(K)$ is equal to the integral of the Levine-Tristram signature function (see \cite[Proposition 5.1]{structureInConcordance}).  In particular, this invariant is computable but is zero for every knot of finite order in the algebraic concordance group.  Despite this, $\rho^0$ can be used to get concordance information about algebraically slice knots by studying $\rho^0(L)$ where $L$ is a link given by a metabolizer of the knot.  In \cite[4.3]{Collins} and \cite[example 5.10]{derivatives}, this strategy is used to give an alternate proof of the result of Casson-Gordon \cite[Theorem 5.1]{CG1} that the algebraically slice twist knots are not slice.  In this paper the $\rho^0$-invariant of a metabolizing link is used to get information about the invariants given in the next two definitions.

\begin{definition}\label{rho1}
For a knot $K$ let $\phi^1:\pi_1(M(K))\to \nquotient{M(K)}{2}$ be the projection map.  Let $\rho^1(K) = \rho\left(M(K),\phi^1 \right)$ be the corresponding $\rho$-invariant.
\end{definition}

In order to give the definition of the third invariant we must first provide some definitions involving localizations of the Alexander module of a knot.  For $p(t)\in \Q[t^{\pm1}]$ let \begin{equation}R_p = \left.\left\{\frac{f}{g}\right|(g,p)=1\right\}\end{equation} be the \textbf{localization of ${\Q[t^{\pm1}]}$ at ${p}$}.  For a knot, $K$, let $A_0^p(K) = A_0(K)\otimes R_p$ be the \textbf{localization of the Alexander module of ${K}$ at ${p}$.}  (The usual notation for localization would call this the localization at the multiplicative set of polynomials relatively prime to $p$.)

Throughout this paper we need to be flexible with notation.  For any CW complex $X$ with $H_1(X)=\langle t\rangle\cong \Z$ we define the Alexander module of $X$, $A_0(X)$, as the homology of the universal abelian cover of $X$.  Similarly, we define the localized Alexander modules of $X$, $A_0^p(X)=A_0(X){\otimes} R_p$.  In this paper, such an $X$, if not zero surgery on a knot, is generally a 4-manifold cobordism between zero surgeries on knots.

\begin{definition}\label{rho1p}
For a polynomial $p$ and a knot $K$ let $\pi_1(M(K))^{(2)}_p$ be the kernel of the composition
\begin{equation*}
\pi_1(M(K))^{(1)}\to\nmquotient{M(K)}{1}{2}\hookrightarrow A_0(K)\to A_0^p(K).
\end{equation*}
 Let $\phi^1_p:\pi_1(M(K))\to\pquotient{M(K)}{p}$ be the projection map and $\rho^1_p(K)=\rho\left(M(K), \phi^1_p \right)$ be the $\rho$-invariant associated to this homomorphism.
\end{definition}

These $\rho$-invariants are similar to the first order $\rho$-invariants defined and employed in \cite{derivatives}.  One could view our definition of $\rho^1$ as the restriction of their invariants to the case of anisotropic Alexander modules (a setting not considered in that paper). The $\rho^1_p$-invariant does not appear to have been previously considered.

The following theorem describes interactions between these invariants.  It can be thought of as suggesting that the $\rho^1_p$-invariant picks up information sitting in the $p$-torsion part of the Alexander module of the knot.  

\begin{proposition}\label{rho prime}
\begin{enumerate}
Let $\Delta$ be the Alexander polynomial of a knot $K$.
\item
If $p$ is a polynomial relatively prime to $\Delta$ then $\rho^1_p(K) = \rho^0(K)$.  
\item
If $\Delta = p$ then $\rho^1_p(K)=\rho^1(K)$. 
%\item
%If $q$ divides $p$, then $\rho^1_{qp}(K) = \rho^1_p(K)$ 
%\item
%If the greatest common divisor of $p$ and $\Delta$ is $h$, then $\rho^1_p(K) = \rho^1_h(K)$
\end{enumerate}
\end{proposition}
\begin{proof}
When $p$ is relatively prime to the Alexander polynomial of $K$ then $\Delta\in S_p$ is invertible in $R_p$.  Since $\Delta$ annihilates $A_0(K)$, this implies that $A_0^p(K)=A_0\otimes Q[t^{\pm1}]S_p^{-1}$ is the trivial module so the kernel of 
\begin{equation*}
\pi_1(M(K))^{(1)}\to\nmquotient{M(K)}{1}{2}\hookrightarrow A_0(K)\to A_0^p(K)=0
\end{equation*}
is equal to $\pi_1(M(K))^{(1)}$.  Thus, the map $\phi^1_p$ of definition~\ref{rho1p} is the same as the map $\phi^0$ of definition~\ref{rho0} (the abelianization map).  This completes the proof of the first claim.

When $p$ is equal to the Alexander polynomial of $K$ the map $A_0(K)\to A_0^p(K)$ is injective.  Thus, the kernel of
\begin{equation*}
\pi_1(M(K))^{(1)}\to\nmquotient{M(K)}{1}{2}\hookrightarrow A_0(K)\hookrightarrow A_0^p(K)
\end{equation*}
is equal to $\pi_1(M(K))^{(2)}$ and $\phi^1_p$ is exactly the map $\phi^1$ of definition~\ref{rho1}.  This completes the proof of the second part.

%The third follows from the fact that $R_p$ depends only on the set of prime factors of $p$ and not on their multiplicity.
%
%Let $x$ be in kernel of the map $A_0(K)\to A_0^p(K)$.  Then, $q x = 0$ for some polynomial $q$ relatively prime to $p$.   Since $\Delta$ is the characteristic polynomial of $A_0$, we may assume that $q$ divides $\Delta$ by replacing $q$ by $(\Delta,q)$ if necessary.  Thus, $q$ must divide $\Delta/h$.  Conversely, if $qx=0$ for some $q$ dividing $\Delta/h$, then $x$ is in $\ker(A_0(K)\to A_0^p(K))$.  
%
%Thus, $\ker(A_0(K)\to A_0^p(K)) = \ker(A_0(K)\to A_0^h(K))$, and the proof is complete.
\end{proof}

 Throughout this paper we make use of a pair of additivity properties. The second is a localized version of \cite[Theorem 11.1 parts 4, 5]{C}.  The knot $J_\eta(K)$ is the result of infection.  For an overview of infection, see \cite[section 8]{C}.

\begin{proposition}\label{homomorphism}
Let $J$ and $K$ be knots and $\eta$ be an unknot in the complement of $J$ such that the linking number between $J$ and $\eta$ is zero.  Let $p$ be a polynomial.

\begin{enumerate}
\item $\rho^1_p(J\# K) = \rho^1_p(J) + \rho^1_p(K)$
\item $\displaystyle \rho^1_p(J_\eta(K)) = \left\{ 
\begin{array}{ccc}
\rho^1_p(J) & \text{if} & \eta=0 \text{ in } A_0^p(J)\\
\rho^1_p(J)+\rho^0(K) & \text{if} & \eta\neq0 \text{ in } A_0^p(J)\\
\end{array}
\right.$
\end{enumerate}
\end{proposition}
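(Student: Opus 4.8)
The plan is to prove both additivity statements from the cobordism-theoretic characterization of the $\rho$-invariant in Definition~\ref{rho}, combined with the $L^2$-signature bound (\ref{inequality}). In each case I would exhibit a standard $4$-manifold cobordism $W$ whose oriented boundary is the disjoint union of the zero-surgeries appearing in the formula, equip $\pi_1(W)$ with a $p$-localized metabelian coefficient system $\psi:\pi_1(W)\to\Lambda$ where $\Lambda=\pi_1(W)/\pi_1(W)^{(2)}_p$ is defined exactly as in Definition~\ref{rho1p} using the localized Alexander module $A_0^p(W)$, and then verify the hypotheses of Definition~\ref{rho}: that the restriction of $\psi$ to each boundary component recovers the relevant $\phi^1_p$ or $\phi^0$ up to a monomorphism $\alpha_i:\Gamma_i\hookrightarrow\Lambda$. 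Since $A_0^p$ is a finite-dimensional $\Q$-vector space, each $\Lambda$ is an extension of $\Z$ by a torsion-free abelian group, hence PTFA, so (\ref{inequality}) applies.

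For part (1) I would use the standard connected-sum cobordism $C$ with $\partial C = M(J)\sqcup M(K)\sqcup -M(J\#K)$, obtained from $(M(J)\sqcup M(K))\times[0,1]$ by attaching a $1$-handle joining the two meridians and a single $2$-handle realizing $\lambda_{J\#K}=\lambda_J+\lambda_K$. The decisive algebraic input is the splitting $A_0^p(J\#K)\cong A_0^p(J)\oplus A_0^p(K)$ together with the fact that all three meridians map to the same generator of $H_1(C)\cong\Z$; from this one constructs $\Lambda$ and checks that $\Gamma_J,\Gamma_K,\Gamma_{J\#K}$ inject. Applying Definition~\ref{rho} then gives $\rho^1_p(J)+\rho^1_p(K)-\rho^1_p(J\#K)=\sigma^{(2)}(C,\psi)-\sigma(C)$, and it remains to show the right side vanishes. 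The ordinary signature $\sigma(C)$ is zero since the rational intersection form of $C$ is trivial, and $\sigma^{(2)}(C,\psi)=0$ follows from (\ref{inequality}) once one computes, by a Mayer--Vietoris argument over $\Q[\Lambda]$, that $H_2(C;\Q[\Lambda])$ is carried by the image of $H_2(\partial C;\Q[\Lambda])$, so the relevant rank is $0$.

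For part (2) I would follow the proof of \cite[Theorem 11.1]{C}, localized at $p$. The infection cobordism $E$, with $\partial E=M(J)\sqcup M(K)\sqcup -M(J_\eta(K))$, is built from $M(J)\times[0,1]$ by gluing in $M(K)\times[0,1]$ along a solid-torus neighborhood of $\eta$, so that $\mu_K$ is identified with $\eta$. The signature-vanishing argument of the previous paragraph again reduces the formula to $\rho^1_p(J_\eta(K))=\rho(M(J),\psi|_{M(J)})+\rho(M(K),\psi|_{M(K)})$. Since $\eta$ has zero linking number with $J$ it lies in $\pi_1(M(J))^{(1)}$, so its image $[\eta]$ lands in the abelian subgroup $\Lambda^{(1)}$; consequently $\psi|_{M(K)}$ kills $\pi_1(M(K))^{(1)}$ and factors through the abelianization $\pi_1(M(K))\to\Z=\langle\mu_K\rangle$, $1\mapsto[\eta]$. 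The dichotomy is now transparent. If $\eta=0$ in $A_0^p(J)$ then $[\eta]=0$ in $\Lambda$, the coefficient system on $M(K)$ is trivial, its $\rho$-invariant vanishes, and $\rho^1_p(J_\eta(K))=\rho^1_p(J)$. If $\eta\neq0$ in $A_0^p(J)$ then, because $A_0^p(J)$ is torsion-free and injects into $A_0^p(E)$, the element $[\eta]$ has infinite order, the coefficient system on $M(K)$ is an injection of its abelianization, and the induction property of $\rho$-invariants identifies its contribution as $\rho(M(K),\phi^0)=\rho^0(K)$, giving $\rho^1_p(J)+\rho^0(K)$.

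The main obstacle in both parts is verifying that each boundary coefficient system embeds into $\Lambda$ and identifying the localized Alexander module $A_0^p(W)$ of the cobordism precisely enough that the twisted second homology is carried by the boundary, so that (\ref{inequality}) forces $\sigma^{(2)}=0$. For the connected sum this reduces to the splitting $A_0^p(J\#K)\cong A_0^p(J)\oplus A_0^p(K)$ and compatibility of meridians; for the infection it is exactly the localized analogue of Cochran's computation, carried out with the multiplicative set of polynomials prime to $p$ in place of all of $\Q[t^{\pm1}]\setminus\{0\}$. Tracking how localization at $p$ interacts with the image of $\eta$ is the delicate point, and is precisely what separates the two cases of part (2).
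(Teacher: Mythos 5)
Your proposal is correct and follows essentially the same route as the paper: the same connected-sum (resp.\ infection) cobordism, the same verification that the boundary coefficient systems inject into the $p$-localized metabelian quotient of $\pi_1(W)$, and the same vanishing of both signatures via the bound~(\ref{inequality}). Your treatment of part (2), including the dichotomy on whether $[\eta]$ survives in $A_0^p(J)$, fills in details the paper explicitly leaves to the reader, and matches what is intended.
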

\begin{proof}

The proof of part 1 proceeds by constructing a 4-manifold $W$ with $\bdry(W) = M(J)\sqcup M(K)\sqcup -M(J\#K)$ such that:
\begin{enumerate}
\item
 The map induced by the inclusion of each boundary component of $W$ on the first homology is an isomorphism.  We let $t$ be the generator of first homology of $W$ and of of every one of its boundary components.  %Recall that $R_p=\Q[t^{\pm1}]S_{p(t)}^{-1}$ is the localization of $\Q[t^{\pm1}]$ at $p(t)$.
 \item
  $H_1(W;R_p) \cong H_1(M(J);R_p)\oplus H_1(M(K);R_p)$ and the inclusion induced maps from $H_1(M(J);R_p)$ and $H_1(M(K);R_p)$ are the maps to the first and second factors of this direct sum.
   \item
  $H_1(M(J\#K);R_p) \cong H_1(W;R_p)$ and the inclusion induced map is an isomorphism.
  
  \item
  $H_2(W;\Z)$ is carried by $\bdry W$.
  \item
  $H_2(W;\K)=0$, where $\K$ is the classical field of fractions of the Ore domain $\Q\left[\pquotient{W}{p}\right]$.
  \end{enumerate}
  
  Supposing that such a $W$ is found, the next step is to show that the inclusion induced map $\pquotient{M(K)}{p}\to \pquotient{W}{p}$ is injective for each boundary component.  Since we reuse this idea with some frequency throughout this paper, we write it down as a lemma:
  
  \begin{lemma}\label{repeated use lemma}
  Let $p$ be a polynomial.  Suppose that $M(K)$ is a boundary component of a 4-manifold $W$, that the inclusion induced map $H_1(M(K))\to H_1(W)$ is an isomorphism and that the inclusion induced map $i_*:A_0^p(K)\to A_0^p(W)$ is injective.
  
  Then the inclusion induced map $\pquotient{M(K)}{p}\to \pquotient{W}{p}$ is injective.% and $$\sigma^{(2)}\left(W;\pquotient{W}{p}\right)=\rho^1_p(K)+\rho\text{-invariants from other boundary components}$$
  \end{lemma}
  \begin{proof}[Proof of Lemma~\ref{repeated use lemma}]
  The following diagram commutes and has exact rows
  \begin{equation}\label{lemma-diagram}
  \begin{diagram}\dgARROWLENGTH=1.5em 
  \node{0}\arrow{e}
  \node{\onepquotient{M(K)}{p}} \arrow{e}\arrow{s,r}{\beta}
     \node{\pquotient{M(K)}{p}} \arrow{e} \arrow{s,r}{i_*}
     \node{\nquotient{M(K)}{1}}\arrow{s,r}{\cong}\arrow{e}
     \node{0}\\
     \node{0}\arrow{e}
     \node{\onepquotient{W}{p}} \arrow{e}
     \node{\pquotient{W}{p}} \arrow{e}
     \node{\nquotient{W}{1}}\arrow{e}
     \node{0}
      \end{diagram}
      \end{equation}

  The vertical map on the right is exactly the inclusion induced map on first homology which is an isomorphism by assumption.  If $\beta$ is a monomorphism, then $i_*$ is a monomorphism.
  
  Consider the commutative diagram
  \begin{equation}\begin{diagram}
  \node{\onepquotient{M(K)}{p}} \arrow{e,J}\arrow{s,l}{\beta}
     \node{A_0^p(M)}\arrow{s,J}\\
     \node{\onepquotient{W}{p}} \arrow{e,J}
     \node{A_0^p(W)}
      \end{diagram}
      \end{equation}  
      The two horizontal maps are injective by the definition of $\pi_1(M)^{(2)}_p$, while the rightmost vertical map is injective by assumption.  Thus, $\beta$ is injective and the proof is complete.

  \end{proof}
  
%  consider, for each boundary component $M$ of $W$, the inclusion induced map, $i_*:\pquotient{M}{p} \to \pquotient{W}{p}$.  The following diagram commutes: 
%  \begin{equation}\begin{diagram}
%  \node{0}\arrow{e}
%  \node{\onepquotient{M}{p}} \arrow{e}\arrow{s,l}{}
%     \node{\pquotient{M}{p}} \arrow{e} \arrow{s,r}{i_*}
%     \node{\nquotient{M}{1}}\arrow{s,}\arrow{e}
%     \node{0}\\
%     \node{0}\arrow{e}
%     \node{\onepquotient{W}{p}} \arrow{e}
%     \node{\pquotient{W}{p}} \arrow{e}
%     \node{\nquotient{W}{1}}\arrow{e}
%     \node{0}
%      \end{diagram}
%      \end{equation}
%  and has short exact sequences as its rows.  The vertical map on the right is exactly the inclusion induced map on first homology which is an isomorphism by property 1.  If it can be shown that the leftmost map is a monomorphism, then $i_*$ must be a monomorphism.
%  
%  Now consider this commutative diagram
%  \begin{equation}\begin{diagram}
%  \node{\onepquotient{M}{p}} \arrow{e,J}\arrow{s,l}{i_*}
%     \node{A_0^p(M)}\arrow{s,J}\\
%     \node{\onepquotient{W}{p}} \arrow{e,J}
%     \node{A_0^p(W)}
%      \end{diagram}
%      \end{equation}  
%      The two horizontal maps are injective by the definition of $\pi_1(M)^{(2)}_p$, while the rightmost map is injective by property 2 or 3, depending on which boundary component is being considered.  Thus, the remaining map must be injective, so the map from$\pquotient{M}{p}$ to $\pquotient{W}{p}$ must be a monomorphism and  
      
      Applying Lemma~\ref{repeated use lemma} for each boundary component and using properties (1), (2) and (3), the conditions of definition~\ref{rho} are satisfied, so \begin{equation}\rho^1_p(M(K\#J))-\rho^1_p(K)-\rho^1_p(J) = \sigma^{(2)}\left(W;\phi\right)-\sigma(W),\end{equation}
where $\phi:\pi_1(W)\to\pquotient{W}{p}$ is the quotient map.
  By property (4), $\sigma(W)=0$.  By property (5) and inequality~\eqref{inequality}, $\sigma^{(2)}\left(W;\phi\right)=0$.  It remains only to construct such a $W$.
  
    Construct $W$ by taking $$M(J) \times [0,1]\sqcup M(K)\times[0,1]$$ and connecting it by gluing together neighborhoods of curves in $M(J)\times\{1\}$ and $M(K)\times\{1\}$ representing the meridians of $J$ and $K$ respectively.  This could equally well be described via the addition of a 1-handle and a 2-handle to $M(J) \times [0,1]\sqcup M(K)\times[0,1]$.
    
%  Let $W$ be given by taking $$M(J) \times [0,1]\sqcup M(K)\times [0,1] \sqcup S^1\times B^2\times [0,1]$$ and connecting it by gluing $S^1\times B^2 \times\{0\}$ and $S^1\times B^2 \times\{1\}$ to neighborhoods of curves in $M(J)\times\{1\}$ and $M(K)\times\{1\}$ representing the meridians of $J$ and $K$ respectively.  This could equally well be described via the addition of a 1-handle and a 2-handle to $M(J) \times [0,1]\sqcup M(K)\times[0,1]$.
  
%\begin{figure}[h]
%\setlength{\unitlength}{1pt}
%\begin{picture}(180,100)
%\put(0,0){\includegraphics[width=.5\textwidth]{cobordism.pdf}}
%\put(11,26){$M(A)$}
%\put(138,26){$M(B)$}
%\put(65,50){$M(A\#B)$}
%\put(53,27){$S^1\times B^2\times[0,1]$}
%\end{picture}
%\caption{a schematic diagram representing $W$.}\label{fig:connected sum}
%\end{figure}
  
 It can be seen that $\bdry W=M(J)\sqcup M(K) \sqcup -M(K\#J)$ by thinking of connected sum as infection along a meridian.  We now provide the proof that $W$ has properties (1) through (5).    
  
Consider the last 4 terms of the long exact sequence of the pair $(W,M(J)\sqcup M(K))$:
\begin{equation}\label{end of sequence} 
H_2(W,M(J)\sqcup M(K))\to H_1(M(J)\sqcup M(K))\to H_1(W)\to 0.
\end{equation}
Letting $e$ be the relative second homology class given by core of the added $S^1\times B^2\times [0,1]$ and $\mu_J$, $\mu_K$ be the meridians of $K$ and $J$ respectively, this exact sequence becomes
\begin{equation}\label{first few terms}
\begin{diagram}
\node{\langle e\rangle}\arrow{e,b}{e\mapsto \mu_J-\mu_K}\node{\langle\mu_J, \mu_k\rangle}\arrow{e}\node{ H_1(W)}\arrow{e}\node{ 0}
\end{diagram}
\end{equation}
Thus, $H_1(W)=\Z$ is generated by either $\mu_J$ or $\mu_K$, so the inclusion from $M(K)$ and $M(J)$ are both isomorphisms on first homology.  The meridian of $M(K\#J)$ is isotopic in $W$ to $\mu_J$ so the map from $H_1(M(K\#J))$ to $H_1(W)$ is also an isomorphism.  This proves claim (1).  

Consider at the terms previous to (\ref{end of sequence}) in the long exact sequence:
\begin{equation}\label{next few terms}
\begin{array}{c}H_2(M(J)\sqcup M(K))\overset{i_*}{\to} H_2(W)\overset{p_*}{\to} H_2(W,M(J)\sqcup M(K))\\\overset{\bdry_*}{\to} H_1(M(J)\sqcup M(K))\end{array}
\end{equation}
We saw in (\ref{first few terms}) that $\bdry_*$ in (\ref{next few terms}) is injective, so $p_*$ is the zero map and \begin{equation}H_2(M(J)\sqcup M(K))\overset{i_*}{\to} H_2(W)\end{equation} is an epimorphism, proving (4).

Consider the decomposition of $W$ as $W=M(K)\times [0,1]\cup M(J)\times [0,1]$.  The intersection $M(K)\cap M(J)=S^1\times B^2$ is a neighborhood of the meridian of $J$ and $K$.  The Mayer Vietoris sequence corresponding to this decomposition with coefficients in $R_p$ is 
\begin{equation}\label{MVS}
\begin{array}{c}
H_n(S^1\times B^2;R_p)\overset{i_*}{\to} H_n(M(K);R_p)\oplus H_n(M(J);R_p) \overset{j_*}{\to} \\H_n(W;R_p)\overset{\bdry_*}{\to} H_{n-1}(S^1\times B^2;R_p).
\end{array}
\end{equation}
The cover of $S^1\times B^2$ corresponding to this coefficient system is given by $\R\times B^2$.  This is contractible so $H_n(S^1\times B^2;R_p)=0$.% for $n\neq 0$.  For $n=0$ the (\ref{MVS}) becomes
%\begin{equation}\label{end of MVS}
%\Z\overset{i_*}{\to} \Z \oplus \Z \overset{j_*}{\to} \Z \overset{\bdry_*}{\to} 0.
%\end{equation}
%in particular, $i_*$ is injective so $\bdry_*: H_1(W;R_p)\to H_{0}(S^1\times B^2;R_p)$
%is the zero map.

Thus, for each $n$, the map 
\begin{equation}\label{isom}
H_n(M(K);R_p)\oplus H_n(M(J);R_p) \to H_n(W;R_p)
\end{equation}
is an isomorphism.  Setting $n=1$ proves (2).

The isomorphism exhibited in (\ref{isom}) 
%(When $n=0$ it is still an epimorphism) 
implies that $H_n(W,M(K)\sqcup M(J);R_p)=0$ for all $n$.  By Poincar\'e Duality, then $H^n(W,M(K\#J);R_p)=0$ for all $n$.  By the universal coefficient theorem \cite[Theorem 2.36]{DaKi} $H_n(W,M(K\#J);R_p)=0$.  An examination of the the long exact sequence of the pair $(W,M(K\#J))$ reveals that the inclusion induced map from $H_n(M(K\#J);R_p)$ to $H_n(W;R_p)$ is an isomorphism for all $n$.  Taking $n=1$ proves (3).

Now consider the Mayer Vietoris sequence in \eqref{MVS} with coefficients in $\K$ instead of $R_p$.  
\begin{equation}\label{MVSK}
\begin{array}{c}
H_n(S^1\times B^2;\K)\to H_n(M(K);\K)\oplus H_n(M(J);\K)\to\\ H_n(W;\K)\to H_{n-1}(S^1\times B^2;\K).
\end{array}
\end{equation}
By \cite[Corollary 3.12]{C} $H_n(S^1\times B^2;\K)$, $H_n(M(K);\K)$ and $H_n(M(J);\K)$ vanish for all $n$, implying that $H_n(W;\K)=0$, which together with the inequality~(\ref{bound}) proves (5) and completes the proof of part 1 of the theorem.

  The second part of the theorem can be proved in an analogous manner.  We give a description of $W$ and allow the interested reader to work out details.  
  
    Let $W$ be given by taking $M(J) \times [0,1]\sqcup M(K)\times [0,1] $ and gluing  a neighborhood of $\eta$ in $M(J)\times\{1\}$ to a neighborhood of the meridian if $M(K)\times \{1\}$.  The boundary of $W$ is given by $M(J) \sqcup M(K) \sqcup -M\left(J_\eta(K)\right)$.
    
    Similar properties are desired.  Specifically:
    \begin{enumerate}
    \item[(1')] The inclusion induced maps from $H_1(M(J))$ and $H_1(M(J_\eta(K)))$ to $H_1(W)$ are both isomorphisms.  The inclusion induced map from $H_1(M(J))$ to $H_1(W)$ is the zero map.  Let $t$ be the generator of $H_1(W)$.
    \item[(2')] The inclusion induced maps from $H_1(M(J);R_p)$ and $H_1(M(J_\eta(K));R_p)$ to $H_1(W;R_p)$ are both isomorphisms.  
    \item[(3')]
    The composition 
    \begin{equation*}
    H_1(M(J))\to H_1(M(J))\underset{\Z}{\otimes} R_p = H_1(M(J);R_p) \to H_1(W;R_p)
    \end{equation*} is the zero map if $\eta$ is zero and otherwise is injective.
  \item[(4)]
  $H_2(W;\Q)$ is carried by $\bdry W$.
  \item[(5)]
  $H_2(W;\K)=0$, where $\K$ is the classical field of fractions of the Ore domain $\Q\left[\pquotient{W}{p}\right]$.
    \end{enumerate} 
  \end{proof}
  
Notice that the first part of the Theorem~\ref{homomorphism} states that each of these metabelian $\rho$-invariants is a homomorphism from the monoid of knots to $\R$.  One might hope that they pass to homomorphisms on the knot concordance group.  This is not the case.  Consider the pair of slice knots depicted in Figure~\ref{fig:counterexample}.  If $\rho^0(K)\neq 0$, for example if $K$ is a trefoil knot, then $\rho^1(R)$ and $\rho^1(R_\eta(K))$ cannot both be zero by the second part of Proposition~\ref{homomorphism}. 

\begin{figure}[h]
\setlength{\unitlength}{1pt}
\begin{picture}(220,100)
%\put(220,100){@}
\put(00,0){\includegraphics[width=.25\textwidth]{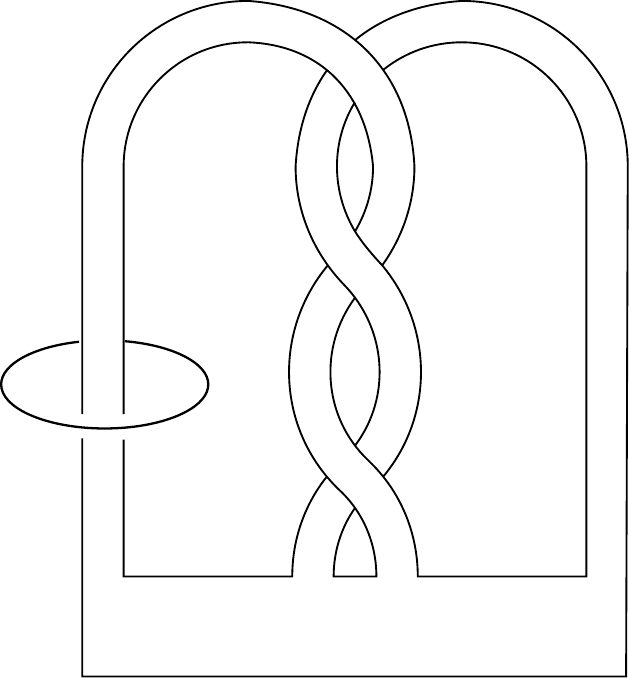}}
\put(130,0){\includegraphics[width=.25\textwidth]{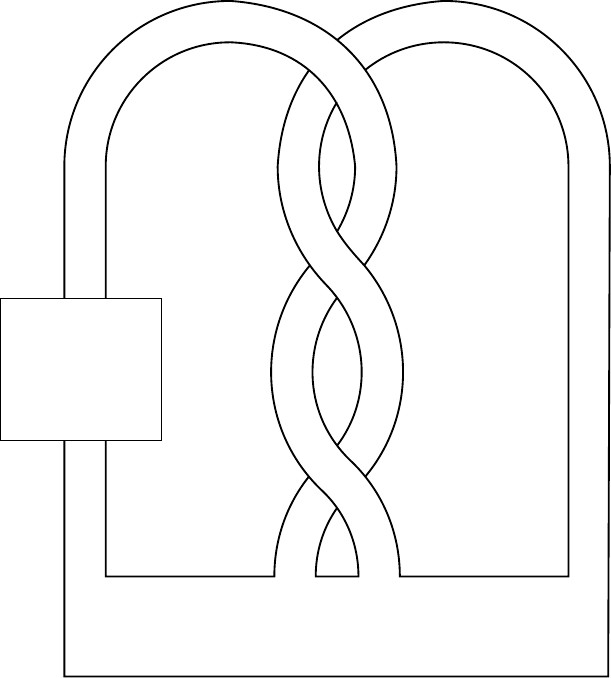}}
\put(30,50){$\eta$}
\put(137,45){$K$}
\put(48,4){$R$}
\put(160,5){$R_\eta(K)$}
\end{picture}
\caption{A pair of slice knots with differing $\rho^1$-invariant.}\label{fig:counterexample}
\end{figure}

This illustrates two obstacles to the use of $\rho^1_p$ as a concordance tool.  The first is that it is not well defined on the concordance group.  The next section, despite this fact, gets concordance information from these invariants.  The second obstacle is the well known difficulty of actually computing $L^2$ signatures.  As a consequence, in the example above we do not give a specific slice knot with non-zero $\rho^1$.  Rather we gave a pair, at least one of which has non-zero $\rho^1$.
%For examples of slice knots whose $\rho^1$-invariant is known see, \cite[example 4.6]{CHL4}.

  In Section~\ref{computational tools} we present a solution to the second obstacle by finding computable bounds on $\rho^1$.  We compute these bounds in Section~\ref{twist} for a family of twist knots.  

%%%%%%%%%%%%%%%%%%%%%%%%%%%%%%%%%%%%%%%%%%%%%%%%%%%%%%%%
\section{A context in which $\rho^1_p$ provides concordance information}\label{rho as obstruction}
%%%%%%%%%%%%%%%%%%%%%%%%%%%%%%%%%%%%%%%%%%%%%%%%%%%%%%%%

The ring $\Q[t^{\pm1}]$ has an involution defined by 
  \begin{equation}
  q(t)\mapsto\overline{q}(t)=q(t^{-1}).
  \end{equation}  
  When $p(t)$ is symmetric, this involution extends to the localization $R_p$.  For any knot $K$, the classical Blanchfield form $Bl$ (discussed in the introduction to this paper) extends to a sesquilinear form \begin{equation}Bl:A_0^p(K)\times A_0^p(K)\to\dfrac{\Q(t)}{R_p}.\end{equation}  
  
  Recall the definitions of isotropic and Lagrangian submodules of the Alexander module given in Section~\ref{introduction}.  The definitions of these concepts are identical in the localized Alexander module.  A knot is called $p$-anisotropic if its localized Alexander module, $A_0^p(K)$, has no nontrivial isotropic submodules.

%  For a submodule $P$ of $A_0^p(K)$, the orthogonal complement of $P$ with respect to the Blanchfield form, denoted $P^\perp$, is given by the set of all elements of $A_0^p(K)$ which annihilate $P$, that is,
%  $$P^\perp = \{q\in A_0^p(K)| Bl(r,q)=0\text{ for all }r\in P\}.$$
%  
%  A submodule $P$ is called \textbf{isotropic} if $P\subseteq P^\perp$ and is called \textbf{Lagrangian} or \textbf{self-annihilating} if $P=P^\perp$.   $K$ is called \textbf{$p$-anisotropic} if $A_0^p(K)$ has no nontrivial isotropic submodules.

The goal of this section is a theorem which allows the use of $\rho^1_p$ to obtain concordance information for knots whose every prime factor is $p$-anisotropic. One can think of  this restriction as allowing us to use the mindset in \cite{derivatives} without having to think about $\rho$-invariants corresponding  to the (possibly infinitely) many  isotropic submodules of the Alexander module.

\begin{theorem}\label{big theorem 1}
Given a symmetric polynomial $p$, integers $m_1,\dots m_n$ and $p$-anisotropic knots $K_1, \dots K_n$, if $m_1K_1\#\dots \# m_nK_n$ is slice then $\displaystyle \sum_{i=1}^n m_i\rho^1_{p}(K_i) = 0$
\end{theorem}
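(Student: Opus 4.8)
The plan is to exhibit $\sum_{i}m_i\rho^1_p(K_i)$ as the signature defect $\sigma^{(2)}(X,\psi)-\sigma(X)$ of a single $4$-manifold $X$ built from a slice disk, and then to show both signatures vanish; the $p$-anisotropy hypothesis will enter at exactly one point, to guarantee that the metabelian coefficient system on $X$ restricts to the honest map $\phi^1_p$ on each boundary piece. Write $J=m_1K_1\#\cdots\# m_nK_n$. Using the basic identity $\rho^1_p(-K)=-\rho^1_p(K)$ (from $M(-K)=-M(K)$ and $\sigma^{(2)}(-W)=-\sigma^{(2)}(W)$), I may read each $m_iM(K_i)$ as $|m_i|$ oriented copies of $M(K_i)$, so that the target sum carries the correct signs automatically.

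First I would assemble two cobordisms. Iterating the construction of Proposition~\ref{homomorphism}(1) produces a cobordism $W_0$ with $\partial W_0=\big(\bigsqcup_i m_iM(K_i)\big)\sqcup -M(J)$ whose inclusions induce injections $A_0^p(K_i)\hookrightarrow A_0^p(W_0)\cong A_0^p(J)\cong\bigoplus_i A_0^p(K_i)^{m_i}$, and which satisfies $\sigma(W_0)=0$ and $H_2(W_0;\K)=0$. Since $J$ is slice, let $W$ be the exterior of a slice disk, so $\partial W=M(J)$, $H_*(W;\Q)\cong H_*(S^1;\Q)$ (whence $\sigma(W)=0$), and $H_2(W;\K)=0$ by the standard computation for slice-disk exteriors. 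Poincar\'e--Lefschetz duality identifies $P:=\ker\big(A_0^p(J)\to A_0^p(W)\big)$ as a submodule that is self-annihilating for the localized Blanchfield form. Gluing along $M(J)$ gives $X=W_0\cup_{M(J)}W$ with $\partial X=\bigsqcup_i m_iM(K_i)$, and I equip $X$ with $\psi\colon\pi_1(X)\to\Lambda:=\pi_1(X)/\pi_1(X)^{(2)}_p$.

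The crux is to verify the monomorphism hypothesis of Definition~\ref{rho} for each boundary component, and this is the sole place the anisotropy is used. Identifying $A_0^p(K_i)$ with its image in $A_0^p(J)$ through $W_0$, an element of $A_0^p(K_i)$ dies in $A_0^p(X)$ only if it lies in $P$; hence the kernel of $A_0^p(K_i)\to A_0^p(X)$ is contained in $A_0^p(K_i)\cap P$, an isotropic submodule of the $p$-anisotropic module $A_0^p(K_i)$, and is therefore $0$. Thus $A_0^p(K_i)\to A_0^p(X)$ is injective, and Lemma~\ref{repeated use lemma} promotes this to injectivity of $\pi_1(M(K_i))/\pi_1(M(K_i))^{(2)}_p\hookrightarrow\Lambda$. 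Consequently the restriction of $\psi$ to $M(K_i)$ is $\phi^1_p$ composed with a monomorphism, so that $\rho(M(K_i),\psi|_{M(K_i)})=\rho^1_p(K_i)$; it does \emph{not} collapse onto a proper metabelian quotient, which is precisely the degeneration responsible for the slice examples of Figure~\ref{fig:counterexample}. Definition~\ref{rho} now reads $\sum_i m_i\rho^1_p(K_i)=\sigma^{(2)}(X,\psi)-\sigma(X)$.

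It then remains to kill the right-hand side. Novikov additivity together with $\sigma(W_0)=\sigma(W)=0$ gives $\sigma(X)=0$. For the $L^2$ term, a Mayer--Vietoris argument over the skew field of fractions $\K_\Lambda$ of $\Q[\Lambda]$, combining $H_2(W_0;\K)=H_2(W;\K)=0$ with the vanishing of $H_*(M(J);\K_\Lambda)$ as in \cite[Corollary 3.12]{C}, yields $H_2(X;\K_\Lambda)=0$; inequality~\eqref{inequality} then forces $\sigma^{(2)}(X,\psi)=0$, and the theorem follows. I expect the conceptual content to be the single anisotropy input $A_0^p(K_i)\cap P=0$; the main technical obstacle will be the coefficient-system bookkeeping, namely confirming that $\Lambda$ is a PTFA (hence Ore) group so that inequality~\eqref{inequality} applies and that the $\K_\Lambda$-homology of $X$ vanishes as claimed.
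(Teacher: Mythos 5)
Your proposal is correct and follows essentially the same route as the paper: the same glued $4$-manifold (the iterated connected-sum cobordism union a slice-disk exterior), the same Poincar\'e duality argument showing the kernel on localized Alexander modules is isotropic, the same single use of $p$-anisotropy to kill that kernel followed by Lemma~\ref{repeated use lemma}, and the same vanishing arguments for $\sigma$ and $\sigma^{(2)}$. The only cosmetic difference is that the paper first normalizes to $m_i=1$ by replacing $K_i$ with $-K_i$, and applies the duality/isotropy argument directly to the boundary of the glued manifold rather than routing it through the slice-disk exterior alone.
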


The proof is delayed to subsection~\ref{proof of big 1}.  We begin with an exploration of its implications as a means of detecting linear dependence in the concordance group.  For example, taking $m_1=\dots= m_n$, it yields the following torsion obstruction:
\begin{corollary}\label{torsion corollary 1}
For a symmetric polynomial $p$ and a knot $K$ which decomposes into a connected sum of $p$-anisotropic knots,  if $K$ is of finite order in the concordance group, then $\rho^1_p (K) = 0$
\end{corollary}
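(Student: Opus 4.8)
The plan is to read this off Theorem~\ref{big theorem 1} by taking all of the multiplicities equal to the order of $K$. I would start by writing $K = K_1 \# \cdots \# K_n$ with each $K_i$ a $p$-anisotropic knot, and by letting $d \ge 1$ denote the order of $K$ in $\C$, so that $dK$ is slice. Since connected sum is commutative and associative, I can regroup $dK$ as $(dK_1) \# \cdots \# (dK_n)$; that is, $dK$ is a slice connected sum of the form $m_1 K_1 \# \cdots \# m_n K_n$ in which every multiplicity $m_i$ equals $d$.

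I would then apply Theorem~\ref{big theorem 1} directly. Its hypotheses are met: $p$ is symmetric, each $K_i$ is $p$-anisotropic, and the connected sum just displayed is slice, so the theorem yields $\sum_{i=1}^n d\,\rho^1_p(K_i) = 0$. Because $d \neq 0$ and $\R$ is torsion-free, I can cancel the common factor $d$ to get $\sum_{i=1}^n \rho^1_p(K_i) = 0$, and part~(1) of Proposition~\ref{homomorphism} identifies this sum of the invariants of the factors with $\rho^1_p(K)$ itself. In the single-factor situation isolated in the introduction by restricting to the case $n=1$, where $K = K_i$ is itself $p$-anisotropic, the sum has exactly one term, and so this is precisely the asserted vanishing $\rho^1_p(K_i) = 0$.

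I do not expect a substantial obstacle here: all of the geometric content is already packaged inside Theorem~\ref{big theorem 1}, and the corollary is a formal consequence of it together with the additivity of Proposition~\ref{homomorphism}. The only points that need care are the bookkeeping that rewrites $dK$ as $m_1 K_1 \# \cdots \# m_n K_n$ with equal multiplicities so that Theorem~\ref{big theorem 1} applies, and the observation that cancelling $d$ is legitimate precisely because the target group $\R$ has no torsion. I would therefore expect the written proof to be only a few lines long, with Proposition~\ref{homomorphism} entering solely to express the conclusion as the vanishing of $\rho^1_p$ on each $p$-anisotropic factor to which the corollary is applied.
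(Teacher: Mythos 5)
Your proposal is correct and takes essentially the same route as the paper, which derives Corollary~\ref{torsion corollary 1} from Theorem~\ref{big theorem 1} in one line precisely by taking $m_1=\dots=m_n$ equal to the concordance order of $K$; your extra bookkeeping (regrouping $dK$, cancelling $d$ in the torsion-free group $\R$, and using Proposition~\ref{homomorphism} to identify $\sum_i\rho^1_p(K_i)$ with $\rho^1_p(K)$) just makes that remark explicit. Your reading of the conclusion is also the right one: the theorem only yields the vanishing of the sum (equivalently of $\rho^1_p(K)$, or of $\rho^1_p(K_1)$ when $n=1$ as in the introduction's statement), not of each $\rho^1_p(K_i)$ separately, so the corollary's displayed ``$\rho^1_p(K_i)=0$'' should be understood in exactly the way you interpret it.
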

 
 Since we are restricting our field of vision to $p$-anisotropic knots, it is worth noting how many knots are $p$-anisotropic.  The following theorem shows that there are many knots to which Theorem~\ref{big theorem 1} applies.   Its proof is analogous to the proof of  \cite[Theorems 4.1 through 4.3]{Go2}.

\begin{proposition}\label{applies}
For a knot $K$ and a polynomial $p$ which has no non-symmetric factors,  $A_0^p(K)$ is anisotropic if each factor of $p$ divides the Alexander polynomial of $K$ with multiplicity at most $1$.\end{proposition}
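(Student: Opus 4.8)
The plan is to reduce everything to the primary decomposition of $A_0^p(K)$ and to show that, under the stated hypotheses, each primary summand is a simple module carrying an \emph{anisotropic} Blanchfield form; the orthogonality of distinct primary components then closes the argument. This is the structure of Gilmer's treatment in \cite{Go2}, adapted to the localized setting.

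First I would pin down the module structure. Since $\Q[t^{\pm1}]$ is a PID and $A_0(K)$ is a torsion module whose order is the Alexander polynomial $\Delta$, tensoring with $R_p$ kills every cyclic summand whose order is coprime to $p$ and retains the primary components supported at the irreducible factors of $p$. Writing $p = \prod_i p_i$ (up to units) with the $p_i$ distinct symmetric irreducibles — this is exactly the hypothesis that $p$ has no non-symmetric factors — I obtain $A_0^p(K) \cong \bigoplus_i V_i$, where $V_i$ is the $p_i$-primary part of $A_0(K)$. The multiplicity hypothesis says each $p_i$ divides $\Delta$ at most once, so $V_i$ is either $0$ (when $p_i \nmid \Delta$) or a single cyclic summand $\Q[t^{\pm1}]/(p_i)$. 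Since $p_i$ is irreducible, each nonzero $V_i$ is a simple $R_p$-module; concretely $\Q[t^{\pm1}]/(p_i)$ is a field on which $t \mapsto t^{-1}$ induces the involution of $R_p$, this last point using that $(p_i)$ is preserved by the involution because $p_i$ is symmetric.

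Next I would establish that distinct primary components are orthogonal and deduce anisotropy of each summand. For $x \in V_i$ and $y \in V_j$ with $i \ne j$, sesquilinearity gives $p_i\, Bl(x,y) = Bl(p_i x, y) = 0$ and $\overline{p_j}\, Bl(x,y) = Bl(x, p_j y) = 0$; as $p_j$ is symmetric, $\overline{p_j} = p_j$ up to a unit, so $Bl(x,y)$ is annihilated by the coprime elements $p_i, p_j$ of $R_p$ and hence vanishes. Thus $A_0^p(K) = \bigoplus_i V_i$ is an orthogonal decomposition, the adjoint map is block diagonal, and nonsingularity of $Bl$ on $A_0^p(K)$ forces nonsingularity of its restriction to each $V_i$. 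A nonzero $V_i$ is then anisotropic: it is one-dimensional over the field $\Q[t^{\pm1}]/(p_i)$, so if $x$ generates $V_i$ then $Bl(x,x) \ne 0$ (otherwise sesquilinearity makes $Bl$ vanish on $V_i$), and for any nonzero $y = a x$ we get $Bl(y,y) = a\overline{a}\, Bl(x,x) \ne 0$ because the involution of the field is injective. Finally, for an arbitrary isotropic submodule $P \subseteq A_0^p(K)$, the primary decomposition $P = \bigoplus_i (P \cap V_i)$ is respected by the orthogonal splitting, so each $P \cap V_i$ is isotropic in $V_i$ and therefore zero; hence $P = 0$ and $A_0^p(K)$ is anisotropic.

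The main obstacle is conceptual rather than computational: pinpointing precisely where symmetry enters. It is symmetry of the $p_i$ that makes each primary component pair nontrivially \emph{with itself} rather than with a conjugate partner $\overline{p_i}$, and this is exactly what fails in the non-symmetric case, where $\Q[t^{\pm1}]/(q) \oplus \Q[t^{\pm1}]/(\overline{q})$ becomes hyperbolic and acquires isotropic submodules. I expect the only genuinely delicate verifications to be that nonsingularity of $Bl$ descends to each orthogonal summand and that the induced self-pairing on the simple module $V_i$ is nondegenerate; both follow from the block-diagonal form of the adjoint isomorphism, but each should be stated with care.
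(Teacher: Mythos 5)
Your proof is correct; it reaches the same conclusion as the paper but organizes the argument around a different decomposition. The paper uses the elementary divisor decomposition $A_0(K)=\bigoplus_i \Q[t^{\pm1}]/(q_i)$ with $q_i\mid q_{i+1}$: the multiplicity-one hypothesis forces every divisor below the top one to be coprime to $p$ and hence to vanish under localization, so $A_0^p(K)\cong R_p/(h)$ is cyclic with $h=\gcd(p,\Delta_K)$ squarefree and symmetric, and anisotropy is then a single computation with one generator $\eta$: $Bl(s\eta,s\eta)=s\overline{s}\,r/h=0$ forces $h\mid s\overline{s}$, hence $h\mid s$, hence $s\eta=0$. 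You instead pass to the primary decomposition, obtaining an orthogonal direct sum of simple summands $\Q[t^{\pm1}]/(p_i)$, each a field carrying the induced involution, and verify anisotropy summand by summand. The two pictures are reconciled by the Chinese Remainder Theorem ($R_p/(h)\cong\bigoplus_i \Q[t^{\pm1}]/(p_i)$ because $h$ is squarefree), and the hypotheses do identical work in both: squarefreeness keeps each primary piece simple, and symmetry of each $p_i$ rules out $p_i\mid s\overline{s}$ being witnessed only by $p_i\mid\overline{s}$. Your route costs two extra verifications (orthogonality of distinct primary components, and the descent of nonsingularity to each orthogonal block via the block-diagonal adjoint), but it buys a sharper view of exactly where symmetry enters and of the failure mode for non-symmetric factors, where conjugate primary components pair hyperbolically and produce isotropic submodules; the paper's route is shorter because the single-generator computation absorbs all of this at once.
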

\begin{proof}[Proof of Proposition~\ref{applies}]
Let $\Delta_K$ be the Alexander polynomial of $K$.

$A_0(K)$, being a finitely generated torsion module over the PID $\Q[t^{\pm1}]$, has a decomposition into elementary factors:
\begin{equation}
A_0(K) = \iterate{\oplus}{i=1}{n} \dfrac{\Q[t^{\pm1}]}{(q_i)}
\end{equation}
where $q_i$ divides $q_{i+1}$ for each $i$ and $q_1 q_2\dots q_n=\Delta_K$.  Let $h$ be the greatest common divisor of $p$ and the Alexander polynomial of $K$.  Each prime factor $f$ of $h$ divides some $q_i$.  If $i<n$ then since $f$ also divides $q_{i+1}$, it follows that $f$ divides that Alexander polynomial of $K$ with multiplicity greater than $1$, contradicting the assumption to the contrary.  

Thus, for every $i<n$, $p$ is relatively prime to $q_i$, so $\frac{\Q[t^{\pm1}]}{(q_i)}\otimes R_p=0$ and 
\begin{equation}
A_0^p(K) = \dfrac{\Q[t^{\pm1}]}{(q_n)}\otimes R_p = \dfrac{R_p}{(q_n)}
\end{equation}
is cyclic.  Since $q_n=h h'$ where $h'$ is relative prime to $p$ and thus is a unit in $R_p$, the ideals $(q_n)$ and $(h)$ in $R_p$ are equal.  Thus,
\begin{equation}
A_0^p(K) = \dfrac{R_p}{(q_n)} = \dfrac{R_p}{(h)}
\end{equation}

Let $\eta$ be a generator of $A_0^p(K)$.  Then since the localized Blanchfield form is nonsingular, $Bl(\eta, \eta)=\frac{r}{h}\in\frac{\Q(t)}{R_p}$, where $r$ and $h$ are coprime.  

If $Bl(s(t)\eta, s(t)\eta) = \displaystyle\dfrac{s\overline{s}r}{h}=0$, that is, $s(t)\eta$ is an isotropic element, then it must be that $h$ divides $s\overline{s}$.  Since $p$ and thus $h$ are squarefree and have no non-symmetric factors, this implies that $h$ divides $s$ so $s(t)\eta=0$.  Thus, any element of an isotropic submodule of $A_0^p(K)$ must be the zero element and so $K$ is $p$-anisotropic.  

\end{proof}

Thus, Theorem~\ref{big theorem 1} applies for every choice of $p$ when the Alexander polynomial of $K_i$ is square-free for each $i$.

The restriction to the case of knots of finite algebraic concordance order and coprime squarefree Alexander polynomials can be addressed by the repeated application of Theorem~\ref{big theorem 1}.  Doing so yields the following corollary, which can be thought of as a style of polynomial splitting theorem as in \cite{polynomialSplittingOfRho}.

\begin{corollary}\label{big corollary 1}
Let $K_1, K_2,\dots$ be knots of finite algebraic order which have coprime squarefree Alexander polynomials.  If $\rho^1(K_i)$ is nonzero for each $i$, then the knots $K_i$ form a  linearly independent set in $\C$.
\end{corollary}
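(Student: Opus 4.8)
The plan is to argue by contradiction. Suppose the $K_i$ are linearly dependent; then, after reindexing, there is a nontrivial relation with $m_1K_1\#\dots\#m_nK_n$ slice and $m_j\neq 0$ for some $j$. The idea is to apply Theorem~\ref{big theorem 1} with the single symmetric polynomial $p=\Delta_{K_j}$ (Alexander polynomials are symmetric), chosen precisely so that $\rho^1_p$ isolates the $j$-th summand as $\rho^1(K_j)$ while annihilating every other term.

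First I would verify the hypotheses of Theorem~\ref{big theorem 1}, namely that each $K_i$ is $p$-anisotropic for $p=\Delta_{K_j}$. For $i\neq j$, coprimality of the Alexander polynomials makes $\Delta_{K_i}$ a unit in $R_p$; since $\Delta_{K_i}$ annihilates $A_0(K_i)$, the localization $A_0^p(K_i)=A_0(K_i)\otimes R_p$ is trivial, so $K_i$ is vacuously $p$-anisotropic. For $i=j$, squarefreeness of $\Delta_{K_j}$ means every factor of $p$ divides $\Delta_{K_j}$ with multiplicity one, so Proposition~\ref{applies} shows $K_j$ is $p$-anisotropic. With all $K_i$ now $p$-anisotropic, Theorem~\ref{big theorem 1} gives $\sum_{i=1}^n m_i\rho^1_p(K_i)=0$.

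It remains to evaluate each term using Proposition~\ref{rho prime}. For $i\neq j$, the polynomial $p=\Delta_{K_j}$ is coprime to $\Delta_{K_i}$, so part (1) gives $\rho^1_p(K_i)=\rho^0(K_i)$; and since $K_i$ has finite algebraic order, $\rho^0(K_i)=0$ (as noted after Definition~\ref{rho0}, $\rho^0$ vanishes on knots of finite algebraic order). For $i=j$, part (2) gives $\rho^1_p(K_j)=\rho^1(K_j)$. The sum therefore collapses to $m_j\rho^1(K_j)=0$, and since $m_j\neq 0$ this forces $\rho^1(K_j)=0$, contradicting the hypothesis. Hence no nontrivial relation can exist and the $K_i$ are linearly independent.

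The step I expect to be the crux is the $p$-anisotropy of $K_j$ for $p=\Delta_{K_j}$ via Proposition~\ref{applies}. That proposition requires $p$ to have no non-symmetric factors, and squarefreeness alone does not guarantee this: were $\Delta_{K_j}$ to admit a non-symmetric irreducible factor $f$ (necessarily accompanied by its conjugate $\overline{f}$), then $R_p/(f)$ would be a self-annihilating submodule of $A_0^p(K_j)$ and $K_j$ would fail to be $p$-anisotropic. Thus the genuine content is that the Alexander polynomials factor into symmetric irreducibles; one should either fold this into the hypothesis or verify it for the family at hand, as is automatic for the twist knots of Section~\ref{twist}, whose Alexander polynomials are irreducible and symmetric.
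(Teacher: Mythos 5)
Your argument is the same as the paper's: localize at $p=\Delta_{K_j}$, use Proposition~\ref{applies} for $p$-anisotropy, apply Theorem~\ref{big theorem 1}, and collapse the sum via Proposition~\ref{rho prime} together with the vanishing of $\rho^0$ on knots of finite algebraic order. Your closing caveat is well taken: the paper's own proof invokes Proposition~\ref{applies} without checking that $\Delta_{K_j}$ has no non-symmetric irreducible factors, so the hypothesis you flag (squarefree \emph{and} factoring into symmetric irreducibles) is genuinely needed and is satisfied by the twist knots to which the corollary is applied.
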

\begin{proof}

Let $p_n$ be the Alexander polynomial of $K_n$.  

Suppose that some linear combination $m_1K_1\#\dots \# m_kK_k$ is slice.  Proposition~\ref{applies} gives that $K_i$ is $p_n$-anisotropic for every $i$ so 
\begin{equation}\label{big corollary equation}
\displaystyle \sum_{i=1}^k m_i\rho^1_{p_n(t)}(K_i) = 0
\end{equation} 
by Theorem~\ref{big theorem 1}.  Proposition~\ref{rho prime} applies to give that $\rho^1_{p_n}(K_i)=\rho^0(K_i)$ when $i\neq n$ and $\rho^1_{p_n}(K_n)=\rho^1(K_n)\neq 0$.  Since $K_i$ is assumed to be of finite algebraic order, $\rho^0(K_i) = 0$.   

Thus, all but one of the terms on the left hand side of \ref{big corollary equation} are zero.  Dropping them reveals that $m_n\rho^1(K_n) = 0$.  Since $\rho^1(K_n)$ is assumed to be nonzero, it must be that $m_n$ is zero.  Repeating this argument for every natural number $n$ gives that these knots are linearly independent.
\end{proof}

\subsection{Examples}\label{subsect:examples}

As applications of Theorem~\ref{big theorem 1} we give some infinite lineary independent subsets of $\C$.

\begin{example}Consider any knot $J$ with non-zero $\rho^0$-invariant.  For every integer $n$, excluding those of the form $n=-x^2-x$ for $x\in \Z$, let the knot $K_n$, be given by connected sum of $T_n(J)$  (the $n$-twisted double of $J$) with $-T_n$ (the reverse of the mirror image of the $n$ twist knot).  In this example we show that the set containing all such $K_n$ is linearly independent in $\C$.  These knots are depicted in Figure~\ref{fig:linearly independent}.

\begin{figure}[h]
\setlength{\unitlength}{1pt}
\begin{picture}(165,70)
%\put(165,70){@}
\put(00,0){\includegraphics[width=.45\textwidth]{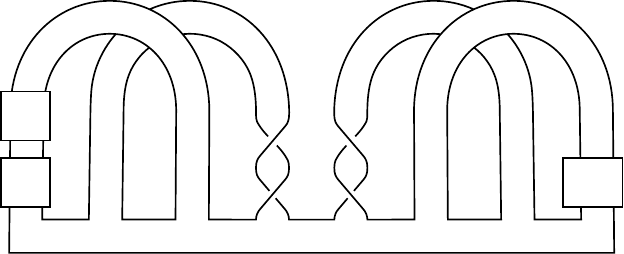}}
\put(04,15){$n$}
\put(146,16){{$-n$}}
\put(04,33){$J$}
\end{picture}
\caption{An infinite family of  algebraically slice knots which are linearly independent in $\C$. ($n\neq -x^2-x$)}\label{fig:linearly independent}
\end{figure}

In order to see this, suppose $\iterate{\#}{n}{} c_n (T_n(J)\#-T_n)$ is slice.  By Proposition~\ref{applies}, the knots, $T_n$ and $T_n(J)$ are $p$-anisotropic for every symmetric polynomial $p$.  Thus, Theorem~\ref{big theorem 1} applies to give that 
\begin{equation}\label{e1}
\displaystyle \sum_n c_n\left(\rho^1_p(T_n(J))-\rho^1_p(T_n)\right)=0
\end{equation}
Take $p$ to be the the Alexander polynomial of $T_m$.  Since the twist knots all have distinct prime Alexander polynomials, Proposition~\ref{rho prime}, applies to reduce (\ref{e1}) to 
\begin{equation}\label{first example equation}
\displaystyle \sum_{n\neq m} \left(c_n\rho^0(T_n(J))-c_n\rho^0(T_n)\right)+c_m\rho^1(T_m(J))-c_m\rho^1(T_n)=0
\end{equation}
By Proposition~\ref{homomorphism} $c_m\rho^1(T_m(J))-c_m\rho^1(T_n)=c_m\rho^0(J)$.  Since $\rho^0$ depends only on the   algebraic concordance class of a knot, $\rho^0(T_n(J))=\rho^0(T_n)$.  Plugging these values into equation~\eqref{first example equation} yields
\begin{equation}\displaystyle c_m\rho^0(J)=0.\end{equation}
Since $\rho^0(J)$ is nonzero, it must be that $c_m$ is zero.  Repeating this argument for every natural number $m$ we see that these knots are linearly independent.

\end{example}

Notice that the knots $K_n$ in the preceding example are algebraically slice and in particular are not anisotropic.  Despite this fact Theorem~\ref{big theorem 1} applied to give a proof that they are linearly independent.  The $\rho^1_p$-invariant gives concordance information about knots sitting in the subgroup generated by $p$-anisotropic knots.  This group includes many $p$-isotropic knots and even some algebraically slice knots.

\begin{example}Consider the knot $J_n$ depicted on the left hand side of Figure~\ref{fig:linearly independent order 2}.  For $n>0$ it is anisotropic, having prime Alexander polynomial $$\Delta_{J_n}(t) = n^2t^2+(1-2n^2)t+n^2.$$ It is concordance order 2 so Corollary~\ref{torsion corollary 1} applies to give that $\rho^1(J_n)=0$ for all $n$.  For each $n$ pick a curve $\eta_n$  which is nonzero in $A_0(J_n)$.  Let $T$ be a knot with nonvanishing $\rho^0(T)$-invariant.  Let $K_n$ be the result of the infection depicted in Figure~\ref{fig:linearly independent order 2}.  By the second part of Lemma~\ref{homomorphism} $\rho^1(K_n) = \rho^0(T)$.  Corollary~\ref{big corollary 1} then applies to show that the set $\{K_n\}^\infty_{n=1}$ is linearly independent.  
\end{example}

\begin{figure}[htbp]
\setlength{\unitlength}{1pt}
\begin{picture}(220,80)
%\put(220,80){@}
\put(0,10){\includegraphics[width=.25\textwidth]{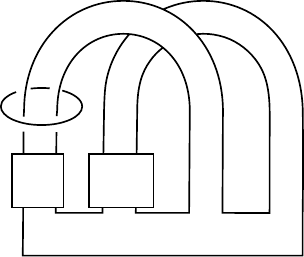}}
\put(130,10){\includegraphics[width=.25\textwidth]{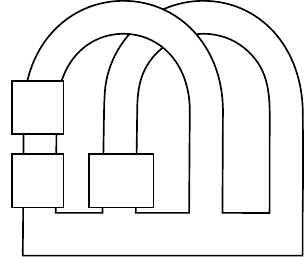}}
\put(10,30){$n$}
\put(30,30){$-n$}
\put(140,30){$n$}
\put(159,30){$-n$}
\put(20,63){$\eta_n$}
\put(138,50){$T$}
\put(48,0){$J_n$}
\put(178,0){$K_n$}
\end{picture}
\caption{Left: An infinite family of knots of order 2.  Right: An infinite family of knots which are algebraically of order 2 which is linearly independent in $\C$.  $n>0$.}\label{fig:linearly independent order 2}
\end{figure}

These examples both hinge on knowlege of the behavior of $\rho^1_p$ under infection.  If one wishes to say somthing about knots which do not result from infection, such as the twist knots, then another tool is needed.  Such a tool is found in Section~\ref{computational tools} and is employed to find information about twist knots in Section~\ref{twist}.

\subsection{The proof of Theorem~\ref{big theorem 1}}\label{proof of big 1}

We now prove Theorem~\ref{big theorem 1}.  The proof is  compartmentalized into several lemmas.

\begin{proof}[Proof of Theorem~\ref{big theorem 1}]

By replacing $K_i$ by $-K_i$ if nessessary we may assume that each $m_i$ is non-negative.  By replacing $m_iK_i$ by $K_i\#\dots \#K_i$ we may assume that $m_i=1$ for each $i$.   A 4-manifold $W$ is constructed whose boundary consists of $M(K_1)\sqcup\dots \sqcup M(K_n)$.  From here on $M(K_i)$ is abbreviated as $M_i$.  It will be shown using Lemma~\ref{repeated use lemma} as well as Lemmas~\ref{H1 isom} and \ref{kernel is isotropic} that the inclusion induced maps $$\alpha_i:\pquotient{M_i}{p}\to\pquotient{W}{p}$$ are monomorphisms.  This 4-manifold will be shown in Lemmas~\ref{second homology by bdry} and \ref{twisted homology} to have signature defect equal to zero with respect to the homomorphism, \begin{equation}\psi:\pi_1(W)\to\pquotient{W}{p}.\end{equation}  Once this is done, then by definition~\ref{rho}, \begin{equation}\displaystyle \sum_{i=1}^n m_i\rho^1_p(K_i) = 0.\end{equation}

      We let $V$ be the connected 4-manifold given by taking the disjoint union $\iterate{\sqcup}{i=1}{n} M_i\times [0,1]$ together with $n-1$ copies of $S^1\times B^2\times[0,1]$ indexed from $1$ to $n-1$ and gluing the i'th copy of $S^1\times B^2\times\{0\}$  to a neighborhood of a curve representing the meridian in $M_i\times\{1\}$ and the i'th copy of $S^1\times B^2\times\{1\}$ to a neighborhood of a curve representing the meridian in $M_{i+1}\times\{1\}$.  Let $\bdry_-(V) = \iterate{\sqcup}{i=1}{n}M_i$ denote the disjoint union of the $M_i$ boundary components of $V$.  Let $\bdry_+ V$ denote the $-M(K_1\#\dots \#K_n)$ boundary component of $V$.

      Suppose that $K:= K_1\#\dots \#K_n$ is slice.  Let $E$ be the complement of a slice disk for $K$ in $B^4$.  $\bdry E = M(K) = -\bdry_+V$.  Let $W$ be the union of $E$ and $V$ glued together along this common boundary component.

%lemma1.1
\begin{lemma}\label{H1 isom}
For each $1 \le i \le n$ the map induced by the inclusion of $M_i \subset \bdry (W)$ into $W$ is an isomorphism on first homology.
\end{lemma}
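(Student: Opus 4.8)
The plan is to mirror the computation of property~(1) in the proof of Proposition~\ref{homomorphism}, now iterated along the chain of tubes making up $V$ and then combined with the slice-disk complement $E$ via a Mayer--Vietoris argument. Since $E$ caps off the $\bdry_+ V = -M(K)$ boundary component, the total boundary is $\bdry W = \iterate{\sqcup}{i=1}{n}M_i$, and for each $i$ we have $H_1(M_i) = \Z\langle\mu_i\rangle$, generated by the meridian $\mu_i$ of $K_i$.

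First I would compute $H_1(V)$ using the long exact sequence of the pair $(V, \bdry_- V)$, exactly as in \eqref{first few terms}. The relative group $H_2(V, \bdry_- V)$ contains the classes $e_1,\dots,e_{n-1}$ represented by the cores of the $n-1$ tubes $S^1\times B^2\times[0,1]$; reading each tube as a connected sum realized by infection along a meridian (the $n=2$ situation of Proposition~\ref{homomorphism}), the connecting homomorphism sends $e_i\mapsto \mu_i-\mu_{i+1}$. Moreover the $S^1$-core of each tube is glued to a meridian and so is homologous in $V$ to some $\mu_i$; hence the tubes contribute no new one-dimensional classes and $H_1(\bdry_- V)\to H_1(V)$ is surjective. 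The resulting exact sequence
\[
\langle e_1,\dots,e_{n-1}\rangle \xrightarrow{\, e_i\mapsto \mu_i-\mu_{i+1}\,} \langle \mu_1,\dots,\mu_n\rangle \longrightarrow H_1(V)\longrightarrow 0
\]
then gives $H_1(V)\cong \Z^n/\langle \mu_i-\mu_{i+1}\rangle \cong \Z$, generated by the common class $\mu := [\mu_1]=\cdots=[\mu_n]$, and shows that each inclusion $H_1(M_i)\to H_1(V)$ carries $\mu_i$ to $\mu$ and is therefore an isomorphism.

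Next I would glue in $E$. As $E$ is the exterior of a slice disk for $K$, we have $H_1(E)\cong \Z$, generated by the meridian of $K$, and the inclusion of the common boundary $M(K)$ induces isomorphisms onto both $H_1(E)$ and (by the previous paragraph, via $\mu_K\mapsto\mu$) onto $H_1(V)$. The Mayer--Vietoris sequence of $W=V\cup_{M(K)}E$,
\[
H_1(M(K)) \longrightarrow H_1(V)\oplus H_1(E) \longrightarrow H_1(W) \longrightarrow \widetilde{H}_0(M(K)),
\]
has first map $x\mapsto (i_*x,-j_*x)$ with both $i_*,j_*$ isomorphisms $\Z\to\Z$, hence injective with cokernel $\Z$; since $M(K)$ is connected, $\widetilde{H}_0(M(K))=0$. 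Therefore $H_1(W)\cong\Z$, and the composite $H_1(M_i)\to H_1(V)\to H_1(W)$ sends $\mu_i$ to a generator, i.e.\ is an isomorphism for every $i$, as claimed.

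I expect the only genuinely delicate point to be the geometric bookkeeping behind the connecting map $e_i\mapsto \mu_i-\mu_{i+1}$ together with the identifications of meridian classes across the gluings, namely that each tube identifies consecutive meridians and that the meridian of $K=K_1\#\cdots\#K_n$ matches the common class $\mu$. This is exactly the $n=2$ computation of Proposition~\ref{homomorphism} applied successively along the chain, so it introduces no new difficulty beyond keeping track of orientations and basepoints.
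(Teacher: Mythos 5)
Your proof is correct and follows essentially the same route as the paper's: compute $H_1(V)\cong\Z$ with all meridians $\mu_1,\dots,\mu_n$ identified (the paper phrases this as a Mayer--Vietoris argument rather than the long exact sequence of the pair, a cosmetic difference), note that the meridian of $K$ in $\bdry_+V=\bdry E$ generates $H_1(E)\cong\Z$ as well, and conclude via Mayer--Vietoris for $W=V\cup_{M(K)}E$. No gaps.
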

\begin{proof}
Let $\mu_i$ denote the curve in $M_i$ given by the meridian.  By a Mayer-Vietoris argument 
\begin{equation}H_1(V) =  \langle \mu_1, \dots, \mu_n | \mu_1=\mu_2 = \dots =\mu_n\rangle \cong \Z\end{equation} 
is generated by the meridian of any component of $\bdry_- V$.  The meridian of $K$ in $\bdry_+ V$ is isotopic in $V$ to the meridian of any one of the components of $\bdry_- V$, so the inclusion of $\bdry_+ V$ into $V$ induces an isomorphism on $H_1$.  Additionally the inclusion of $\bdry_+ V = \bdry E$ into $E$ induces an isomorphism on $H_1$.  Combining these facts with another Mayer-Vietoris argument, one sees that $H_1(W) = \Z$ is generated by the meridian of any one of the boundary components of $W$, which completes the proof.
\end{proof}

%Lemma 1.2
\begin{lemma}\label{second homology by bdry}
The second homology of $W$ is carried by the boundary of $W$ so $\sigma(W)=0$.
\end{lemma}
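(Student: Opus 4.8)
The plan is to prove the stronger statement that $H_2(W;\Q)$ lies in the image of the inclusion-induced map from $H_2(\bdry W;\Q)$, and then to deduce $\sigma(W)=0$ from the standard fact that boundary-carried classes lie in the radical of the intersection form. Since $K=K_1\#\dots\#K_n$ is slice, I first record that the slice-disk complement $E$ has the rational homology of a circle, so that $H_2(E;\Q)=0$ and $H_1(E;\Q)\cong\Q$ is generated by the meridian of $K$.

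First I would run Mayer--Vietoris for the decomposition $W=V\cup_{M(K)}E$. Because $H_2(E;\Q)=0$ and the meridian generates $H_1(M(K))$ and maps isomorphically to $H_1(E;\Q)$ (and to $H_1(V;\Q)$, by Lemma~\ref{H1 isom}), the map $H_1(M(K);\Q)\to H_1(V;\Q)\oplus H_1(E;\Q)$ is injective; hence the connecting map $H_2(W;\Q)\to H_1(M(K);\Q)$ vanishes and the inclusion $H_2(V;\Q)\to H_2(W;\Q)$ is surjective. Since $E$ is glued to $V$ along the whole component $\bdry_+V=M(K)=\bdry E$, the remaining boundary is exactly $\bdry_-V=\iterate{\sqcup}{i=1}{n}M_i=\bdry W$. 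Thus it suffices to prove that $H_2(V;\Q)$ is carried by $\bdry_-V$: the composite $H_2(\bdry_-V;\Q)\to H_2(V;\Q)\to H_2(W;\Q)$ would then exhibit every class of $H_2(W;\Q)$ as coming from $\bdry W$.

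To show $H_2(V)$ is carried by $\bdry_-V$, I would use the long exact sequence of the pair $(V,\bdry_-V)$,
\begin{equation*}
H_2(\bdry_-V)\xrightarrow{i_*}H_2(V)\xrightarrow{p_*}H_2(V,\bdry_-V)\xrightarrow{\bdry_*}H_1(\bdry_-V)\xrightarrow{q_*}H_1(V),
\end{equation*}
exactly as in the proof of property (4) of Proposition~\ref{homomorphism}, but now with $n-1$ connecting tubes in place of one. The key computation, generalizing equation~\eqref{first few terms}, is that $H_2(V,\bdry_-V)$ is free of rank $n-1$, generated by the cores $e_1,\dots,e_{n-1}$ of the $n-1$ copies of $S^1\times B^2\times[0,1]$, with $\bdry_* e_k=\mu_k-\mu_{k+1}$. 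Since the classes $\mu_k-\mu_{k+1}$ are linearly independent in $H_1(\bdry_-V)=\langle\mu_1,\dots,\mu_n\rangle$, the map $\bdry_*$ is injective; hence $p_*=0$ and $i_*$ is surjective, so $H_2(V)$ is carried by $\bdry_-V$.

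Finally, the signature statement follows formally: the intersection form on $H_2(W;\Q)$ factors through $H_2(W,\bdry W;\Q)$, and by exactness the image of $H_2(\bdry W;\Q)\to H_2(W;\Q)$ lies in the kernel of $H_2(W;\Q)\to H_2(W,\bdry W;\Q)$. Having shown that this image is all of $H_2(W;\Q)$, the intersection form is identically zero and therefore $\sigma(W)=0$. The main obstacle is the middle computation, namely verifying that $H_2(V,\bdry_-V)$ has rank exactly $n-1$ and is generated by the tube cores; the point requiring care is that the free top face $\bdry_+V=M(K)$ and the knot topology of the $M_i$ contribute no additional relative $2$-classes, which is the direct $n$-fold analogue of the $n=2$ case treated in Proposition~\ref{homomorphism}.
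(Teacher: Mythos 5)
Your proof is correct and follows essentially the same route as the paper: both establish $H_2(E)=0$, show that $H_2(V)$ is carried by $\bdry_- V$, and then use Mayer--Vietoris for $W=V\cup_{M(K)}E$ together with the injectivity of $H_1(M(K))\to H_1(V)\oplus H_1(E)$ to conclude that $H_2(W)$ is boundary-carried, whence $\sigma(W)=0$. The only (cosmetic) difference is that the paper handles the middle step by observing that $V$ is homotopy equivalent to $\bdry_- V$ with $1$- and $2$-cells attached along homologically independent curves, whereas you run the long exact sequence of the pair $(V,\bdry_- V)$ with the excision computation of the tube cores $e_k\mapsto\mu_k-\mu_{k+1}$ --- the same computation the paper itself performs in the $n=2$ case of Proposition~\ref{homomorphism}.
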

\begin{proof}
By a Mayer-Vietoris argument using the decomposition of the 4-ball as the union of $E$ with a the neighborhood of a slice disk for $K$, $H_2(E) 
%\cong H_2(S^1 \times B^2) 
= 0$.  $V$ is homotopy equivalent to the union of its boundary together with 1-cells between different components and 2-cells glued to curves which are linearly independent in first homology so $V$ has second homology carried by its boundary.  As previously noted, the inclusion of any of the boundary components of $V$ into $V$ induces an isomorphism on $H_1$.  The Mayer-Vietoris long exact sequence associated to $W=V \cup E$ is
\begin{center}
$H_2(V)\oplus H_2(E) \overset{i_*}{\rightarrow}H_2(W)\overset{\bdry_*}{\rightarrow} H_1(M(K)) \overset {j_*\oplus k_*}{\longrightarrow} H_1(V)\oplus H_1(E)$
\end{center}
In this sequence, $j_*$ is an isomorphism so $j_*\oplus k_*$ is a monomorphism.  Thus, $\bdry_* = 0$ and $i_*$ is an epimorphism.  Since $H_2(E)=0$, $i_*$ is an epimorphism from $H_2(V)$ to $H_2(W)$.  Hence, $H_2(W)$ is carried by $H_2(V)$ which in turn is carried by $H_2(\bdry_- V) = H_2(\bdry W)$.
\end{proof}

\begin{lemma}\label{twisted homology}
Let $\phi:\pi_1(W)\to \Gamma$ be any PTFA coefficient system on $W$ with $\phi(\mu_i)\neq 0$ where $\mu_i$ is the meridian of any one of the boundary components of $W$.  Let $\K$ be the classical field of fractions of the Ore domian $\Q[\Gamma]$.  

Then $H_2(W;\K)=0$ so $\sigma^{(2)}(W;\phi)=0$.
\end{lemma}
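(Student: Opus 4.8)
The plan is to prove the single statement $H_2(W;\K)=0$; everything else is then formal. Once $H_2(W;\K)=0$, the module $H_2(W;\Q[\Gamma])$ is $\Q[\Gamma]$-torsion, so the quotient appearing on the right-hand side of inequality~\eqref{inequality} has $\Q[\Gamma]$-rank $0$, whence $|\sigma^{(2)}(W,\phi)|\le 0$ and the lemma follows. (Recall that $\Q[\Gamma]$ is an Ore domain because $\Gamma$ is PTFA, so \eqref{inequality} applies and $\Q[\Gamma]$-rank agrees with $\dim_\K$ after tensoring with $\K$.) I would establish $H_2(W;\K)=0$ by a Mayer--Vietoris computation assembled from the elementary pieces of $W$, in exact analogy with the proof of Proposition~\ref{homomorphism}.

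First I would check that $\phi$ restricts nontrivially to every piece. By Lemma~\ref{H1 isom} all the meridians $\mu_i$ are identified in $H_1(W)=\Z$, and the core circle of each connecting tube $S^1\times B^2\times[0,1]$ is isotopic in $V$ to such a meridian; since $\phi(\mu_i)\neq 0$, the coefficient system is nontrivial on each $M_i$, on each connecting tube, and on each gluing region $S^1\times B^2$. Consequently $H_*(M_i;\K)=0$ by \cite[Corollary 3.12]{C}, while each connecting tube and each gluing region has vanishing $\K$-homology because the relevant cover is the contractible space $\R\times B^2$ (as in Proposition~\ref{homomorphism}). Gluing the pieces of $V$ one at a time and applying Mayer--Vietoris with $\K$-coefficients then gives $H_*(V;\K)=0$ for all $*$, and the same input gives $H_*(M(K);\K)=0$.

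It remains to handle the slice-disk exterior $E$ and to assemble. Since $\bdry E=M(K)$ has vanishing $\K$-homology, the long exact sequence of the pair identifies $H_*(E;\K)$ with $H_*(E,M(K);\K)$. The inclusion $M(K)=\bdry E\hookrightarrow E$ is surjective on $\pi_1$ --- a standard feature of a slice-disk exterior, which is built from its boundary by attaching handles of index at least $2$ --- and surjectivity on $\pi_1$ forces $H_1(E,M(K);\K)=0$; combined with $H_1(M(K);\K)=0$ this yields $H_1(E;\K)=0$. Because $\chi(E)=0$ (as $B^4=E\cup N(D)$ with $N(D)$ contractible and $E\cap N(D)\simeq S^1$) and Poincar\'e--Lefschetz duality over the skew field $\K$ pairs $\dim_\K H_n(E;\K)$ with $\dim_\K H_{4-n}(E;\K)$, the vanishing of $H_0$, $H_1$ and $H_4$ propagates to $H_*(E;\K)=0$. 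A final Mayer--Vietoris for $W=V\cup_{M(K)}E$, in which all of $H_*(V;\K)$, $H_*(E;\K)$ and $H_*(M(K);\K)$ vanish, gives $H_*(W;\K)=0$; in particular $H_2(W;\K)=0$.

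The step I expect to be the crux is the treatment of $E$: this is precisely where the hypothesis that $K$ is slice enters, and it reduces to the vanishing of $H_1(E;\K)$. The two things to get right there are the $\pi_1$-surjectivity of $\bdry E\hookrightarrow E$ and the passage from it to $H_1(E,M(K);\K)=0$ with twisted coefficients. The remaining bookkeeping --- verifying that $\phi$ stays nontrivial on each elementary block of $V$, on which the vanishing of that block's $\K$-homology depends --- is routine but should be stated explicitly.
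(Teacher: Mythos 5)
Your handling of $V$ and the final Mayer--Vietoris assembly match the paper's proof, but your treatment of the slice-disk exterior $E$ contains a genuine error. You assert that $\pi_1(\bdry E)\to\pi_1(E)$ is surjective as ``a standard feature of a slice-disk exterior, which is built from its boundary by attaching handles of index at least $2$.'' This is false in general: a slice disk exterior admits such a handle decomposition (equivalently, has $\pi_1(\bdry E)\to\pi_1(E)$ onto) exactly when the disk is \emph{homotopy ribbon} in the sense of Casson--Gordon, and it is not known (and should not be assumed) that every topological slice disk has this property --- this is precisely the content of the ribbon--slice circle of problems. What van Kampen applied to $B^4=E\cup N(D)$ gives you is only that $\pi_1(E)$ is \emph{normally} generated by the meridian, which does not imply that the image of $\pi_1(M(K))$ is all of $\pi_1(E)$. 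Since your vanishing of $H_1(E,M(K);\K)$, and hence of $H_1(E;\K)$ and ultimately $H_2(W;\K)$, rests on this surjectivity, the argument as written does not go through.

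The gap is repairable, and the paper repairs it homologically rather than group-theoretically: $E$ is an integral homology circle (your Euler characteristic computation already contains the needed Mayer--Vietoris for $B^4=E\cup N(D)$), so the meridian gives a map $S^1\to E$ inducing an isomorphism on integral homology, and \cite[Proposition 2.10(b)]{whitneytowers} --- which ultimately relies on Strebel's theorem that PTFA groups lie in the class $D(\Q)$, so that vanishing of the rational homology of a finitely generated free $\Q[\Gamma]$-chain complex in low degrees forces vanishing of its $\K$-homology there --- yields $H_*(E;\K)=H_*(S^1;\K)=0$ with no hypothesis on $\pi_1$. You should replace your $\pi_1$-surjectivity step with this citation (or reproduce its Strebel-based argument); once $H_*(E;\K)=0$ is in hand, the rest of your proof is correct and coincides with the paper's.
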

\begin{proof}

The submanifold $V$ decomposes as the union of $\bdry_- V$ together with $n-1$ copies of $S^1\times B^2\times [0,1]$.  The intersection of these two sets is given by $2n-2$ copies of $S^1\times B^2$.  

The Mayer-Vietoris sequence corresponding to this decomposition with coefficients in $\K$ gives the exact sequence
\begin{equation}
\begin{array}{c}
H_p(\bdry_- V;\K)\oplus \iterate{\oplus}{i=1}{n-1}H_p(S^1\times B^2\times [0,1];\K)\to H_p(V;\K)\\\to \iterate{\oplus}{i=1}{2n-2}H_{p-1}(S^1\times B^2;\K).
\end{array}
\end{equation}

By \cite[corollary 3.12]{C}, for each $p$ $H_p(\bdry_- V;\K)$, $H_{p}(S^1\times B^2\times [0,1];\K)$ and $H_{p-1}(S^1\times B^2;\K)$ all vanish, so $H_p(V;\K)=0$.  %By \cite[3.7 and 3.10]{C} or simply by considering what the $\K$-cover of the neighborhoods of the meridians is $H_*(S^1\times B^2;\K)=H_*(S^1\times B^2\times [0,1];\K)=0$, so $H_*(V;\K)=0$.

By \cite[2.10 b]{whitneytowers}, since there is an integral homology isomorphism from $S^1$ to $E$, $H_*(E;\K) = H_*(S^1;\K) = 0$.  Consider the Mayer-Vietoris exact sequence of the decomposition $W = V\cup E$.  Note that $V\cap E = M(K)$. 
$$H_p(E;\K)\oplus H_p(V;\K)\to H_p(W;\K)\to H_{p-1}(M(K);\K).$$
We have seen that $H_p(E;\K) = H_p(V;\K) = H_{p-1}(M(K);\K) = 0$, so $H_p(W;\K)=0$ for all $p$.  Taking $p=2$ completes the proof.

\end{proof}

For the reminder of this section we denote by $t$ the generator of the first homology of $W$.  Since the maps on integral first homology induced by the inclusion of every one of $V, E, M_i, \bdry_+V$ into $W$ is an isomorphism, the first homology of any one of these with coefficients in $\Q[t^{\pm1}]$ or $R_p$ is isomorphic to its Alexander module or localized Alexander module respectively.

%%lemma 1.5
\begin{lemma}\label{kernel is isotropic}
The submodule $P$ of $H_1\left(\bdry W; R_p \right)$ given by the kernel of the map induced by the inclusion of $\bdry W$ into $W$  is isotropic.  For any component $M_i$ of $\bdry W$ the submodule $Q$ of $H_1\left(M_i; R_p\right)$ given by the kernel of the map induced by inclusion into $W$ is isotropic.
\end{lemma}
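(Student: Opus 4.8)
The plan is to prove the statement for the full boundary $\partial W = \bigsqcup_i M_i$ first and then read off the single-component version, since this is the more natural setting for a Poincar\'e--Lefschetz duality argument. Observe that the Blanchfield form on $H_1(\partial W;R_p)=\bigoplus_i H_1(M_i;R_p)$ is the orthogonal direct sum of the forms $Bl_{M_i}$ on the individual summands, because distinct components of a disconnected $3$-manifold do not link. Moreover the single-component kernel $Q=\ker\bigl(H_1(M_i;R_p)\to H_1(W;R_p)\bigr)$ is contained in $P=\ker\bigl(H_1(\partial W;R_p)\to H_1(W;R_p)\bigr)$, since the map from $M_i$ factors through $\partial W$. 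Hence once $P$ is shown to be isotropic we get $Bl_{M_i}(Q,Q)=Bl_{\partial W}(Q,Q)\subseteq Bl_{\partial W}(P,P)=0$, so it suffices to treat $P$.

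First I would record the input needed to make the pairings available. Each $H_1(M_i;R_p)=A_0^p(K_i)$ is $R_p$-torsion, and $H_1(W;R_p)$ is $R_p$-torsion as well: specializing the computation in the proof of Lemma~\ref{twisted homology} to the abelian coefficient system $\Gamma=\langle t\rangle$, $\K=\Q(t)$ gives $H_*(W;\Q(t))=0$, and tensoring the $\Q[t^{\pm1}]$-torsion module $H_1(W;\Q[t^{\pm1}])$ with the flat localization $R_p$ keeps it torsion. Because $p$ is symmetric, $R_p$ carries the involution $t\mapsto t^{-1}$, so the short exact coefficient sequence $0\to R_p\to \Q(t)\to \Q(t)/R_p\to 0$ is available; its Bockstein, together with Poincar\'e duality on the closed $3$-manifold $\partial W$ and the universal coefficient evaluation map, is what defines $Bl_{\partial W}$.

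The heart of the argument is the standard ``a class that dies in the bounding $4$-manifold annihilates the kernel'' duality computation. I would assemble the commutative ladder comparing the homology long exact sequence of $(W,\partial W)$ with $R_p$-coefficients to the cohomology long exact sequence, using the Poincar\'e--Lefschetz isomorphisms $H_k(W,\partial W;R_p)\cong H^{4-k}(W;R_p)$ and $H_k(\partial W;R_p)\cong H^{3-k}(\partial W;R_p)$ and the Bockstein above. Tracking these identifications yields an adjointness formula: there is a pairing $\langle\,\cdot\,,\cdot\,\rangle\colon H_2(W,\partial W;R_p)\times H_1(W;R_p)\to \Q(t)/R_p$ with
\[
Bl_{\partial W}(\partial \hat y,\,x)=\langle \hat y,\, i_*(x)\rangle
\]
for $\hat y\in H_2(W,\partial W;R_p)$ and $x\in H_1(\partial W;R_p)$, where $i_*\colon H_1(\partial W;R_p)\to H_1(W;R_p)$ is induced by inclusion. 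Granting this, the conclusion is immediate: by exactness $P=\operatorname{im}\partial$, so any $y\in P$ can be written $y=\partial\hat y$; and if $x\in P=\ker i_*$ then $i_*(x)=0$, whence $Bl_{\partial W}(y,x)=\langle\hat y,0\rangle=0$. Running this over all $x,y\in P$ gives $P\subseteq P^\perp$.

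The step I expect to be the main obstacle is establishing the adjointness formula in the displayed equation, i.e.\ the diagram chase that identifies $Bl_{\partial W}$ as the composite of Poincar\'e duality, the inverse Bockstein, and evaluation, and then verifies that lifting $y$ across $\partial$ forces the value to be computed by intersection data over $W$ (equivalently, to factor through $i_*(x)$). Care is needed to keep the sesquilinearity straight under the involution on $R_p$ --- this is precisely where symmetry of $p$ is used --- and to confirm that the torsion hypotheses from the first step make the relevant Bockstein and evaluation maps isomorphisms, so that $Bl_{\partial W}$ is the genuine nonsingular Blanchfield form and the chase is legitimate. Once $P$ is shown isotropic, the statement for $Q$ follows at once from the orthogonal decomposition of $Bl_{\partial W}$ over the components of $\partial W$ noted above.
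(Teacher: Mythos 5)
Your proposal is correct and follows essentially the same route as the paper: the same Poincar\'e duality/Bockstein/Kronecker ladder for the pair $(W,\partial W)$ with $R_p$-coefficients (justified by $H_*(W;\Q(t))=0$ from Lemma~\ref{twisted homology} applied to the abelian quotient), the same lift of elements of $P=\operatorname{im}\partial_*=\ker i_*$ through $\partial_*$ followed by the vanishing $i_*\circ\partial_*=0$, and the same deduction of the statement for $Q$ from $Q\subseteq P$. The "adjointness formula" you single out as the main obstacle is precisely the commutative diagram the paper writes down, so no new idea is needed beyond what you describe.
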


\begin{proof}

  $R_p$ embeds in the field $\Q(t)$.  By Lemma~\ref{twisted homology} and the universal coefficient theorem with field coefficients \cite[Corollary 2.31]{DaKi} \begin{equation}H^2(W;\Q(t))=H_1(W;\Q(t))=0,\end{equation} so the Bockstein homomorphism \begin{equation}B:H^1(W; \Q(t)/R_p) \rightarrow H^2(W;R_p)\end{equation} is an isomorphism.  %{THIS DESERVES A REFERENCE.}

Consider the following commutative diagram, where $P.D.$ denotes the Poincar\'e duality isomorphism and $\kappa$ denotes the Kronecker map.  The composition of the vertical maps on the right gives the Blanchfield form. 
\begin {equation}
\begin {diagram}
\node{H_2(W,\bdry W;R_p)}\arrow{s,r}{P.D.}\arrow{e,t}{\bdry_*}\node{H_1(\bdry W;R_p)}\arrow{s,r}{P.D.}\\
\node{H^2(W;R_p)}\arrow{s,r}{B^{-1}}\arrow{e,t}{i^*}\node{H^2(\bdry W;R_p)}\arrow{s,r}{B^{-1}}\\
\node{H^1(W,\Q(t)/R_p)}\arrow{s,r}{\kappa}\arrow{e,t}{i^*}\node{H^1(\bdry W;\Q(t)/R_p)}\arrow{s,r}{\kappa}\\
\node{\hom_{R_p}\left(H_1(W;R_p),\Q(t)/R_p\right)}\arrow{e,t}{\left(i_*\right)^{\dual}}\node{\hom_{R_p}\left(H_1(\bdry W;R_p), \Q(t)/R_p\right)}
\end{diagram}
\end{equation}
%possibly fix arrow length by means of \phantom(cvdjksnvsdk}

If $x$ and $y$ are elemets of $P$, then by the exactness of 
\begin{equation}
H_2(W,\bdry W;R_p)\overset{\bdry_*}{\to} H_1(\bdry W;R_p)\overset{i_*}{\to} H_1(W;R_p) 
\end{equation}
there are elements $X$ and $Y$ of $H_2(W, \bdry W; R_p)$ with $\bdry_* X = x$ and $\bdry_* Y = y$.  Thus, 
\begin{eqnarray*}
Bl(x,y) &=& ((\kappa \circ B^{-1} \circ (P.D.))(x))(y) \\&=& ((\kappa \circ B^{-1} \circ (P.D.)\circ \bdry_*)(X))(\bdry_*Y)
\end{eqnarray*}
using the commutative diagram above then
\begin{eqnarray*}
Bl(x,y) &=&((i_*^{\dual} \circ \kappa \circ B^{-1} \circ (P.D.))(X))(\bdry_*Y)\\
& = &(\kappa \circ B^{-1} \circ (P.D.))(X))(i_* \circ \bdry_* Y)
\end{eqnarray*}
and this is zero since $i_* \circ \bdry_* = 0$.  Thus, for any $x,y \in P$, $Bl(x,y)=0$ and $P\subseteq P^{\perp}$

%Conversely, if $y \in P^\perp$, then for all $X \in H^2(W, \bdry W; R)$  we have that $Bl(X, i_* y) = Bl(\bdry X, y) = 0$.  But then since $R$ is a PID, $\kappa$ is an epimorphism \cite[theorem 2.29]{DaKi} and the Blanchfield linking form is nonsingular (That is $Bl(X,Y) = 0$ for all $X$ implies $Y=0$).  so this implies that $i_*y$ must be zero so $y \in P$ and $P^\perp \subseteq P$.

Finally, $Q\subseteq P$ is contained in an isotropic submodule and so is isotropic.
\end{proof}

%It is even true that the submodule $P$ in Lemma~\ref{kernel is isotropic} above is Lagrangian.  One might be able to get more distance out of this fact, but we do not persue that course

At this point we can begin the final stages of the proof of Theorem~\ref{big theorem 1}.  The regular signature of $W$ is zero since its integral second homology is carried by its boundary (by Lemma~\ref{second homology by bdry}).  The $L^2$ signature is zero by the inequality~\eqref{inequality} since $H_2(W; \K)=0$ (by Lemma~\ref{twisted homology}).

By Lemma~\ref{H1 isom} the homomorphism induced by inclusion of each boundary component $M_i$ into $W$ on first homology is an isomorphism.  By Lemma~\ref{kernel is isotropic} and the assumption that there are no nontrivial isotropic submodules, it must be that the induced map on localized Alexander modules is an injection.  Lemma~\ref{repeated use lemma} now asserts that the inclusion induced map from $\dfrac{\pi_1(M_i)}{\pi_1(M_i)^{(2)}_p}$ to $\dfrac{\pi_1(W)}{\pi_1(W)^{(2)}_p}$ is injective and by definition~\ref{rho} \begin{equation}\displaystyle \sum_{i=1}^n \rho^1_p(K_i) = \sigma^{(2)}(W;\phi^p)-\sigma(W) = 0.\end{equation}  This concludes the proof of Theorem~\ref{big theorem 1}.
\end{proof}
\begin{remark}
It is not necessary that $E$ be a slice disk complement.  The only properties of $E$ that we use are the following: 
\begin{enumerate}
\item $\bdry E = M(K)$ and the map induced by inclusion on first homology is an isomorphism.
\item The kernel of the map induced by inclusion on first homology with coefficients in $R_p$ is isotropic.
\item The signature defect of $E$ corresponding to the quotient map \begin{equation*}\pi_1(E)\to\pquotient{E}{p(t)}\end{equation*} is zero.
\end{enumerate}
These conditions are satisfied when $E$ is a (1.5)-solution for $K$ \cite[Theorems 4.2 and 4.4]{whitneytowers}.  Thus, the word slice can be replaced with $(1.5)$-solvable and the concept of linear dependence in $\C$ can be replaced with linear dependence in $\C/\mathcal{F}_{(1.5)}$ in Theorem~\ref{big theorem 1} and Corollaries~\ref{torsion corollary 1} and \ref{big corollary 1}.
\end{remark}

      %%%%END PASTE%%%%END PASTE%%%%END PASTE%%%%END PASTE%%%%

%%%%%%%%%%%%%%%%%%%%%%%%%%%%%%%%%%%%%%%%%%%%%%%%%%%%%%%%
\section{Relating $\rho^1_p$ with $\rho^0$}\label{computational tools}
%%%%%%%%%%%%%%%%%%%%%%%%%%%%%%%%%%%%%%%%%%%%%%%%%%%%%%%%

Let $K$ be a knot which is of finite order $n >1$ in the algebraic concordance group.  Let $\Sigma$ be a genus $g$ Seifert surface for $\underset{n}{\#}K$.  Let $L$ be a link of $g$ components sitting on $\Sigma$ which represents a metabolizer of the Seifert form.  This section establishes a relationship between $\rho^1_p(K)$ and $\rho^0(L)$.  {Notice that since $L$ is a metabolizer, it must have zero pairwise linking numbers so that $\rho^0(L)$ is defined.}  The resulting theorem (Theorem~\ref{premain}) is used in Section~\ref{twist} to get information about $\rho^1$.

We need the following piece of notation:
\begin{definition}\label{band}
For a knot $K$ with Seifert surface $\Sigma$ and a link $L$ sitting on $\Sigma$, let $\gamma$ be a component of $L$.  A curve $m$ which does not intersect $\Sigma$ is called a \textbf{meridian for the band on which $\gamma$ sits} if $m$ bounds a disk in $S^3$ which intersects $\Sigma$ in an arc which crosses $\gamma$ in a single point and does not intersect any other component of $L$.
\end{definition}

%%%%beginpaste%%%%beginpaste%%%%beginpaste%%%%beginpaste%%%%beginpaste

\begin{construction}\label{W for computation}
Let $V$ be as in the proof of Theorem~\ref{big theorem 1}, so $\bdry_+ V$ is given by $M(J)$, where $ J = \underset{n}{\#} K$.  Thinking of $L$ as sitting in $\bdry_+ V$, let E be given by adjoining to $V$ a two handle along the zero framing of each component of $L$.  Let $\bdry_- E$ be the disjoint union of the $M(K)$ boundary components of $E$.  Let $\bdry_+ E$ be the boundary component of $E$ given by zero surgery on $L$ together with $J$, that is $\bdry_+ E = M(L \cup J)$.

Sliding $J$ over the handles attached to $L$, one sees that $J$ bounds a disk in $M(L)$ and $\bdry_+E\cong M(L)\#S^2\times S^1$.  Adjoin to $V\cup E$ a three handle along the nonseperating $S^2$ in $\bdry_+ E$ and call the resulting 4-manifold $W$.  We denote by $\bdry_+W $, the $M(L)$ component of $\bdry W$ and by $\bdry_-W=\bdry_-V$, the disjoint union of the $M(K)$ components of $\bdry W$.  The submanifold $W - V$  is identical to the manifold constructed in \cite[8.1]{derivatives}, where it is called $E$ and is described in more detail.  
\end{construction}

Let $p\in \Q[t^{\pm1}]$ be a polynomial.

We wish to use $W$ to make a claim involving the $\rho^1_p$-invariant of $K$ and the $\rho^0$-invariant of $L$.  We begin with an overview of the strategy we use to do so.  Let $\phi:\pi_1(W)\to\pquotient{W}{p}$ be the projection map.  Lemmas~\ref{H2 by bdry} and \ref{H1 injection with coeff} together with Lemma~\ref{repeated use lemma} will give that \begin{equation}n\rho^1_p(K)-\rho^0(L)=\sigma^{(2)}(W;\phi)-\sigma(W).\end{equation}  Lemma~\ref{bound} will give bounds on $\sigma^{(2)}(W;\phi)-\sigma(W)$.

%lemma 2.1
\begin{lemma}\label{H2 by bdry}
\begin{enumerate}
\item
For each $M(K)$-component of $\bdry_- W$ the map induced by inclusion from $H_1(M(K))$ to $H_1(W)$ is an isomorphism.
\item
The kernel of the map induced by inclusion from $A_0^p(K)$ to $A_0^p(W)$ is isotropic.
\end{enumerate}
\end{lemma}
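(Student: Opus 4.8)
The plan is to prove the two parts separately, reducing part (1) to the corresponding fact about the submanifold $V$ and obtaining part (2) by transporting the duality argument of Lemma~\ref{kernel is isotropic} to the 4-manifold of Construction~\ref{W for computation}. For part (1), the argument of Lemma~\ref{H1 isom} already shows (via Mayer--Vietoris on $V$) that each $M(K)$ component includes into $V$ by an isomorphism on first homology, with $H_1(V)\cong\Z$ generated by the meridian. It therefore suffices to check that the two steps turning $V$ into $W$ leave $H_1$ unchanged. The $2$-handles are attached along the components of $L$, and each such component is a curve on the Seifert surface $\Sigma$ for $J=nK$, hence has zero linking number with $J$ and is null-homologous in $M(J)=\bdry_+V$; attaching a $2$-handle along a null-homologous curve does not change $H_1$. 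The final $3$-handle is attached along a $2$-sphere and so cannot affect $H_1$ either. Thus $H_1(V)\to H_1(W)$ is an isomorphism, and composing with the isomorphism of Lemma~\ref{H1 isom} proves part (1).

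For part (2) I would set up exactly the commutative square of Lemma~\ref{kernel is isotropic}: Poincar\'e--Lefschetz duality identifies $H_2(W,\bdry W;R_p)$ with $H^2(W;R_p)$, the Bockstein $B\colon H^1(W;\Q(t)/R_p)\to H^2(W;R_p)$ of the coefficient sequence $0\to R_p\to\Q(t)\to\Q(t)/R_p\to 0$ enters, and the boundary Blanchfield form is realized as $\kappa\circ B^{-1}\circ P.D.$ down the right-hand column, so that writing $x=\bdry_*X$ and $y=\bdry_*Y$ for elements of the kernel one computes $Bl(x,y)=(\dots)(i_*\bdry_*Y)=0$ from the identity $i_*\bdry_*=0$. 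The first thing to verify is the input vanishing. A handle-by-handle computation with $\Q(t)$ coefficients, starting from $H_*(V;\Q(t))=0$ and tracking the $g$ two-handles and the single three-handle, gives $H_1(W;\Q(t))=0$, equivalently $H^1(W;\Q(t))=0$, so that $B$ is at least injective.

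The essential new difficulty — and the point at which the slice case of Lemma~\ref{kernel is isotropic} does not transcribe verbatim — is that here $H_2(W;\Q(t))$ does not vanish: the same handle computation gives $H_2(W;\Q(t))\cong\Q(t)^{g-1}$, which is exactly the rank that Lemma~\ref{bound} will convert into the estimate $g-1$. Consequently $B$ is injective but not surjective; it is an isomorphism only onto the torsion submodule $TH^2(W;R_p)$. I would therefore carry out the diagram chase entirely within the torsion subquotients, applying $B^{-1}$ only to torsion classes. For this to close I must lift each $x\in Q$ — which is $R_p$-torsion, since $A_0^p(K)$ is a torsion module — to an $R_p$-torsion class $X\in H_2(W,\bdry W;R_p)$ with $\bdry_*X=x$, so that $P.D.(X)\in TH^2(W;R_p)=\im B$ and $B^{-1}(P.D.(X))$ is defined. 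I expect producing this torsion lift, and thereby accommodating $H_2(W;\Q(t))\neq0$, to be the main obstacle; it is precisely the analysis of the cobordism $W-V$ carried out in \cite[8.1]{derivatives}. Once the lifts $X,Y$ are chosen torsion, the remainder of the chase — evaluating the boundary Blanchfield form on the $M(K)$ component, where the boundary Bockstein genuinely is an isomorphism because $H_*(M(K);\Q(t))=0$ — proceeds exactly as in Lemma~\ref{kernel is isotropic} and yields $Bl(x,y)=0$, so that $Q$ is isotropic.
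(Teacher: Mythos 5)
Your part (1) is correct and is essentially the paper's argument: reduce to $V$ via Lemma~\ref{H1 isom}, then observe that the $2$-handles are attached along curves that are null-homologous in $\bdry_+V$ (being curves on a Seifert surface for $J$) and that a $3$-handle cannot change $H_1$.

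For part (2), however, you have taken a genuinely different route from the paper, and your route contains a gap that you yourself flag but do not close. Your plan is to rerun the Poincar\'e duality/Bockstein chase of Lemma~\ref{kernel is isotropic} on the $W$ of Construction~\ref{W for computation}. As you correctly observe, $H_2(W;\Q(t))\cong\Q(t)^{g-1}\neq 0$ there, so $B$ is an isomorphism only onto the torsion of $H^2(W;R_p)$, and the chase only closes if each kernel element $x$ lifts under $\bdry_*$ to an $R_p$-\emph{torsion} class $X\in H_2(W,\bdry W;R_p)$. Exactness of the pair sequence gives you \emph{some} lift $X$, but $H_2(W,\bdry W;R_p)\cong H^2(W;R_p)$ now has a nontrivial free part, and nothing you have said forces the lift to be torsion or allows you to correct it to one; saying that this ``is precisely the analysis of the cobordism $W-V$ carried out in \cite[8.1]{derivatives}'' is a pointer, not an argument. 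As written, the proof of part (2) is incomplete at exactly the step you identify as the main obstacle.

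The paper sidesteps duality on $W$ entirely. Since $W$ is obtained from $V$ by attaching $2$-handles along the components of $L$ (and a $3$-handle), the map $A_0^p(K)\to A_0^p(W)$ factors as the inclusion of the $i$-th summand $x\mapsto(x,0,\dots,0)$ into $A_0^p\left(\underset{n}{\#}K\right)=\underset{n}{\oplus}A_0^p(K)$ followed by the quotient by the submodule $P$ generated by the lifts of the components of $L$; so $x$ lies in the kernel exactly when $(x,0,\dots,0)\in P$. Lemma~\ref{metabolic to Lagrangian} then shows by a direct computation with the Seifert matrix (using the formula $Bl(\vec r,\vec s)=(1-t)\vec s^{\,T}(V-tV^T)^{-1}\vec r$ and the block form of $V$ forced by $L$ being a metabolizer) that $P$ is isotropic for the Blanchfield form on $A_0^p\left(\underset{n}{\#}K\right)$, which is the $n$-fold orthogonal sum of the form on $A_0^p(K)$; hence $Bl_K(x,y)=Bl_{\#K}((x,0,\dots,0),(y,0,\dots,0))=0$. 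This is both more elementary and immune to the nonvanishing of $H_2(W;\Q(t))$. I would either adopt that argument or supply the missing torsion-lifting statement before relying on your version.
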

\begin{proof}

The inclusion from $M(K)$ into $V$ induces a first homology isomorphism, as was observed in the proof of Lemma~\ref{H1 isom}.  $W$ is obtained by adding 2-cells to null-homologous curves in $V$ and then adding a 3-cell.  Neither of these operations changes first homology so the inclusion from $M(K)$ into $W$ induces a first homology isomorphism, which proves (1).

If $x,y\in A_0^p(K)$ are in the kernel of the inclusion induced map, then $$(x,0,0,\dots,0), (y,0,0,\dots,0) \in \underset{n}{\oplus} A^p_0(K)= A_0^p\left(\underset{n}{\#} K\right)$$ are in the submodule $P$ generated by the homology classes of the lifts of the components of $L$.  
%Being generated by a set of metabolizing curves for $\iterate{\#}{n}{}K$, 
By Lemma~\ref{metabolic to Lagrangian}, which is stated and proved at the end of this section, $P$ is isotropic with respect to the linking form on $A_0^p\left(\iterate{\#}{n}{} K\right)$, which is precisely the n-fold direct sum of the linking form of $A_0^p(K)$ with itself. 

Thus, $Bl_K(x,y)=Bl_{\# K}((x,0,\dots,0),(y,0,\dots,0)) = 0$ and the kernel of the inclusion induced map is isotropic with respect to Blanchfield linking.

\end{proof}

%theorem 2.2
\begin{theorem}\label{H1 injection with coeff}
\vspace{.1in}
\noindent\begin{enumerate}
\item If $K$ is a $p$-anisotropic knot, then the map from $\pquotient{M(K)}{p}$ to $\pquotient{W}{p}$ induced by the inclusion of any one of the $M(K)$-components of $\bdry_- W$ is injective.

\item Let $m_i$ be the meridian about the band of $\Sigma$ on which $L_i$ sits and $P$ be the submodule of $A_0^p(J)$ generated by $L$. If $m_1,\dots, m_n$ are $\Z$-linearly as elements of $A_0^p(J)/P$, then the inclusion of $M(L) = \bdry_+ W$ into $W$ induces an injective map from $\nquotient{M(L)}{1}$ to $\pquotient{W}{p}$
\end{enumerate}
\end{theorem}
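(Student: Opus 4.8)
The plan is to handle the two parts separately; part (1) is a direct application of the machinery already assembled, while part (2) carries the real content. For part (1), I would invoke Lemma~\ref{repeated use lemma} with the boundary component $M(K)$. Its two hypotheses are supplied verbatim by Lemma~\ref{H2 by bdry}: the inclusion induces an isomorphism $H_1(M(K)) \to H_1(W)$, and the kernel of $A_0^p(K) \to A_0^p(W)$ is isotropic. Since $K$ is $p$-anisotropic, $A_0^p(K)$ has no nontrivial isotropic submodule, so this kernel vanishes and $A_0^p(K) \to A_0^p(W)$ is injective. Lemma~\ref{repeated use lemma} then yields that $\pquotient{M(K)}{p} \to \pquotient{W}{p}$ is injective, which is exactly (1).

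For part (2) I want to show $\nquotient{M(L)}{1} = H_1(M(L)) \to \pquotient{W}{p}$ is injective. First I would locate the image of $H_1(M(L)) = \Z^g$ inside the filtration of $\pquotient{W}{p}$. Each generator $\ell_i$ of $H_1(M(L))$ is a small meridian of the component $L_i$ and so has linking number zero with $J$; hence $\ell_i$ is null-homologous in $M(J)$ and therefore in $W$, and the composite $H_1(M(L)) \to H_1(W) = \nquotient{W}{1}$ is zero. Feeding this into the exact sequence $0 \to \onepquotient{W}{p} \to \pquotient{W}{p} \to \nquotient{W}{1} \to 0$ from the diagram of Lemma~\ref{repeated use lemma}, together with the injection $\onepquotient{W}{p} \hookrightarrow A_0^p(W)$ built into Definition~\ref{rho1p}, shows that the image of $\pi_1(M(L))$ lies in the abelian subgroup $A_0^p(W)$. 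Thus the map factors through the abelianization $H_1(M(L))$, and it suffices to prove the resulting map $H_1(M(L)) \to A_0^p(W)$ is injective, i.e. that the classes of the $\ell_i$ are $\Z$-linearly independent.

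To compute $A_0^p(W)$ I would read off the handle structure of Construction~\ref{W for computation}. The inclusion $M(J) = \bdry_+ V \hookrightarrow V$ induces an isomorphism $A_0^p(J) \to A_0^p(V)$, just as in the proof of Theorem~\ref{big theorem 1}; passing from $V$ to $W$ attaches one $2$-handle along each (null-homologous, hence lifting) component $L_i$ and a single $3$-handle. An $R_p$-coefficient Mayer--Vietoris computation then shows the $2$-handles kill precisely the submodule $P = \langle L \rangle$ generated by the lifts of the $L_i$, while the $3$-handle leaves the first homology of every cover unchanged, so $A_0^p(W) \cong A_0^p(J)/P$. Under this identification the class of $\ell_i$ becomes the class of the band meridian $m_i$ of Definition~\ref{band}: since $m_i$ bounds a disk meeting $\Sigma$ in an arc crossing $L_i$ transversely once, it is a meridian of $L_i$ and represents $\overline{m_i} \in A_0^p(J)/P = A_0^p(W)$. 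The hypothesis that the $m_i$ are $\Z$-linearly independent in $A_0^p(J)/P$ is then exactly the statement that the classes of the $\ell_i$ are $\Z$-linearly independent in $A_0^p(W)$, which gives injectivity of $\Z^g = H_1(M(L)) \to A_0^p(W) \subseteq \pquotient{W}{p}$ and completes (2).

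The main obstacle is the pair of identifications in the last paragraph: that the $2$-handle attachments induce the quotient $A_0^p(J) \to A_0^p(J)/P$ on localized Alexander modules (with the $3$-handle harmless), and that the surgery dual $\ell_i$ is carried to the band meridian $m_i$. These require the careful covering-space bookkeeping — tracking the lifts of the $L_i$ and of the $m_i$ to the infinite cyclic cover and verifying the $R_p$-coefficient Mayer--Vietoris sequence degenerates as claimed. It is reassuring that $W - V$ here coincides with the manifold called $E$ in \cite[8.1]{derivatives}, so the needed homological facts are available in that reference; once they are granted, the remainder of the argument is formal.
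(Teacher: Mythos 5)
Your proposal is correct and follows essentially the same route as the paper: part (1) is Lemma~\ref{H2 by bdry} plus anisotropy feeding into Lemma~\ref{repeated use lemma}, and part (2) proceeds by showing the image of $\pi_1(M(L))$ lands in $\onepquotient{W}{p}\hookrightarrow A_0^p(W)=A_0^p(J)/P$ and identifying the meridians of the $L_i$ with the band meridians $m_i$, whose $\Z$-linear independence is the hypothesis. The only cosmetic differences are that you argue the vanishing of $H_1(M(L))\to H_1(W)$ by linking numbers and sketch the Mayer--Vietoris computation of $A_0^p(W)$, where the paper simply cites \cite[prop 8.1 (3), (5)]{derivatives} for both facts.
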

\begin{proof}

By Lemma~\ref{H2 by bdry} part 2 the kernel of this map on $A_0^p$ is isotropic, but by assumption the only such submodule of $A_0^p(K)$ is zero so the inclusion induced map \begin{equation}A_0^p(K)\hookrightarrow A_0^p(W)\end{equation} is a monomorphism.  From this together with Lemma~\ref{H2 by bdry} part 1 and Lemma~\ref{repeated use lemma} it follows that the induced map \begin{equation}\pquotient{M(K)}{p} \to \pquotient{W}{p}\end{equation} is a monomorphism, completing the proof of (1).

By \cite[prop 8.1 (5)]{derivatives} The inclusion induced map from $H_1(M(L))$ to $H_1(W)$ is trivial.  Thus, the inclusion induced map sends $\pi_1(M(L))$ to $\pi_1(W)^{(1)}$.  Consider the composition 
\begin{equation}\label{composition}H_1(M(L))\to\dfrac{\pi_1(W)^{(1)}}{\pi_1(W)^{(2)}}\to\onepquotient{W}{p}\to  A_0^p(W) = A_0^p(J)/P\end{equation}
By \cite[prop 8.1 (3)]{derivatives} the generators of the left hand side of (\ref{composition}) (meridians of the components of $L$) are isotopic in $W$ to the meridians of the bands on which the components of $L$ sit.  By assumption, the latter form a $\Z$-linearly independent set on the right hand side of (\ref{composition}), so this composition is injective.  In particular this means that the composition of the two leftmost terms in (\ref{composition}) is injective, so $H_1(M(L))\to\onepquotient{W}{p}\subseteq \pquotient{W}{p}$ is a monomorphism which completes the proof.

%of the bands of the Seifert surface of $\iterate{\#}{n}{} K$ on which the components of $J$ sit.  \textcolor{green}{So,  if $J_k$ is the element of $A_0\left(\iterate{\#}{n}{} K\right)$ representing the corresponding component of $L$, then $Bl(m_i, J_k) = \delta_{i,k}$}.  This implies that this map must be injective.
%curves in $W$ homotopic to curves in $M\left(\iterate{\#}{n}{} K\right)$ which which link once with one component of $L$ and not at all with any others.  So the intersection form on ... insists that this map be injective
\end{proof}

Theorem~\ref{H1 injection with coeff} implies 
\begin{equation}\label{use W}
\sigma^{(2)}(W,\phi) - \sigma(W) = n \rho^1_p(K)-\rho^0(L).
\end{equation}  
Lemma~\ref{bound} bounds the signatures on the left hand side of this equation.  It is independent of any anisotropy assumption.  Before we can address this lemma we must provide a definition for the Alexander nullity of a link.

In \cite[Definition 7.3.1]{Ka3} the \textbf{Alexander nullity} of an $m$-component link $L$ is defined as \begin{equation*}\eta(L)=\rank_{\Q(\Z^n)}(H_1(E(L),x;\Q(\Z^n))-1\end{equation*} where $E(L)=S^3-L$ is the exterior of the link, $x$ is a point in $E(L)$ and the homology is twisted by the abelianization map.  A study of the long exact sequence of $(E(L),x)$ reveals that \begin{equation*}\eta(L)=\rank_{\Q(\Z^n)}(H_1(E(L);\Q(\Z^n)).\end{equation*}  In the case of a link with pairwise zero linking a Mayer Veitoris argument reveals that \begin{equation}\label{eta}\eta(L)=\rank_{\Q(\Z^n)}(H_1(M(L);\Q(\Z^n)).\end{equation}  The last of these interpretations is most convenient for our purposes.

\begin{lemma}\label{bound} For the 4-manifold $W$ given in Construction~\ref{W for computation} and the quotient map $\phi:\pi_1(W)\to\pquotient{W}{p}$.  

\vspace{.1in}
\noindent\begin{enumerate}
\item$\sigma(W)=0$ 
\item$\left|\sigma^{(2)}(W,\phi) \right| \le g-1-\eta(L)$ where $\eta(L)$ is the Alexander nullity of $L$
\end{enumerate}
\end{lemma}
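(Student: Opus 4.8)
The plan is to establish the two claims of Lemma~\ref{bound} separately, using the structure of $W$ from Construction~\ref{W for computation} together with the signature bound~\eqref{inequality}. First I would dispatch part (1): the regular signature $\sigma(W)=0$ follows from showing that the ordinary integral second homology of $W$ is carried by $\bdry W$, exactly as in Lemma~\ref{second homology by bdry}. The manifold $W$ is built from $V$ (whose $H_2$ is carried by its boundary) by attaching $g$ two-handles along the components of $L$ and then one three-handle. Adding the two-handles along the zero framings of $L$ and the three-handle changes the second homology in a controlled way; a Mayer--Vietoris or handle-decomposition bookkeeping argument should show that every class in $H_2(W;\Z)$ is represented on the boundary, hence the intersection form vanishes and $\sigma(W)=0$.

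For part (2), the plan is to bound $|\sigma^{(2)}(W,\phi)|$ via inequality~\eqref{inequality}, which gives
\begin{equation*}
\left|\sigma^{(2)}(W,\phi)\right| \le \rank_{\K}\left(\dfrac{H_2(W;\Q[\Gamma])}{i_*\left[H_2(\bdry W;\Q[\Gamma])\right]}\right),
\end{equation*}
with $\Gamma = \pquotient{W}{p}$. So the task reduces to bounding the $\Q[\Gamma]$-rank of this quotient by $g-1-\eta(L)$. The strategy is to compute $H_2(W;\Q[\Gamma])$ from the handle structure. Since $W-V$ is the manifold called $E$ in \cite[8.1]{derivatives}, I would lean on the homological computations there: the $g$ two-handles contribute at most $g$ generators to $H_2$ relative to the boundary, and the three-handle kills one class. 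This accounts for the $g-1$. The subtraction of $\eta(L)$ should come from the boundary contribution: the image $i_*[H_2(\bdry W;\Q[\Gamma])]$ is larger precisely when $H_1(M(L))$ with twisted coefficients is larger, and the Alexander nullity $\eta(L)$ measures exactly this via~\eqref{eta}. Concretely, I expect that $\eta(L)$ of the two-handle-generated classes become carried by the $\bdry_+ W = M(L)$ boundary component, so they drop out of the quotient.

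The main obstacle I anticipate is the careful accounting of the twisted homology $H_2(W;\Q[\Gamma])$ modulo the boundary image, and in particular correctly extracting the $\eta(L)$ term. The subtlety is that the coefficient system $\Gamma=\pquotient{W}{p}$ is metabelian rather than abelian, so the relationship between the $\Q[\Gamma]$-rank and the abelian Alexander nullity $\eta(L)$ (which is defined with $\Q(\Z^n)$-coefficients) is not a priori transparent. I would navigate this by comparing the twisted chain complex of the handle decomposition to the corresponding untwisted count, showing that the rank of the relevant two-handle contribution over $\K$ agrees with $g$ minus the number of independent relations coming from $H_1(M(L);\Q(\Z^n))$, which is $\eta(L)+1$; the ``$+1$'' together with the three-handle yields the full $g-1-\eta(L)$ bound. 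Getting the linear algebra of this comparison exactly right — ensuring no double-counting between the three-handle contribution and the nullity term — is where the real work lies, and I would verify it against the explicit computation of $E$ in \cite{derivatives}.
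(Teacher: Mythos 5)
Your overall framework --- part (1) from the intersection form, part (2) from inequality~\eqref{inequality} by bounding $\rank_\K\left(H_2(W;\K)/i_*\left[H_2(\bdry W;\K)\right]\right)$ by $g-1-\eta(L)$ --- is the same as the paper's, but in both parts the step that actually does the work is missing or misdirected. For (1), you propose to show that $H_2(W;\Z)$ is carried by $\bdry W$; that is stronger than what is needed and not what happens: the $g$ two-handles attached along the components of $L$ contribute classes to $H_2(W)$ that are not obviously boundary classes. The actual one-line reason is that the intersection form on this new part is the $g\times g$ zero matrix, because the handles are attached along the \emph{zero} framings of curves with pairwise zero linking numbers; hence $\sigma(W)=0$. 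Your ``controlled bookkeeping'' never isolates this fact, which is the entire content of part (1).

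For (2), the gap is precisely the accounting you defer as ``where the real work lies.'' The sketch you offer ($g$ minus ``$\eta(L)+1$ independent relations,'' with the three-handle somehow also contributing a $-1$) is internally inconsistent and would double-count. The paper sidesteps a generators-and-relations computation entirely with an Euler characteristic argument: $(W,V)$ has $g$ relative two-handles and one relative three-handle and $\chi(V)=0$, so $\chi(W)=g-1$; then $H_0(W;\K)=H_1(W;\K)=0$ by \cite[Propositions 3.7 and 3.10]{C}, $H_4(W;\K)=0$ since $W$ has the homotopy type of a $3$-complex, and $H_3(W;\K)=0$ from the long exact sequence of $(W,\bdry W)$ and Poincar\'e duality; the alternating sum then forces $\rank_\K H_2(W;\K)=g-1$. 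The $\eta(L)$ term is handled separately: by Theorem~\ref{H1 injection with coeff}(2) the coefficient system restricted to $\bdry_+W=M(L)$ factors injectively through $\Z^n$, so $H_1(\bdry W;\K)=H_1(M(L);\Q(\Z^n))\otimes\K=\K^{\eta(L)}$, Poincar\'e duality gives $H_2(\bdry W;\K)\cong\K^{\eta(L)}$, and $H_3(W,\bdry W;\K)=0$ makes $i_*$ injective on $H_2$, so the quotient has rank exactly $(g-1)-\eta(L)$. Your worry about the metabelian versus abelian coefficient mismatch is legitimate, and it is resolved by exactly this factorization; without it, and without the vanishing results above, your chain-level comparison cannot be completed.
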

\begin{proof}
To see the first claim notice that the homology of W which is not carried by the $M_i$ components of $\bdry_- W$ is generated by 2-handles attached to $M(K)$ along the zero framings of $g$ curves which have zero linking numbers with each other.  The intersection form is thus given by the $g\times g$ zero matrix, which proves (1).

Let $\K$ be the classical field of fractions of the Ore domain $\Q\left[\pquotient{W}{p}\right]$.  To see the second claim we show that \begin{equation*}\rank_\K \left(\dfrac{H_2\left(W;\K\right)}{i_*\left[H_2\left(\bdry W;\K\right)\right]}\right) =  g-1-\eta(L).\end{equation*}

The pair $(W,V)$ consists of g relative 2-handles and 1 relative 3-handle so $\chi(W)-\chi(V)=g-1$.  Since $V$ has the homotopy type of a (disconnected) closed 3-manifold together with a $n$ 1-cells and $n$ 2-cells, $\chi(V)=0$.  It must be that $\chi(W)=g-1$.  By \cite[Proposition 3.7]{C}, $H_0(W;\K)=0$.  By \cite[Proposition 3.10]{C}, $H_1(W;\K)=0$.  $W$ has the homotopy type of a 3-complex, so $H_4(W;\K)=0$.  Consider the long exact sequence of the pair $(W,\bdry W)$,
$$H_3(\bdry W;\K)\to H_3(W;\K)\to H_3(W,\bdry W;\K).$$
Employing Poincar\'e duality and the universal coefficient theorem over the skew field $\K$, $H_3(\bdry W;\K) = H_0(\bdry W;\K)=0$ and $H_3(W,\bdry W;\K)=H_1(W;\K)=0$.  Thus, $H_3(W;\K)=0$.

Since the alternating sum of the ranks of twisted homology gives the Euler characteristic, $\rank_\K \left(H_2(W;\K)\right) = \chi(W) = g-1$.

By Theorem~\ref{H1 injection with coeff} part 2, $$H_1(\bdry_+ W;\K) = H_1(M(L);\Q(\Z^n))\otimes \K =\K^{\eta(L)},$$
so since $H_1(\bdry_-(W);\K) = 0$, $H_1(\bdry W;\K) = \K^{\eta(L)}$.  By Poincar\'e duality, $H_2(\bdry W;\K)=H_1(\bdry W;\K)=\K^{\eta(L)}$.  

Since $H_3(W,\bdry W;\K)=0$, the exact sequence of the pair indicates that $i_*:H_2(\bdry W;\K)\to H_2(W;\K)$ is a monomorphism.

Thus, \begin{eqnarray*}\rank_\K \left(\dfrac{H_2\left(W;\K\right)}{i_*\left[H_2\left(\bdry W;\K\right)\right]}\right) &=& \rank_\K \left(H_2(W;\K)\right)-\rank_\K \left(H_2(\bdry W;\K)\right)\\ &=& (g-1)-\eta(L).\end{eqnarray*}

Finally, $\left|\sigma^{(2)}(W)\right| \le \rank_\K \left(\dfrac{H_2\left(W;\K\right)}{H_2\left(\bdry W;\K\right)}\right)$ by inequality~\eqref{inequality}, which completes the proof.

%To see the second claim notice that the twisted second homology of $W$ which is not carried by the boundary of $W$ is generated by the $ng$ 2-handles attached along the metabolizer.  Thus, $\displaystyle \left|\sigma^{(2)}(W)\right| \le \rank \left(\dfrac{H_2\left(W;\K\right)}{H_2\left(\bdry W;\K\right)}\right) \le ng$
\end{proof}

Now by equation~\ref{use W}, $\sigma^{(2)}(W,\phi)-\sigma(W) = n\rho^1_p(K) - \rho^0(L)$.  Using the bound obtained in Lemma~\ref{bound} the theorem below is proven. 

%\textcolor{red}{The ref. thinks that the g=1 case appears in the COT derivatives paper.  Probably the comparison should at least be pointed out again here.}

\begin{theorem}\label{premain}
Let $K$ be a $p$-anisotropic knot of finite algebraic order $n>1$.  Let $\Sigma$ be a genus $g$ Seifert surface for $\underset{n}{\#}K$.  Let $L$ be a link of $g$ curves on $\Sigma$ representing a metabolizer for the Seifert form.   Let $P$ be the submodule of  $A^p_0\left({\#}K\right)$ generated by $L$.  Suppose that the meridians about the bands on which the components of $L$ sit form a $\Z$-linearly independent set in $A_0^p\left({\#}K\right)/P$.  Then
$$ \left| n\rho^1_p(K)-\rho^0(L)\right| \le g-1-\eta(L).$$
\end{theorem}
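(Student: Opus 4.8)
The plan is to read the bound directly off the signature defect of the $4$-manifold $W$ of Construction~\ref{W for computation}, for which all the structural work has already been carried out in the preceding lemmas. Concretely, I would take $W$ together with its quotient coefficient system $\phi:\pi_1(W)\to\pquotient{W}{p}$ and recognise it as a stable null-bordism, in the sense of Definition~\ref{rho}, for the disjoint union of the $n$ copies of $(M(K),\phi^1_p)$ comprising $\bdry_- W$ and the component $(M(L),\phi^0)$ given by $\bdry_+ W$. Keeping track of the orientation with which the $M(L)$-piece sits in $\bdry W$ produces the signed combination $n\rho^1_p(K)-\rho^0(L)$ on the left-hand side of Definition~\ref{rho}; this is precisely the content of equation~\eqref{use W}, namely $\sigma^{(2)}(W,\phi)-\sigma(W)=n\rho^1_p(K)-\rho^0(L)$.

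To license the application of Definition~\ref{rho} I must check that $\phi$ restricts to $\phi^1_p$ on each $M(K)$-component and to $\phi^0$ on the $M(L)$-component, and that it does so through monomorphisms $\alpha_i$. This is exactly where the two hypotheses enter. The $p$-anisotropy of $K$ feeds into Theorem~\ref{H1 injection with coeff} part (1), making each inclusion-induced map $\pquotient{M(K)}{p}\to\pquotient{W}{p}$ injective; and the assumed $\Z$-linear independence of the band meridians in $A_0^p(\#K)/P$ feeds into part (2), which not only gives injectivity of $\nquotient{M(L)}{1}\to\pquotient{W}{p}$ but, by sending $\pi_1(M(L))$ into $\pi_1(W)^{(1)}$, shows the restricted system factors through the abelianization $H_1(M(L))$ and hence agrees with $\phi^0$. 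Thus the required monomorphisms exist and equation~\eqref{use W} is valid.

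With the identity in hand the bound is immediate from Lemma~\ref{bound}: part (1) gives $\sigma(W)=0$, so that $n\rho^1_p(K)-\rho^0(L)=\sigma^{(2)}(W,\phi)$, and part (2) gives $\bigl|\sigma^{(2)}(W,\phi)\bigr|\le g-1-\eta(L)$. Combining these yields $\bigl|n\rho^1_p(K)-\rho^0(L)\bigr|\le g-1-\eta(L)$, as claimed. I expect no essential obstacle at the level of the theorem itself, since the genuine content has already been discharged in the lemmas: the injectivity of the coefficient systems (resting on the isotropy statement of Lemma~\ref{H2 by bdry} together with the anisotropy hypothesis) and the Euler-characteristic computation of $\rank_\K\bigl(H_2(W;\K)/i_*[H_2(\bdry W;\K)]\bigr)=g-1-\eta(L)$ in Lemma~\ref{bound}. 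The only point demanding real care is the orientation bookkeeping that fixes the sign of the $\rho^0(L)$ term, which is what turns the naive sum of boundary $\rho$-invariants into the difference $n\rho^1_p(K)-\rho^0(L)$.
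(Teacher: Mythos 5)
Your proposal follows the paper's own argument exactly: it assembles equation~\eqref{use W} from Construction~\ref{W for computation}, Theorem~\ref{H1 injection with coeff} (with the anisotropy and linear-independence hypotheses entering precisely where you place them), and Definition~\ref{rho}, then concludes via the two parts of Lemma~\ref{bound}. No gaps; this is the intended proof.
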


All of the results of this section could be rephrased to deal with the case that $K_1\dots K_n$ are (possibly distinct) $p$-anisotropic knots and  $J=\iterate{\#}{i=1}{n}K_i$ is algebraically slice with a metabolizer $L$.  Following the exact same argument one gets a stronger theorem. 

\begin{theorem}\label{postmain}%The proof of this is just in the lines above%
Suppose that $K_1\dots K_n$ are (not necessarily distinct) $p$-anisotropic knots and $K=\iterate{\#}{i=1}{n}K_i$ is algebraically slice.  Let $\Sigma$ be a genus $g$ Seifert surface for $K$.  Let $L$ be a link of $g$ curves on $\Sigma$ representing a metabolizer for the Seifert form.  Let $P$ be the submodule of $A_0^p(K)$ generated by $L$.  Suppose that the meridians about the bands on which the components of $L$ sit form a $\Z$-linearly independent set in $A_0^p(K)/P$, where $P$ is the submodule of $A_0^p(K)$ generated by $L$.  Then $$\displaystyle \left| \sum_{i=1}^{n} \rho_p^1(K_i)-\rho^0(L)\right| \le g-1-\eta(L).$$
\end{theorem}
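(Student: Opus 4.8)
The plan is to observe that Theorem~\ref{postmain} is an immediate generalization of Theorem~\ref{premain}, obtained by relaxing the single knot $K$ with $nK=\iterate{\#}{n}{}K$ to an arbitrary connected sum $K=\iterate{\#}{i=1}{n}K_i$ of (not necessarily equal) $p$-anisotropic knots, with $K$ only assumed algebraically slice. Accordingly, I would not reprove anything from scratch; instead I would rerun the argument already assembled in Construction~\ref{W for computation} through Lemma~\ref{bound}, keeping track of the fact that the individual summands are now permitted to differ. Concretely, I would replace $V$ by the analogous cobordism whose lower boundary is $\iterate{\sqcup}{i=1}{n}M(K_i)$ rather than $n$ copies of a single $M(K)$, and whose upper boundary is $M(K)$ with $K=\iterate{\#}{i=1}{n}K_i$; the gluing of $1$- and $2$-handles along meridians is identical, and all of Lemma~\ref{H1 isom}, Lemma~\ref{second homology by bdry}, and Lemma~\ref{twisted homology} go through verbatim since those arguments never used that the summands coincide.

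The only place where sameness of summands was genuinely exploited is Lemma~\ref{H2 by bdry}(2), which identified $A_0^p(\iterate{\#}{n}{}K)$ with the $n$-fold orthogonal direct sum of $A_0^p(K)$ and invoked Lemma~\ref{metabolic to Lagrangian} to conclude that the kernel of each inclusion $A_0^p(K_i)\to A_0^p(W)$ is isotropic. In the general setting one instead uses the canonical isometry
\begin{equation*}
A_0^p\!\left(\iterate{\#}{i=1}{n}K_i\right)\cong\iterate{\oplus}{i=1}{n}A_0^p(K_i),
\end{equation*}
which is an orthogonal sum of the individual Blanchfield forms; this is a standard additivity of the Blanchfield pairing under connected sum and holds regardless of whether the $K_i$ agree. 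With this in hand, Lemma~\ref{metabolic to Lagrangian} again shows that the submodule $P$ generated by the metabolizing link $L$ is isotropic, and the projection to each factor shows that the kernel of $A_0^p(K_i)\to A_0^p(W)$ is isotropic. Since each $K_i$ is $p$-anisotropic, these kernels vanish, so each inclusion is injective and Lemma~\ref{repeated use lemma} yields the injectivity of $\pquotient{M(K_i)}{p}\to\pquotient{W}{p}$, exactly as in Theorem~\ref{H1 injection with coeff}(1).

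From there the endgame is unchanged. Theorem~\ref{H1 injection with coeff}(2), whose hypothesis is literally the $\Z$-linear independence of the band meridians in $A_0^p(K)/P$, needs no modification, so the boundary coefficient systems embed and Definition~\ref{rho} applies to give
\begin{equation*}
\sum_{i=1}^n\rho^1_p(K_i)-\rho^0(L)=\sigma^{(2)}(W,\phi)-\sigma(W).
\end{equation*}
Lemma~\ref{bound} then bounds the right-hand side by $g-1-\eta(L)$, noting that its proof rests only on $\chi(W)=g-1$ and the vanishing of $H_0,H_1,H_3,H_4$ with $\K$-coefficients, none of which depend on the summands being identical. Combining these gives the claimed inequality. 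I do not expect a genuine obstacle here: the content is entirely in checking that every lemma in Section~\ref{computational tools} was proved without secretly using $K_i=K_j$, and the single substantive point is the orthogonal-direct-sum decomposition of the Blanchfield form under connected sum, which is classical. The main (very mild) care is to confirm that Lemma~\ref{metabolic to Lagrangian} as stated applies to a genuine direct sum of distinct forms rather than a single form summed with itself; if its statement is phrased for the latter, I would either invoke the obvious generalization or note that its proof is insensitive to this distinction.
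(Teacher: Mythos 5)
Your proposal is correct and matches the paper's own treatment: the paper proves Theorem~\ref{postmain} simply by observing that ``all of the results of this section could be rephrased'' for distinct $p$-anisotropic summands and that ``following the exact same argument one gets a stronger theorem.'' Your more detailed audit --- in particular noting that the only summand-dependence sits in the orthogonal direct-sum decomposition of the Blanchfield form used in Lemma~\ref{H2 by bdry}(2), and that Lemma~\ref{metabolic to Lagrangian} is already stated for an arbitrary knot so it applies to $K=\iterate{\#}{i=1}{n}K_i$ directly --- is a faithful elaboration of the same argument.
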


\begin{example} 
We provide an application of Theorem~\ref{postmain}.  Suppose that $L$ is a 2 component link with pairwise linking number zero and such that $\left|\rho^0(L)\right|>1-\eta(L)$.  Such links arise as by-products of the analysis in the next section.  Let $i$ be a positive integer.  The link $L$ is a metabolizer for the algebraically slice knot $K_i$ depicted in Figure~\ref{fig:antiderivative}.  Notice that $K_i$ is given by the connected sum of $J_i(L)$ and $J_i$ both of which have prime Alexander polynomials and so are anisotropic.   The meridians of the bands on which $L$ sit are depicted in Figure~\ref{fig:antiderivative}.  For some choice of basis of the Alexander module of $K_i$, they represent 
\begin{equation}
m_1=\left[\begin{array}{c}
1\\0
\end{array}\right],
m_2=\left[\begin{array}{c}
i(t-1)\\0
\end{array}\right].
\end{equation}
While the components of $L$ represent
\begin{equation}
l_1=\left[\begin{array}{c}
ti\\-i
\end{array}\right],
l_2=\left[\begin{array}{c}
i^2(1-t)\\i^2(t-1)
\end{array}\right]
\end{equation}
in  
\begin{equation}
\displaystyle A_0(K_i) = \dfrac{\Q[t^{\pm1}]}{(i^2t^2+(1-2i^2)t+i^2)} \oplus \dfrac{\Q[t^{\pm1}]}{(i^2t^2+(1-2i^2)t+i^2)}.
\end{equation}  Notice that $m_1, m_2, l_1, l_2$ form a $\Z$-linearly independent set.  

\begin{figure}[h]
\setlength{\unitlength}{1pt}
\begin{picture}(200,90)
\put(00,-65){\includegraphics[width=1\textwidth]{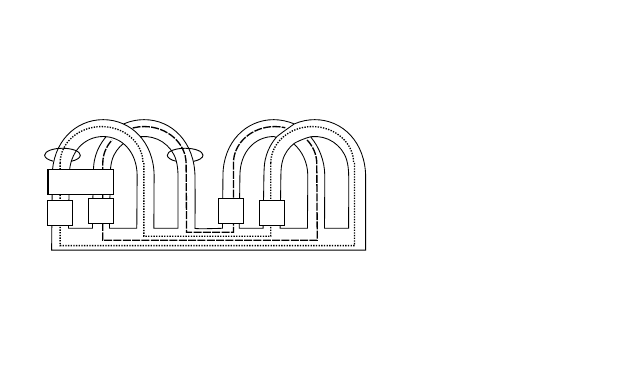}}
\put(60,0){$J_i(L)$}
\put(160,0){$J_i$}
\put(134,28){$i$}
\put(148,28){$-i$}
\put(32,28){$i$}
\put(51,28){$-i$}
\put(42,45){$L$}
\put(20,73){$m_1$}
\put(112,70){$m_2$}

\end{picture}
\caption{$J_i(L)\#J_i$ has the link $L$ as a derivative.  }\label{fig:antiderivative}
\end{figure}

Theorem~\ref{postmain} applies to give that $\rho^1(J_i(L))+\rho^1(J_i)$ is nonzero.  Since $J_i$ is of topological order 2, Theorem~\ref{big theorem 1} gives us that $\rho^1(J_i)=0$.  It must be that $\rho^1(J_i(L))$ is not zero.  For $i\neq k$ positive integers $J_i(L)$ and $J_k(L)$ have distinct prime Alexander polynomials so by Corolary~\ref{big corollary 1} the knots $J_i(L)$ $i>0$ are linearly independent in $\C$.
\end{example}

\subsection{Metbolizers of the Seifert surface and isotropy in the Alexander module}

In this sub-section we state and prove a fact used in the proof of Lemma~\ref{H2 by bdry}.  The proof relies on the formula in \cite[section 8]{BlDuality} for the unlocalized Blanchfield form in terms of the Seifert matrix.  

%:Edits made to this Lemma:
%It is used in a localized setting.  Previously the lemma was only stated in the unlocalized setting.  I changed the statement of the Lemma and added two paragraphs patching the old proof to give the stronger result.

\begin{lemma}\label{metabolic to Lagrangian}
Suppose that $\Sigma$ is a genus $g$ Seifert surface for a knot $K$ and $L = L_1\dots L_g$ is a $g$-component link sitting on $\Sigma$ which spans a rank $g$ direct summand of $H_1(\Sigma)$ on which the Seifert form vanishes.  
The submodule of $A_0^p(K)$ generated by the components of $L$ is isotropic.
\end{lemma}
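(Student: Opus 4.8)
The plan is to compute the Blanchfield form directly using its description in terms of the Seifert matrix, following \cite[section 8]{BlDuality}. Let $V$ denote the Seifert matrix of $\Sigma$ with respect to a basis $e_1,\dots,e_{2g}$ of $H_1(\Sigma)$, arranged so that $e_1,\dots,e_g$ are represented by the components $L_1,\dots,L_g$ of the link $L$. The hypothesis that $L$ spans a rank $g$ summand on which the Seifert form vanishes means precisely that the $g\times g$ upper-left block of $V$ is zero. The formula expresses the Blanchfield pairing on $A_0(K)=\coker\bigl((t-1)V - (t^{-1}-1)V^T\bigr)$ (up to the usual conventions) in terms of the inverse of this presentation matrix. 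I would first recall this formula explicitly, set up the block decomposition $V = \left(\begin{smallmatrix} 0 & B \\ C & D\end{smallmatrix}\right)$ induced by the splitting $H_1(\Sigma) = \langle L\rangle \oplus \langle \text{dual basis}\rangle$, and record that $V - V^T$ is the intersection form, hence unimodular.

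The key computation is that for two generators $\ell_i,\ell_j$ coming from $L$, the value $Bl(\ell_i,\ell_j)$ lands in the image of the presentation and is therefore zero in $\Q(t)/\Q[t^{\pm1}]$. Concretely, the Seifert-matrix formula gives $Bl$ as a matrix of the form $(t-1)\bigl((t-1)V-(t^{-1}-1)V^T\bigr)^{-1}$ (or an equivalent normalization), and one evaluates its upper-left $g\times g$ block. Because the upper-left block of $V$ (and of $V^T$) vanishes, a standard block-inverse manipulation shows that the corresponding block of the pairing matrix has entries lying in $\Q[t^{\pm1}]$ rather than being genuine fractions; equivalently the pairing values are zero modulo $\Q[t^{\pm1}]$. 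After establishing this integrality at the level of the classical module $A_0(K)$, I would then tensor with $R_p$: since localization is exact and the Blanchfield form on $A_0^p(K)$ is the localization of the classical form, and since $R_p\supseteq \Q[t^{\pm1}]$, any value already lying in $\Q[t^{\pm1}]$ maps to zero in $\Q(t)/R_p$ as well. Thus the submodule of $A_0^p(K)$ generated by $L$ is isotropic.

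\begin{proof}[Sketch of proof of Lemma~\ref{metabolic to Lagrangian}]
Choose a basis $e_1,\dots,e_{2g}$ of $H_1(\Sigma)$ with $e_1,\dots,e_g$ represented by the components of $L$, and let $V$ be the Seifert matrix in this basis, so by hypothesis the upper-left $g\times g$ block of $V$ is zero. By \cite[section 8]{BlDuality} the Blanchfield form on $A_0(K)$ is presented by the matrix $(t^{-1}-1)V^T - (t-1)V$, with the pairing of two classes computed via a designated inverse of this matrix times $(t-1)$. Writing everything in the induced block form, one checks that the resulting $g\times g$ block pairing the classes of $L_1,\dots,L_g$ with one another takes values in $\Q[t^{\pm1}]$, because the vanishing of the upper-left block of $V$ forces the relevant numerators to absorb the denominators. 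Hence $Bl(\ell_i,\ell_j)=0$ in $\Q(t)/\Q[t^{\pm1}]$ for all $i,j$, so the submodule generated by $L$ is isotropic in $A_0(K)$. Tensoring with $R_p$ (an exact functor, with $\Q[t^{\pm1}]\subseteq R_p$) shows these values remain zero in $\Q(t)/R_p$, so the submodule generated by $L$ in $A_0^p(K)$ is isotropic as claimed.
\end{proof}

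The main obstacle will be bookkeeping: getting the block-inverse manipulation of $(t^{-1}-1)V^T-(t-1)V$ correct and confirming that the vanishing upper-left block of $V$ really does force the paired values into $\Q[t^{\pm1}]$ rather than only into some intermediate module. One must be careful that the formula from \cite{BlDuality} is applied with the correct normalization and involution conventions, and that passing from the classical integrality statement to the localized statement uses only that $R_p$ contains $\Q[t^{\pm1}]$, so that the localization map $\Q(t)/\Q[t^{\pm1}]\to\Q(t)/R_p$ sends the already-vanishing values to zero.
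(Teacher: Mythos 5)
Your overall strategy (the Seifert-matrix formula for the Blanchfield pairing from \cite[section 8]{BlDuality}, the block decomposition of $V$ with vanishing upper-left block, and deferring the localization to the end) is the same as the paper's, and your final localization step is fine. But the key computation as you describe it has a genuine gap. In the Seifert presentation of $A_0(K)$, the generators are not the surface curves $L_1,\dots,L_g,D_1,\dots,D_g$ but the \emph{dual meridians} $\mu_1,\dots,\mu_g,\nu_1,\dots,\nu_g$ in $H_1(S^3\setminus\Sigma)$; the class of $L_i$ in $A_0(K)$ is the image of the $i$-th standard basis vector under the Seifert matrix $V$. So ``the upper-left $g\times g$ block of $(1-t)(V-tV^T)^{-1}$'' computes $Bl(\mu_i,\mu_j)$, not $Bl(\ell_i,\ell_j)$, and your proof never converts between the two. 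Worse, the integrality claim for that block is false in general: writing $V-tV^T=\left(\begin{smallmatrix}0&X\\Y&Z\end{smallmatrix}\right)$, its inverse is $\left(\begin{smallmatrix}-Y^{-1}ZX^{-1}&Y^{-1}\\X^{-1}&0\end{smallmatrix}\right)$, and the upper-left block $-Y^{-1}ZX^{-1}$ has denominators dividing the Alexander polynomial (already for genus $1$ with $V=\left(\begin{smallmatrix}0&2\\1&1\end{smallmatrix}\right)$ one gets a non-polynomial entry). It is the \emph{bottom-right} block of the inverse that vanishes.

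The correct argument, which is what the paper does, combines these two observations: since the upper-left block of $V$ is zero, $Ve_i=\left(\begin{smallmatrix}0\\ B\vec a\end{smallmatrix}\right)$ lies in the span of the $\nu_j$'s, and then
\begin{equation*}
Bl(Ve_i,Ve_j)=(1-t)\,(Ve_j)^T\,(V-tV^T)^{-1}\,(Ve_i)
=(1-t)\,(0,\ \vec b^{\,T}B^T)\left(\begin{smallmatrix}D&E\\F&0\end{smallmatrix}\right)\left(\begin{smallmatrix}0\\ B\vec a\end{smallmatrix}\right)=0
\end{equation*}
identically in $\Q(t)$ — not merely ``in $\Q[t^{\pm1}]$ hence zero mod $\Q[t^{\pm1}]$.'' You need the extra step of expressing the $\ell_i$ in the meridian generators via $V$ before the block structure does anything for you; without it the computation you propose both evaluates the wrong pairing and asserts an integrality that fails.
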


\begin{proof}%[Second proof of Lemma~\ref{metabolic to Lagrangian}]

%The proof will proceed by showing that the submodule of $A_0(K)$ generated by $L$ is isotropic.  Once this is shown then the sesquilinearity of the Blanchfield form will complete the proof in the localized setting.  For any \begin{equation}\alpha \otimes \frac{e}{f}, \beta \otimes \frac{g}{h}\end{equation}
%in the submodule of $A_0^p(K)$ generated by $L$, 
%\begin{equation}Bl(\alpha \otimes \frac{e}{f}, \beta \otimes \frac{g}{h}) = Bl(\alpha, \beta) \frac{e\overline g}{f \overline h}\end{equation}
%which is zero since $\alpha$ and $\beta$ are in the submodule of $A_0(K)$ generated by $L$.

We begin by proving that the submodule of the unlocalized Alexander module, $A_0(K)$, generated by $L$ is isotropic.  We call this submodule $P$.

Let the set $\{L_1\dots L_g\}$ be extended to  $\{L_1\dots L_g, D_1, \dots D_g\}$, a symplectic basis for $H_1(\Sigma)$.  Let $\mu_1, \dots \mu_g, \nu_1, \dots, \nu_g$ be the dual basis for $H_1(S^3 - \Sigma)$ given by meridians about the bands on which $L_i$ and $D_i$ sit.  The homology classes of the lifts of $\mu_1, \dots \mu_g, \nu_1, \dots, \nu_g$ to the infinite cyclic cover of $M(K)$ form a generating set for $A_0(K)$ as a $\Q$-vector space.  The map from $H_1(\Sigma)$ to the Alexander module induced by lifting $\Sigma$ to the cyclic cover of $M(K)$ is given with respect to these generating sets by the Seifert matrix $V$.  The Blanchfield form with respect to the generating set given by the lifts of $\mu_1, \dots \mu_g, \nu_1, \dots, \nu_g$ is given  by \begin{equation}\label{Blanchfield as seifert}Bl(\vec{r}, \vec{s})=(1-t)(\vec{s})^T(V-tV^T)^{-1}(\vec r)\end{equation} (see \cite[section 8]{BlDuality}).

Since $\{L_1\dots L_g\}$ is a metabolizer for the Seifert form, $V$ is given by a matrix of the form $\left[\begin{array}{cc}0&A\\B&C\end{array}\right]$, with respect to the basis $\{L_1\dots L_g, D_1, \dots D_g\}$ for $H_1(\Sigma)$ ($A,B,C$ are $g\times g$ matrices).  Thus, $(V-tV^T)^{-1}$ is given by a matrix with entries in $\Q[t^{\pm1}]$ of the form $\left[\begin{array}{cc}D&E\\F&0\end{array}\right]$, where $D,E,F$ are $g\times g$ matrices with polynomial entries.  

Consider any $\vec r=V\left[\begin{array}{c}\vec a\\0\end{array}\right]$, $\vec s=V\left[\begin{array}{c}\vec b\\0\end{array}\right]$ in $P$ ($a$ and $b$ are $g$-dimensional column vectors while $0$ denotes the $g$-dimensional zero vector).  Plugging these values into (\ref{Blanchfield as seifert}) we see
\begin{eqnarray*}
Bl(\vec r, \vec s)=(1-t)\left[\begin{array}{c}\vec b\\0\end{array}\right]^T\left[\begin{array}{cc}0&B^T\\A^T&C^T\end{array}\right]\left[\begin{array}{cc}D&E\\F&0\end{array}\right]\left[\begin{array}{cc}0&A\\B&C\end{array}\right]\left[\begin{array}{c}\vec a\\0\end{array}\right].
\end{eqnarray*}
This is zero by direct computation.  Since $P$ is the submodule generated by the lifts of $L_1\dots L_g$, this shows that $P$ is isotropic.

Now consider any $\alpha\otimes \frac{e}{f}$ and $\beta\otimes \frac{g}{h}$ in $P\otimes R_p$, the submodule of $A_0^p(K)\cong A_0(K)\otimes R_p$ generated by $L$.  By the sesquilinearity of the Blanchfield form, 
$$Bl(\alpha\otimes \frac{e}{f},\beta\otimes \frac{g}{h})=Bl(\alpha,\beta)\frac{e\overline{g}}{f\overline{h}},$$
where $Bl(\alpha,\beta)$ is given by the unlocalized Blanchfield form.  Since $\alpha$ and $\beta$ sit in $P\subseteq A_0$, which we have just shown to be isotropic, $Bl(\alpha,\beta)$ is zero so that $P\otimes R_p$ is isotropic.
\end{proof}
%%%%%%%%%%%%%%%%%%%%%%%%%%%%%%%%%%%%%%%%%%%%%%%%%%%%%%%
\section{An infinite family of twist knots of algebraic order $2$ with distinct prime Alexander polynomials and nonzero $\rho^1$-invariant.} \label{twist}

%%%%beginpaste\beginpaste\beginpastebeginpasetbeginpaste

Consider $T_n$ the n-twist knot depicted in Figure~\ref{fig:twist}.  The goal of this section is the proof of Theorem~\ref{rho twist}.

\begin{theorem}\label{rho twist}
Let $n(x) = -x^2-x-1$.  If $x\ge2$, then $\rho^1\left(T_{n(x)}\right)<0$.
\end{theorem}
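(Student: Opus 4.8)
The plan is to deduce Theorem~\ref{rho twist} from the computational bound of Theorem~\ref{premain} applied to $K = T_{n(x)}$, whose algebraic concordance order I claim is $2$. Set $p = \Delta_{T_{n(x)}}(t) = nt^2 - (2n+1)t + n$ with $n = n(x) = -x^2 - x - 1$. Since $4n+1 = -4x^2 - 4x - 3 < 0$, the quadratic $p$ has no real roots, so it is irreducible over $\Q$; being of degree $2$ it is symmetric and square-free. By Proposition~\ref{applies}, $T_{n(x)}$ is $p$-anisotropic, and by Proposition~\ref{rho prime}(2), $\rho^1_p(T_{n(x)}) = \rho^1(T_{n(x)})$. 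Granting algebraic order $2$, Theorem~\ref{premain} with $g = 2$ yields $\left|2\rho^1(T_{n(x)}) - \rho^0(L)\right| \le 1 - \eta(L)$ for a suitable metabolizing link $L$ on the genus-$2$ Seifert surface $\Sigma$ of $2T_{n(x)} = T_{n(x)} \# T_{n(x)}$. It therefore suffices to produce such an $L$ and to show $\rho^0(L) < \eta(L) - 1$, for then $2\rho^1(T_{n(x)}) \le \rho^0(L) + 1 - \eta(L) < 0$, as desired.

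First I would assemble the algebraic inputs. The twist knot $T_n$ has genus-one Seifert matrix $V = \left(\begin{smallmatrix} -1 & 1 \\ 0 & n\end{smallmatrix}\right)$, so $2T_{n(x)}$ has Seifert matrix $V \oplus V$ on a genus-$2$ surface $\Sigma$. Because $4n+1$ is not a perfect square, $V$ is not metabolic, so the Witt class $[V]$ is nonzero; the order is exactly $2$ precisely when $V \oplus V$ is metabolic. I would therefore solve for an explicit rank-$2$ summand $H \subset H_1(\Sigma;\Q)$ on which the Seifert form vanishes, a short linear/quadratic computation in the four basis curves $e_1, e_2, f_1, f_2$; its existence both pins the order at $2$ and produces the two-component link $L$ realizing a $\Z$-basis of $H$ by curves on $\Sigma$. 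Since $H$ is a metabolizer, the components of $L$ have vanishing pairwise linking numbers, so $\rho^0(L)$ is defined by Definition~\ref{rho0}. Finally I would check the hypothesis of Theorem~\ref{premain}, namely that the meridians $m_1, m_2$ about the bands carrying $L_1, L_2$ are $\Z$-linearly independent in $A_0^p(2T_{n(x)})/P$; as in the example surrounding Figure~\ref{fig:antiderivative}, this is a rank computation with the explicit column vectors for the $m_i$ and $L_i$ in $A_0^p(2T_{n(x)}) = (R_p/(p))^2$.

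The heart of the argument, and the step I expect to be the main obstacle, is the evaluation of $\rho^0(L)$. Using the expression of $\rho^0$ of a zero-linking-number link as the integral over $\T^2$ of the multivariable signature anticipated in the introduction, $\rho^0(L) = \int_{\T^2} \sigma_L(\omega_1,\omega_2)\,d\omega_1\,d\omega_2$, I would read a generalized (C-complex) Seifert matrix $H(\omega_1,\omega_2)$ off the explicit diagram of $L$ on $\Sigma$, so that $\sigma_L$ is the signature of the associated Hermitian matrix; when the components of $L$ are unknotted this reduces to a small, tractable family of matrices. The task is then to integrate this step function over the torus and to show the value is strictly below $\eta(L) - 1$. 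I expect $\eta(L) = 0$ (equivalently that $H_1(M(L);\Q(\Z^2))$ has rank one), reducing the goal to $\rho^0(L) < -1$. The dependence on $x$ enters through the twisting parameter $n(x)$: the region of $\T^2$ on which $\sigma_L$ is negative should grow with $|n(x)|$, and the crux is to bound the integral strictly below $-1$ uniformly for all $x \ge 2$, with the borderline case $x = 1$ (where $n = -3$) being exactly where the estimate is permitted to fail.

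Concluding, once $\rho^0(L) < \eta(L) - 1$ is established, the inequality $\left|2\rho^1(T_{n(x)}) - \rho^0(L)\right| \le 1 - \eta(L)$ forces $2\rho^1(T_{n(x)}) \le \rho^0(L) + 1 - \eta(L) < 0$, whence $\rho^1(T_{n(x)}) < 0$ for every $x \ge 2$. All the genuine difficulty is concentrated in the explicit signature integral of the previous paragraph; the remaining steps are bookkeeping with the Seifert matrix $V \oplus V$ together with applications of the already-proven Theorem~\ref{premain} and Propositions~\ref{applies} and \ref{rho prime}.
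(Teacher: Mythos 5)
Your reduction is the same as the paper's: exhibit an explicit metabolizer $L_x$ on the genus-$2$ Seifert surface of $T_{n(x)}\#T_{n(x)}$, check the $\Z$-linear independence of the band meridians in $A_0^p/P$, invoke Theorem~\ref{premain}, and thereby reduce everything to the single inequality $\rho^0(L_x)<-1$. That part is fine. The genuine gap is that you never establish this inequality: you correctly identify it as ``the heart of the argument'' and ``the main obstacle,'' and then propose to handle it by integrating a multivariable (C-complex) signature function over $\T^2$ and bounding the integral below $-1$ uniformly in $x$ --- but you do not produce the C-complex matrix, the signature function, or the bound. Since this is exactly the step where all the content of the theorem lives (everything else is bookkeeping with the Seifert matrix plus citations of already-proven results), the proposal as written is a plan for a proof rather than a proof. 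There is also a technical worry with the proposed route itself: for a two-component link the identification of $\rho^0(L)$ with an integral of Cimasoni--Florens-type signatures is not something the paper establishes (it only asserts such formulas hold ``in many cases''), so you would owe a justification of that identity, not just the integral estimate.

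The paper avoids this computation entirely by a different device. It proves a monotonicity lemma (Lemma~\ref{surgery}): $+1$ surgery along a nullhomologous curve does not increase $\rho^0$. Since $L_x$ is obtained from $L_{x-1}$ by such surgeries, $\rho^0(L_x)\le\rho^0(L_2)$ for all $x\ge 2$, killing the uniformity-in-$x$ problem in one stroke; then $L_2$ is exhibited as $+1$ surgery on the unlink along four nullhomologous curves, and dropping two of them gives a link $L'$ with $\rho^0(L_2)\le\rho^0(L')$. Finally $\rho^0(L')$ is computed exactly ($=-2$) from an explicit $4$-manifold: after a handle slide the twisted intersection form is the $1\times 1$ matrix $[t+t^{-1}]$, whose $L^2$-signature vanishes, while the ordinary signature is $2$. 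If you want to salvage your approach you must either carry out the torus integral with a uniform bound in $x$ (and justify the integral formula for two-component links), or adopt some analogue of the surgery monotonicity trick to collapse the infinite family to one finite computation.
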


From this theorem we get the immediate corollary.

\begin{corollary}\label{twist theorem}
The twist knots $T_{-7}, T_{-13}, T_{-21}, \dots, T_{-x^2-x-1}, \dots$ form a linearly independent set $\C$.
\end{corollary}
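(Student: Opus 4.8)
The plan is to reduce the corollary entirely to Corollary~\ref{big corollary 1} and then invoke it. That corollary asks for three things of the family $\{T_{n(x)}\}_{x\ge 2}$: that each member have finite algebraic order, that the Alexander polynomials $\Delta_{T_{n(x)}}$ be squarefree and pairwise coprime, and that $\rho^1(T_{n(x)})\neq 0$. The last condition is exactly Theorem~\ref{rho twist}, which supplies the strict inequality $\rho^1(T_{n(x)})<0$; so once the first two (purely algebraic) conditions are checked, the corollary follows mechanically.

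First I would pin down the Alexander polynomial. The $m$-twist knot satisfies $\Delta_{T_m}(t)\doteq m(t-1)^2+t$ (up to a unit of $\Q[t^{\pm1}]$), so $\Delta_{T_m}(1)=1$ and the discriminant of $\Delta_{T_m}$ is $1-4m$. Specializing to $m=n(x)=-x^2-x-1$ gives discriminant $1-4n(x)=(2x+1)^2+4$. For $x\ge 1$ this lies strictly between the consecutive squares $(2x+1)^2$ and $(2x+2)^2$, hence is a positive non-square, so $\Delta_{T_{n(x)}}$ is irreducible over $\Q$ and in particular squarefree (prime). For coprimality I would note that a common root $\zeta$ of $\Delta_{T_{n(x)}}$ and $\Delta_{T_{n(x')}}$ with $x\neq x'$ would, on subtracting, satisfy $(n(x)-n(x'))(\zeta-1)^2=0$, forcing $\zeta=1$; but $\Delta_{T_m}(1)=1\neq 0$. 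This rules out a common factor and re-derives the ``distinct prime Alexander polynomials'' assertion used in Subsection~\ref{subsect:examples}.

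Next I would establish finite algebraic order through the Levine--Tristram signatures. Since the discriminant $(2x+1)^2+4$ is positive, the two roots of $\Delta_{T_{n(x)}}$ are real, distinct and mutually reciprocal, so neither lies on the unit circle; the signature function $\omega\mapsto\sigma_\omega(T_{n(x)})$ is therefore locally constant on the connected arc $S^1\setminus\{1\}$, hence constant there. Evaluating at $\omega=-1$ using a genus-one Seifert matrix for $T_{n(x)}$ (for instance $V=\left(\begin{smallmatrix}1&0\\-1&n(x)\end{smallmatrix}\right)$) gives $\det(V+V^T)=4n(x)-1<0$, so $V+V^T$ is indefinite and $\sigma_{-1}(T_{n(x)})=0$. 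Thus the signature function vanishes identically, every real (Milnor) signature invariant is zero, and the algebraic concordance class of $T_{n(x)}$ lies in the torsion subgroup $\Z_2^\infty\oplus\Z_4^\infty$ of Levine's group. In particular $T_{n(x)}$ has finite algebraic order, and since $\Delta_{T_{n(x)}}$ is irreducible the knot is not algebraically slice, so the order is exactly two.

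With finite algebraic order, squarefree coprime Alexander polynomials, and $\rho^1(T_{n(x)})<0$ all in hand, Corollary~\ref{big corollary 1} applies verbatim and yields that $\{T_{n(x)}\}_{x\ge 2}$ is linearly independent in $\C$; listing $x=2,3,4,\dots$ produces $T_{-7},T_{-13},T_{-21},\dots$. The real obstacle in this line of argument is not the corollary but Theorem~\ref{rho twist}: obtaining the \emph{strict} sign $\rho^1(T_{n(x)})<0$ requires the bound of Theorem~\ref{premain} together with an explicit evaluation of $\rho^0$ of a metabolizing link as an integral over $\T^n$, and it is the sign, not merely the nonvanishing, of that integral that must be controlled. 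Within the corollary itself the only delicate point is the arithmetic above, and the quadratic $n(x)=-x^2-x-1$ is engineered precisely so that the discriminant $(2x+1)^2+4$ is at once positive (forcing the signatures, and hence $\rho^0$, to vanish) and a non-square (forcing $\Delta$ to be prime and the knot non-slice).
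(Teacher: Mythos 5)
Your proposal is correct and follows the same route as the paper: both reduce the corollary to Corollary~\ref{big corollary 1} by combining Theorem~\ref{rho twist} with the fact that the $T_{n(x)}$ have finite algebraic order and distinct prime Alexander polynomials. The only difference is cosmetic — the paper asserts the algebraic facts (citing Levine for the order-two claim) where you verify them directly via the discriminant $(2x+1)^2+4$ and the vanishing of the Levine--Tristram signatures — and your arithmetic checks out.
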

\begin{proof}[Proof of Corollary~\ref{twist theorem}]
These twist knots have nonzero $\rho^1$-invariants and distinct prime Alexander polynomials $\Delta_{T_{n}}(t)=nt^2+(1-2n)t+n.$ By Corollary~\ref{big corollary 1} these knots form a linearly independent set.  
%
%The $n$-twisted doubles of $K$ have the same Alexander polynomials as the $n$-twist knots and by Theorem~\ref{homomorphism} part 2 they also have nonzero $\rho^1$-invariants.   The same application of Corollary~\ref{big corollary 1} completes the proof.

\end{proof}

We move on to the proof of Theorem~\ref{rho twist}, which occupies us for the remainder of this paper.

For each $x$, the knot $T_{n(x)}$ is algebraically of order two (see \cite[Corollary 23]{Le10}).  The knot $T_{n(x)}\#T_{n(x)}$ has the following as its Seifert matrix taken with respect to the obvious basis for the first homology of the Seifert surface depicted in Figure~\ref{fig:metabaslink}.
\begin{equation}\label{seif mat}
\left[
\begin{array}{cccc}
n(x)&1&0&0\\
0&1&0&0\\
0&0&n(x)&1\\
0&0&0&1
\end{array}
\right].
\end{equation}
This matrix has a metabolizer generated by 
\begin{equation}
v_1 = \left[
\begin{array}{c}
1\\
x\\
0\\
1
\end{array}
\right], 
v_2=\left[
\begin{array}{c}
0\\
1\\
1\\
-x-1
\end{array}
\right].
\end{equation}

\begin{figure}[b]
\setlength{\unitlength}{1pt}
\begin{picture}(200,90)
%\put(200,90){@}
\put(0,-30){\includegraphics[width=.55\textwidth]{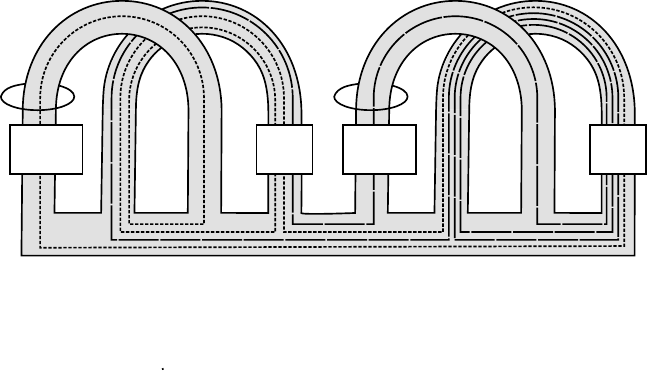}}
\put(80,34){$+1$}
\put(182,34){$+1$}
\put(4,35){$n(x)$}
\put(106,34){$n(x)$}
\put(-5,62){$m_1$}
\put(96,62){$m_2$}
\put(50,90){$x+1$ strands}
\put(160,90){$x+2$ strands}
\end{picture}
\caption{$L_x$ as a link in $S^3$ sitting on a Seifert surface for $T_{n(x)}\#T_{n(x)}$.  (In this picture $x=2$.)}\label{fig:metabaslink}
\end{figure}

As homology classes, $v_1$ and $v_2$  are represented by the link $L_x$ also depicted in Figure~\ref{fig:metabaslink}.  Meridians for the bands on which the components of $L_x$ sit (also depicted in Figure~\ref{fig:metabaslink}) represent generators for the Alexander module: 
\begin{equation}
m_1=\left[\begin{array}{c}
1\\0
\end{array}\right],
m_2=\left[\begin{array}{c}
0\\1
\end{array}\right]
\end{equation}
while the components of the link are given by:
\begin{equation}
l_1=\left[\begin{array}{c}
t(n(x)+xn(x)+x)+1\\t(n(x)+1)-1
\end{array}\right],
l_2=\left[\begin{array}{c}
t(n(x)+1)-1\\t(-xn(x)-n(x)-1)+x+1
\end{array}\right].
\end{equation}
%$$A_0(T_{n(x)}\#T_{n(x)}) = \dfrac{\Q[t^{\pm1}]}{(n(x)t^2+(1-2n(x))t+n(x))} \oplus \dfrac{\Q[t^{\pm1}]}{(n(x)t^2+(1-2n(x))t+n(x))}$$
In \begin{equation}A_0(T_{n(x)}\#T_{n(x)}) = \underset{2}{\oplus}\left(\dfrac{\Q[t^{\pm1}]}{(n(x)t^2+(1-2n(x))t+n(x))}\right)\end{equation}  the elements $m_1, m_2, l_1, l_2$ form a  $\Z$-linearly independent set.
%Notice that these knots are algebraically of order 2 and $K_n\#K_n$ has a metabolizer generated by the link $L_x$  depicted in Figure~\ref{fig:metabaslink}.   

%\begin{figure}[h]
%\setlength{\unitlength}{1pt}
%\begin{picture}(180,140)
%\put(-90,0){\includegraphics[width=1\textwidth]{metabaslinksurgdesc.pdf}}
%\put(20,55){$-1$}
%\put(162,55){$-1$}
%\end{picture}
%\caption{$L_x$ as the result of $-1$ surgery on the unlink along a pair of curves. ($x=2$).}\label{fig:metabaslinksurgdesc}
%\end{figure}

Theorem~\ref{premain} provides us with a strategy to prove Theorem~\ref{rho twist}.  Specifically, if $\rho^0(L_x) < -1$ then $\rho^1(T_n)<0$.  

%The remainder of the paper is devoted to the proof of Theorem~\ref{twist2} below.

\begin{theorem}\label{twist2}
When $x$ is an integer greater than $1$, $\rho^0(L_x)<-1$.
\end{theorem}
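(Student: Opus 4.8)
Theorem~\ref{twist2} asserts that the link invariant $\rho^0(L_x)$ is strictly less than $-1$ for every integer $x>1$. Since $L_x$ is a $2$-component link with pairwise zero linking numbers, $\rho^0(L_x) = \rho(M(L_x),\phi^0)$ where $\phi^0$ abelianizes $\pi_1(M(L_x))$ onto $\Z^2$. The natural strategy is to realize $\rho^0(L_x)$ as an integral over the $2$-torus $\T^2$ of a signature function built from a presentation of $L_x$, exactly as $\rho^0$ of a single knot is the integral of the Levine--Tristram signature over the circle. First I would obtain an explicit C-complex or Seifert-type generalized Seifert matrix for $L_x$ from Figure~\ref{fig:metabaslink}, reading off the $(x+1)$-strand and $(x+2)$-strand boxes together with the $n(x)$ and $+1$ clasps. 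From such data one writes down matrices $A(\omega_1,\omega_2)$ depending on the two torus variables $\omega_i = e^{2\pi i\theta_i}$, and then
\begin{equation*}
\rho^0(L_x) = \int_{\T^2} \sigma_{L_x}(\omega_1,\omega_2)\, d\omega_1\, d\omega_2,
\end{equation*}
where $\sigma_{L_x}$ is the signature of the relevant Hermitian matrix. The task reduces to bounding this integral.

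\textbf{Key steps, in order.} After setting up the signature integrand, I would exploit the highly structured, almost-block form of $L_x$: the two bands carry $n(x) = -x^2-x-1$ full twists and the connecting clasps contribute $+1$, so the matrix $A(\omega_1,\omega_2)$ should decompose into a large diagonal or near-diagonal piece coming from the parallel strands in each box plus a small correction coming from the clasps linking the two components. The goal is to show the signature is negative on a set of measure large enough to force the average below $-1$. Concretely, I expect the integrand to take a value close to $-(\text{size of the large block})$ on a region of $\T^2$ of positive measure bounded away from zero, while being controlled on the complement, and that as $x$ grows the negative contribution dominates. A clean route is to compare $L_x$ with a model link whose signature function is computable in closed form, or to isolate the single $2\times 2$ (or small) Hermitian block that governs the sign and compute its eigenvalues directly as functions of $\omega_1,\omega_2$; integrating the resulting step function over $\T^2$ then yields a rational (or arctangent-type) expression that I would estimate from below by $-1$ for $x\ge 2$ and check that the inequality is strict.

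\textbf{The main obstacle.} The hard part will be making the signature computation honest rather than heuristic: the generalized (C-complex) signature of a $2$-component link is genuinely a function of two variables, and its jump loci are a real-algebraic curve in $\T^2$ whose geometry depends on $x$. Establishing that $\int_{\T^2}\sigma_{L_x} < -1$ uniformly for $x\ge 2$ requires either an exact evaluation (possible only if the matrix factors nicely) or a robust lower bound on the measure of the region where $\sigma_{L_x}$ is very negative together with an upper bound on where it is positive. I would expect the cleanest argument to come from recognizing that the $n(x)$-twisted bands contribute a signature that, after integration over one torus variable, reduces to an integral of a Levine--Tristram-type function whose sign is controlled by the sign of $n(x)<0$; the remaining integration over the second variable then only needs a crude estimate. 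The delicate point is controlling the contribution of the clasps so that it cannot cancel the large negative bulk term, and verifying that the strict inequality $<-1$ (not merely $\le -1$) survives at the boundary case $x=2$, which I would confirm by a direct numerical-but-rigorous evaluation of the integrand at $x=2$ and a monotonicity argument in $x$ thereafter.
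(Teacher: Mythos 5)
Your proposal is a plan, not a proof: the decisive quantitative step --- showing that the integral of the two-variable signature function over $\T^2$ is strictly less than $-1$, uniformly for all $x\ge 2$ --- is never carried out. Every load-bearing claim is phrased as an expectation (``I expect the integrand to take a value close to...'', ``presumably these bounds will imply...''-style reasoning), and you yourself identify in the final paragraph that the honest estimate is the hard part. There are two concrete gaps. First, the identification of $\rho^0(L_x)$ with the integral of a C-complex (Cimasoni--Florens-type) signature over $\T^2$ is itself a nontrivial fact requiring justification for links, not a formal analogue of the knot case; you would need to prove or cite it, and then extract an explicit Hermitian matrix from Figure~\ref{fig:metabaslink}, which you do not do. Second, even granting that reduction, you have no mechanism for the uniformity in $x$: the jump loci of the signature function depend on $x$, and the ``monotonicity argument in $x$'' you invoke at the end is precisely the missing ingredient, not a routine afterthought.

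The paper avoids all of this with a soft argument that you should compare against. Lemma~\ref{surgery} shows that $+1$ surgery along a nullhomologous curve can only decrease $\rho^0$; since $L_x$ is obtained from $L_{x-1}$ by such surgeries (Figure~\ref{fig:L3fromL2}), one gets $\rho^0(L_x)\le\rho^0(L_2)$ for free, which is exactly the uniform-in-$x$ statement your approach struggles with. A further application reduces to a single auxiliary link $L'$ obtained from the unlink by only two surgeries, and $\rho^0(L')$ is then computed \emph{exactly}: an explicit $4$-manifold $W$ with $\sigma(W)=2$ is built, a handle slide reduces the twisted intersection form to the $1\times 1$ matrix $[t+t^{-1}]$, whose $L^2$ signature vanishes by symmetry of $\real(z)$ on the circle, giving $\rho^0(L')=-2$. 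No multivariable signature integral is ever estimated. If you want to salvage your route, you would need to actually produce the matrix $A(\omega_1,\omega_2)$, prove the integral formula for $\rho^0$ of a two-component link with vanishing linking number, and establish a lower bound on the measure of the region where the signature is at most $-2$; as written, none of these steps exists.
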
 

\begin{proof}

An important workhorse in the proof of this theorem is Lemma~\ref{surgery}.  It gives bounds on how surgery along a nullhomologous curve changes $\rho^0$.  We use this theorem to reduce the computation of $~\rho^0(L_x)$ for every $x$ to a single computation for a simpler link.  The proof of the lemma is easy and we leave it to the end.

\begin{lemma}\label{surgery}
If a link $L'$ is given by performing $+1$ surgery on $L$ along a nullhomologous curve in the complement of $L$ then $\rho^0(L')\le\rho^0(L)$.
\end{lemma}

%\textcolor{red}{A better ordering:  replace with commutator surgery for $L_2$.  Can drop some, reduce to what is now $L_2'$.  Do computation here.  Realize next link from previous links.  proof is done.  Remark, by using a in plmenetation on a computer the whose isect'n form is ..... numerically integrating ...... We do  not have any error bounds on this integration in general.}

As is depicted in Figure~\ref{fig:L3fromL2}, the link $L_x$ can be realized as $+1$ surgery along nullhomologous curves on $L_{x-1}$.  Proposition~\ref{surgery} then implies that for $x>2$ \begin{equation}\label{Lx < L2}\rho^0(L_x)\le\rho^0(L_{x-1})\le\dots \le \rho^0(L_2).\end{equation}  

\begin{figure}[h]
\setlength{\unitlength}{1pt}
\begin{picture}(210,100)
%\put(210,100){@}
\put(00,0){\includegraphics[width=.6\textwidth]{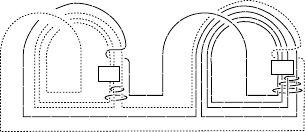}}
\put(70,38){{$+1$}}
\put(194,43){$+1$}
\end{picture}
\caption{If one performs $+1$ surgery on the depicted nullhomologous curves, then the link depicted is $L_{x+1}$.  If the surgery curves are erased, the link is $L_x$ ($x=2$).}\label{fig:L3fromL2}
\end{figure}

The link $L_2$ is realized in Figure~\ref{fig:L2} as $+1$ surgery along the unlink along four nullhomologous curves.  Consider the link $L'$ depicted in Figure~\ref{fig:L'} obtained by performing only two of these four surgeries.  By Lemma~\ref{surgery},  \begin{equation}\label{L2 < L'}\rho^0(L_2)\le\rho^0(L').\end{equation}

\begin{figure}[ht]
\setlength{\unitlength}{1pt}
\begin{picture}(210,100)
%\put(220,100){@}
\put(0,0){\includegraphics[width=.6\textwidth]{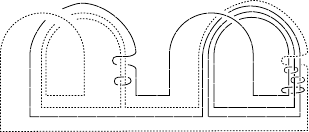}}
\end{picture}
\caption{The link $L_2$ as the result of $+1$ surgery on the unlink along nullhomologous curves.}\label{fig:L2}
%\end{figure}
%
%\begin{figure}[h]
\setlength{\unitlength}{1pt}
\begin{picture}(210,100)
\put(00,0){\includegraphics[width=.6\textwidth]{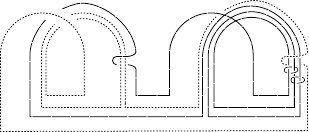}}
\put(220,35){$\gamma_1$}
\put(185,46){$\gamma_2$}
\end{picture}
\caption{the link $L'$ as the result of $+1$ surgery on the unlink along fewer curves than $L_2$.}\label{fig:L'}
\end{figure}

\begin{figure}[h]
\setlength{\unitlength}{1pt}
\begin{picture}(210,100)
\put(0,0){\includegraphics[width=.6\textwidth]{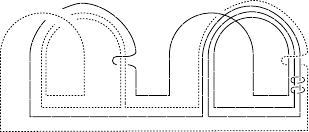}}
\put(223,42){$\gamma_1$}
\put(223,25){$\gamma_2$}
\end{picture}
\caption{the link $L'$ after an isotopy}\label{fig:alsoL'}
\end{figure}

%\clearpage

The diagram for $L'$ in Figure~\ref{fig:L'} is of $+1$  surgery on nullhomologous curves $\gamma_1$, $\gamma_2$ on the unlink.  Using this fact we build a 4-manifold with $M(L')$ as its boundary.  Start with $V$, the boundary connected sum of two copies of $S^1\times B^3$.  The boundary of $V$ is zero surgery on the two component unlink.  Thinking of $\gamma_1$ and $\gamma_2$ as sitting on $\bdry V$, attach two-handles to their $+1$ framings and call this manifold be $W$.  the inclusion induced map from $H_1(M(L'))$ to $H_1(W)$ is a monomorphism so \begin{equation}\label{W computes L'}\rho^0(L')=\sigma^{(2)}(W,\phi)-\sigma(W),\end{equation} where $\phi$ is the abelianization map on $H_1(W)$.

%\begin{figure}[h]
%\setlength{\unitlength}{1pt}
%\begin{picture}(180,140)
%\put(-90,0){\includegraphics[width=1\textwidth]{alsoL_.pdf}}
%\end{picture}
%\caption{the link $L'$ after an isotopy}\label{fig:H-slideL'}
%\end{figure}

The untwisted intersection matrix of $W$ is given by the $2\times2$ identity matrix so $\sigma(W)=2$.  

\begin{figure}[h]
\setlength{\unitlength}{1pt}
\begin{picture}(180,70)
\put(0,-20){\includegraphics[width=.3\textwidth]{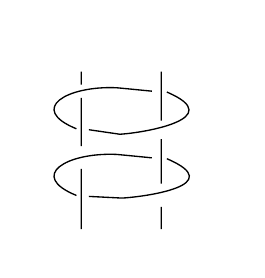}}
\put(100,-30){\includegraphics[width=.3\textwidth]{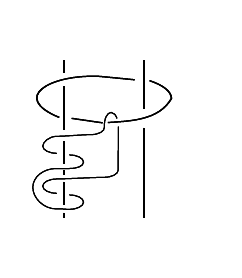}}
\put(94,35){$\cong$}
\put(7,15){$+1$}
\put(7,55){$+1$}
\put(80,55){$\gamma_1$}
\put(80,15){$\gamma_2$}

\put(105,60){$+1$}
\put(104,23){$+2$}
\put(180,60){$\gamma_1$}
\put(140,5){$\gamma_2'$}
\end{picture}
\caption{Right: A closer view of the surgery curves in the diagram for $L'$ in Figure~\ref{fig:alsoL'}. Left: A diagram for $L'$ gotten by performing a handle slide.}\label{fig:alsoL'zoom}
\end{figure}

Consider the result of performing the handle slide depicted in Figure~\ref{fig:alsoL'zoom} on the diagram in Figure~\ref{fig:alsoL'} (an isotopy of the diagram in Figure~\ref{fig:L'}).  The resulting diagram for $W$ is of a 4-manifold gotten from $V$ by first gluing a 2-handle to the $+2$ framing of a null-homotopic curve $(\gamma_2')$ and then another to a curve which is non-torsion in $H_1(V;\Q[\Z^2])$ $(\gamma_1)$. The second of these additions affects neither twisted second homology nor the twisted intersection form.  Let $W'$ be the 4-manifold gotten by gluing a two handle to $V$ along the $+2$ framing of $\gamma'_2$.  By the above, \begin{equation}\label{W equals W'}\sigma^{(2)}(W,\phi)=\sigma^{(2)}(W',\phi).\end{equation}  A Kirby diagram for $W'$ is given in Figure~\ref{fig:W'}.  A convenient istopy of the diagram is given in Figure~\ref{fig:W'isotope}.

\begin{figure}[h]
\setlength{\unitlength}{1pt}
\begin{picture}(160,70)
%\put(160,70){@}
\put(0,-50){\includegraphics[width=.6\textwidth]{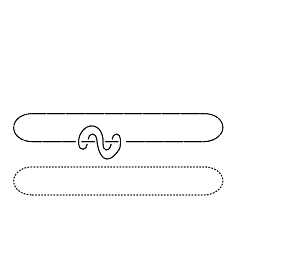}}
\put(45,55){$+2$}
\put(75,56){$\gamma_2'$}
\put(55,6){$\bullet$}
\put(55,63){$\bullet$}
%\put(-20,-50){$\dot$}
\end{picture}
\caption{A Kirby diagram for $W'$.}\label{fig:W'}
%\end{figure}
%
%\begin{figure}[b]
\setlength{\unitlength}{1pt}
\begin{picture}(150,90)
%\put(150,90){@}
\put(00,7){\includegraphics[width=.4\textwidth]{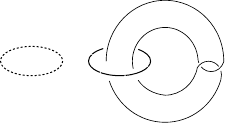}}
\put(133,80){$+2$}
\put(127,55){$\gamma_2'$}
\put(17,35){$\bullet$}
\put(75,35.5){$\bullet$}
%\put(-20,-50){$\dot$}
\end{picture}
\caption{The diagram in Figure~\ref{fig:W'} after an isotopy.  It is convenient that the $+2$ pushoff of $\gamma_2'$ is the blackboard pushoff in this diagram.}\label{fig:W'isotope}
\end{figure}

%\begin{figure}[h]
%\setlength{\unitlength}{1pt}
%\begin{picture}(180,100)
%\put(-20,-50){\includegraphics[width=.7\textwidth]{link_B_.pdf}}
%\put(25,50){$+2$}
%\put(80,50){$\gamma_2'$}
%\put(25,15){$\bullet$}
%\put(25,83){$\bullet$}
%%\put(-20,-50){$\dot$}
%\end{picture}
%\caption{A Kirby diagram for $W'$.}\label{fig:W'}
%\end{figure}
%
%\begin{figure}[h]
%\setlength{\unitlength}{1pt}
%\begin{picture}(180,140)
%\put(-20,-10){\includegraphics[width=.7\textwidth]{link_B_isotope.pdf}}
%\put(150,120){$+2$}
%\put(120,105){$\gamma_2'$}
%\put(5,53){$\bullet$}
%\put(85,53){$\bullet$}
%%\put(-20,-50){$\dot$}
%\end{picture}
%\caption{The diagram in Figure~\ref{fig:W'} after an isotopy.  It is convenient that the $+2$ pushoff of $\gamma_2'$ is the blackboard pushoff in this diagram.}\label{fig:W'isotope}
%\end{figure}

Notice that while $\gamma_2'$ does not bound an embedded disk in $\bdry V$, its lift to the abelian cover of $\bdry V$ does.  This disk $D$ is depicted in Figure~\ref{fig:isect'n}.  The twisted second homology of $W'$ is $\Q[t^{\pm1},s^{\pm1}]$ generated by the 2-sphere $S$ given by the core of the 2-handle glued to $\gamma_2'$ together with $D$.  The twisted intersection matrix of $W$ is given by the equivariant self intersection of $S$.  The sphere $S$ has a pushoff which crosses the $S$ exactly where $D$ intersects the lifts of the $+2$ pushoff of $\gamma_2'$.  Thus, the self intersection matrix of $S$ is given by counting the number of times (with coefficients) that $D$ intersects $\gamma_2'$.  These intersection points are depicted in Figure~\ref{fig:isect'n}.

\begin{figure}[t]
\setlength{\unitlength}{1pt}
\begin{picture}(340,160)
%\put(340,160){@}
%\put(-90,45){\includegraphics[width=.9\textwidth]{page1andrv2.pdf}}
%\put(-36,123){$+2$}
\put(0,45){\includegraphics[width=.9\textwidth]{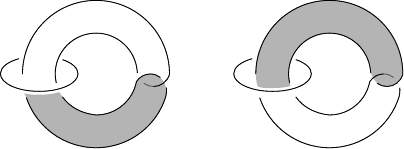}}
\put(115,50){$\gamma'_2$}
\put(130,145){$t \gamma'_2$ }
\put(310,50){$t^{-1}\gamma'_2$}
\put(320,155){$\gamma'_2$}
\put(12,27){The portion of $D$ sitting in}
\put(12,14){sheet 1 of $\widetilde{V}$ contributes $+t$ }
\put(12,1){to the self intersection of $S$.}
\put(202,27){The portion of $D$ sitting in}
\put(202,14){sheet $t$ of $\widetilde{V}$ contributes $+t^{-1}$ }
\put(202,1){to the self intersection of $S$.}
\end{picture}
\caption{The lifts of the $+2$ push-off of $\gamma_2'$ in the cover of ${V}$ together with a  disk, $D$, bounded by a lift of $\gamma_2'$.}\label{fig:isect'n}
\end{figure}

By counting these intersection points, the twisted signature of $W'$ is equal to the $L^2$ signature of the $1\times 1$ matrix, $[t+t^{-1}]$.  A Fourier transform (see \cite[example 1.15]{L2algebraic}) send this matrix to the $1\times1$ matrix $[z+z^{-1}]$ over $L^2(S^1)$ ($S^1$ is the unit circle in $\mathbb C$ with normalized Lebesgue measure).  The signature of this matrix is equal to the measure of the subset of $S^1$ on which \begin{equation}z+z^{-1} = z+\overline z = 2\real(z)\end{equation} is positive minus the measure of the subset on which it is negative.  These sets have equal measure and so \begin{equation}\label{signature of W'}\sigma^2(W';\phi)=0\end{equation}

Combining equations (\ref{Lx < L2}), (\ref{L2 < L'}), (\ref{W computes L'}), (\ref{W equals W'}) and (\ref{signature of W'}) for each $x\ge2$ 
\begin{equation}
\begin{array}{rl}
\rho^0(L_x)&\le\rho^0(L_2)\le\rho^0(L')=\sigma^{(2)}(W)-\sigma(W)\\&=\sigma^{(2)}(W')-\sigma(W)=0-2<-1
\end{array}
\end{equation} and the proof is complete.

\end{proof}
%\begin{remark}
%The computation of $\rho^0$ for links arising by commutator surgery from the unlink can be algorithmized and implemented on a computer. Doing so we get that the $\rho^0$-invariant of $L_2$ is close to $-3.62$.  The implementaion involves approximating the integral of the sign of some function.  We have no bounds on the error in our approximation and without them this at best suggests that $\rho^0(L_x)$ is significantly less than the proven bound of $-2$.
%\end{remark}

\subsection{proof of Lemma~\ref{surgery}}

We now state and prove a marginally stronger version of Lemma~\ref{surgery}.  As promised it is not a difficult theorem to prove.
\begin{lemma}\label{surgery stronger}
\begin{enumerate}
\item
If a link $L'$ is given by performing $+1$ surgery on $L$ along a nullhomologous curve $\gamma$ then $\rho^0(L')\le\rho^0(L)\le \rho^0(L')+2$.
\item
If $L'$ is given by performing $-1$ surgery on $L$ along a nullhomologous curve then $\rho^0(L')-2\le\rho^0(L)\le \rho^0(L')$.
\end{enumerate}
\end{lemma}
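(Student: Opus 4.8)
The plan is to construct, for each claim, an explicit $4$-manifold $X$ whose boundary is $M(L')\sqcup -M(L)$ and to which Definition~\ref{rho} applies, thereby reducing the inequality to a signature computation. Consider the first claim. Performing $+1$ surgery on $L$ along the nullhomologous curve $\gamma$ can be realized at the $4$-manifold level: I would take the cylinder $M(L)\times[0,1]$ and attach a single $2$-handle along $\gamma\subset M(L)\times\{1\}$ with framing $+1$. Call the resulting cobordism $X$. Its upper boundary is $M(L')$ and its lower boundary is $-M(L)$, so $\bdry X = M(L')\sqcup -M(L)$.

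The next step is to verify the hypotheses of Definition~\ref{rho} with $\Gamma_i$ the abelianizations $\Z^n$ used to define $\rho^0$. Because $\gamma$ is nullhomologous, attaching the $2$-handle does not change $H_1$, so the inclusion of each boundary component induces an isomorphism on $H_1$; thus the abelianization coefficient systems on $M(L)$ and $M(L')$ both factor through $\psi:\pi_1(X)\to\Z^n=H_1(X)$, and the required commuting squares with monomorphisms $\alpha_i$ exist. Definition~\ref{rho} then gives
\begin{equation}
\rho^0(L') - \rho^0(L) = \sigma^{(2)}(X,\psi) - \sigma(X).
\end{equation}
Since $X$ is built by attaching one $2$-handle to a nullhomologous curve with framing $+1$, its second homology relative to the bottom is generated by a single class whose self-intersection is $+1$, so $\sigma(X)=+1$ (I would double-check the orientation convention so the sign comes out right). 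The ordinary inequality~\eqref{inequality} bounds $|\sigma^{(2)}(X,\psi)|$ by the $\Q[\Z^n]$-rank of $H_2(X;\Q[\Z^n])/i_*H_2(\bdry X;\Q[\Z^n])$, and because we have added exactly one $2$-handle this rank is at most $1$. Hence $|\sigma^{(2)}(X,\psi)|\le 1$, giving $-1\le \rho^0(L')-\rho^0(L)\le 1$, which after rearranging with $\sigma(X)=1$ yields $\rho^0(L')\le\rho^0(L)\le\rho^0(L')+2$.

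The second claim is entirely parallel: attach a $2$-handle along $\gamma$ with framing $-1$ instead, so that $\sigma(X)=-1$ while the twisted signature still satisfies $|\sigma^{(2)}(X,\psi)|\le 1$; the same computation then gives $\rho^0(L')-2\le\rho^0(L)\le\rho^0(L')$. The main obstacle I anticipate is the careful bookkeeping of orientations and signs. One must confirm that the framing coefficient $\pm1$ really produces the ordinary signature $\pm1$ with the boundary oriented as $M(L')\sqcup -M(L)$, and that the rank bound of $1$ on the twisted homology quotient is tight enough; the inequality $|\sigma^{(2)}|\le 1$ together with the integer $\sigma(X)=\pm1$ is what forces the two-sided bound rather than a weaker one. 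Establishing that $H_2(X;\Q[\Z^n])$ modulo the boundary image has rank at most one amounts to observing that a single $2$-handle attachment can increase twisted second homology by at most one generator, which follows from the handle decomposition; the possibility that the attaching curve becomes torsion or trivial in the twisted homology only lowers the rank and hence only sharpens the bound.
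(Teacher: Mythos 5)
Your proposal is correct and follows essentially the same route as the paper: the paper also forms $W$ by attaching a single $\pm1$-framed $2$-handle to $M(L)\times[0,1]$ along $\gamma$, notes that both boundary inclusions are $H_1$-isomorphisms so Definition~\ref{rho} applies to the abelianization, computes $\sigma(W)=\pm1$ from the $1\times1$ intersection matrix, and bounds $|\sigma^{(2)}(W,\phi)|\le 1$ by the single-$2$-handle rank bound from inequality~\eqref{inequality}. No substantive differences.
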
\begin{proof}
Let $W$ be the 4-manifold obtained by adding to $M(L)\times[0,1]$ a 2-handle along the $+1$ framing of $\gamma$ in $M(L)\times\{1\}$.  $\bdry(W)$ is given by $-M(L)\sqcup M(L')$ and both inclusions induce first homology isomorphisms.  The intersection form on $H_2(W)/H_2(\bdry W)$ is given by the $1\times 1$ matrix whose only entry is $1$ so $W$ has regular signature $\sigma(W)=1$.  

Thus, \begin{equation}\label{surgery equation}\rho^0(L')-\rho^0(L) = \sigma^{(2)}(W,\phi)-1,\end{equation} where $\phi:\pi_1(W)\to H_1(W)$ is the abelianization map.  $(W,\bdry_-W)$ has only one 2-handle so \begin{equation}|\sigma^{(2)}(W,\phi)|\le 1.\end{equation}  Rearranging~(\ref{surgery equation}) and applying this bound \begin{equation}\left|\rho^0(L')-\rho^0(L)+1\right|\le 1\end{equation} and $\rho^0(L')\le\rho^0(L)\le \rho^0(L')+2$, completing the proof of the first claim

The proof of the second claim is identical except that the signature of the bounded 4-manifold is $-1$.
\end{proof}

We close with some related questions which we would like to address.

\begin{question}
 What about the case $x=1$?  The $-3$ twist knot is of infinite order in the concordance group (proven by \cite[Corollary 1.2]{Tamulis} using Casson Gordon invariants).  If one follows the technique in this paper in the case $x=1$, then one finds that $\rho^0(L_1)=-1$, so $\rho^1(T_{-3})\in[-1,0]$.  Is there another choice of metabolizing link which has $\rho^0<-1$?  Is it the case that  $\rho^1(T_{-3})= 0$?
\end{question}

\begin{question}

 There are many more twist knots of algebraic order 2 whose concordance orders are unknown.  If one can find a derivative link for the connected sum of each such twist knot with itself and can compute the associated $\rho^0$-invariant, then one will have bounds on the $\rho^1$-invariant of the twist knots.  Presumably these bounds will imply that most of the twist knots have non-vanishing $\rho^1$-invariant.  If one can do this then one will have shown that most of the twist knots form a  linearly independent subset of the concordance group.
 \end{question}

%%%%%%%%%%%%%%%%%%%%   End of main body of article
%
%                             References
%
%   BiBTeX users uncomment the following line:
%
\bibliographystyle{plain}

\bibliography{biblio}

%\begin{thebibliography}
%
%\end{thebibliography}

\end{document}